\newtheorem{thm}{Theorem}[section]
\newtheorem{lem}[thm]{Lemma}
\newtheorem{cor}[thm]{Corollary}
\newtheorem{prop}[thm]{Proposition}
\newtheorem{claim}[thm]{Claim}
\newtheorem{defn}[thm]{Definition}
\newtheorem{rem}[thm]{Remark}
\newtheorem{ex}[thm]{Example}
\def\C{{\mathbb C}}
\def\F{{\mathbb F}}
\def\G{{\mathbb G}}
\def\O{{\mathbb O}}
\def\P{{\mathbb P}}
\def\Q{{\mathbb Q}}
\def\R{{\mathbb R}}
\def\cF{{\mathcal F}}
\def\cL{{\mathcal L}}
\def\cM{{\mathcal M}}
\def\cO{{\mathcal{O}}}
\def\cQ{{\mathcal{Q}}}
\def\cS{{\mathcal S}}
\def\cU{{\mathcal U}}
\def\operatorname#1{\mathop{\rm #1}\nolimits}
\def\Loc{\operatorname{Locus}}
\def\Proj{\operatorname{Proj}}
\def\Pic{\operatorname{Pic}}
\def\rk{\operatorname{rk}}
\def\deg{\operatorname{deg}}
\def\det{\operatorname{det}}
\def\ratcurves{\operatorname{RatCurves}}
\def\Sl{\operatorname{Sl}}
\def\SO{\operatorname{SO}}
\def\Sp{\operatorname{Sp}}
\def\NE{\operatorname{NE}}
\begin{document}

\title[Uniform vector bundles on Fano manifolds]{Uniform vector
bundles on Fano manifolds and applications}

%    Information for first author
\author{Roberto Mu\~noz}
%    Address of record for the research reported here
\address{Departamento de Matem\'atica Aplicada, ESCET, Universidad Rey
  Juan Carlos, 28933-M\'ostoles, Madrid, Spain}
%    Current address
%\curraddr{Department of Mathematics and Statistics,
%Case Western Reserve University, Cleveland, Ohio 43403}
\email{roberto.munoz@urjc.es}
%    \thanks will become a 1st page footnote.
\thanks{First and third authors partially supported by the Spanish government
project
  MTM2006-04785. Second author partially supported by  MIUR (PRIN project:
Propriet\`a
geometriche delle variet\`a reali e complesse)}

%    Information for second author
\author{Gianluca Occhetta} \address{Dipartimento di Matematica,
  Universit\`a di Trento, Via Sommarive 14,
I-38123, Povo (Trento), Italy}
\email{gianluca.occhetta@unitn.it}
%\thanks{Partially supported by }

%    Information for third author
\author{Luis E. Sol\'a Conde} \address{Departamento de Matem\'atica
  Aplicada, ESCET, Universidad Rey Juan Carlos, 28933-M\'ostoles,
  Madrid, Spain} \email{luis.sola@urjc.es}
%\thanks{Partially supported by the Spanish government project MTM2006-04785.}

%    General info
\subjclass{Primary 14M15; Secondary 14E30, 14J45}
%\date{}

\keywords{uniform vector bundles, Fano varieties, Fano bundles}

\begin{abstract}
In this paper we give a splitting criterion for uniform vector bundles on Fano
manifolds covered by lines. As a consequence, we classify low rank uniform
vector bundles on Hermitian symmetric spaces and Fano bundles of rank
two on Grassmannians.
\end{abstract}

\maketitle
\tableofcontents

\section{Introduction}\label{sec:intro}

It is classically known that every vector bundle on the projective line splits 
as a sum of line bundles, and hence it is determined, up to 
isomorphism, by a list of integers, usually called its {\it splitting type}. 
It is then natural to consider restrictions to lines in order to study vector bundles on projective spaces. Although the situation for spaces of 
dimension bigger than or equal to two is much more involved, one may obtain 
partial classification results after restricting to certain classes of vector 
bundles. One of the classes that has been studied more extensively is that 
of {\it uniform} vector bundles, namely those in which the splitting type 
is independent of the chosen line.

Starting from \cite{V}, classification of uniform vector bundles on $\P^n$ has
been developed in a series of papers showing first that if the rank is smaller
than $n$, then the vector bundle splits (cf. \cite{V} for rank two and \cite{S} or
\cite[Thm.~3.2.3]{OSS} for any rank), and characterizing the cases of rank $n$
(see \cite{EHS}) and $n+1$ (see \cite{E} and \cite{B2}). In these {\it low
rank} cases the only uniform vector bundles are constructed upon $T_{\P^n}$ and line bundles by basic operations (tensor products, symmetric or skew-symmetric powers and direct sums). 

Similar results have been obtained for uniform vector bundles on other
projective varieties swept out by lines (for which the notion of uniformity still makes sense) like quadrics (see \cite[Section 4]{KS} and \cite{B1}) and  Grassmannians (see
\cite{G}). Given one of these varieties, one would like to compute first its {\it splitting threshold}, that is the maximum  positive integer $u$ such that any uniform vector bundle of
rank smaller than or equal to $u$ splits as a sum of line bundles, and then classify the low rank cases. For instance, on Grassmannians the only unsplit
uniform vector bundles of rank $u+1$ are twists of the universal
quotient bundle or its dual. These classical results are based on the so called
{\it standard construction} which uses the universal family of lines in $X$ to
produce a uniform vector subbundle of the vector bundle $E$ in such a way that
an induction procedure on the rank works.

More generally, one may consider the problem of classifying uniform vector
bundles on smooth complex projective varieties $X$ dominated by an unsplit
family $\cM$ of rational curves. In the framework of the general theory of rational curves (cf. \cite{K})
the standard construction has a clear geometrical interpretation -- see the proof
of Theorem \ref{thm:criterion} -- allowing us to deal with other uniruled varieties.
In this way, we get a splitting criterion written in terms of the 
cohomology groups of the family $\cM_x$ of curves of $\cM$ by $x \in X$, see Theorem
\ref{thm:criterion}. Moreover, in certain cases, it can be expressed in terms of properties of
the variety of minimal rational tangents to $X$ (see Corollary
\ref{cor:uniformlinear}).

Our criteria may be applied to obtain bounds on the splitting threshold for uniform vector bundles on many Fano
manifolds of Picard number one. In this paper we have tested our techniques on irreducible Hermitian symmetric spaces (see Section
\ref{ssec:hermitian}): we have reproved the classical results for quadrics and
Grassmannians and got new ones for the rest. Furthermore we have proved that, as in the
case of Grassmannians, on the isotropic Grassmannian the universal quotient bundle is  -- up to a twist by a line bundle -- the only uniform unsplit vector bundle of lowest rank. 

Finally we show how to apply our results to the study of Fano bundles.
Recall that a vector bundle $E$ on $X$ is {\it Fano} if $\P(E)$ is a Fano manifold. It is known
that if $E$ is a Fano bundle on $X$ then $X$ is also Fano  \cite[Thm.~1.6]{SW}, and the
problem of classification of Fano bundles on a particular Fano manifold appears
naturally when classifying Fano manifolds. For instance, rank two Fano bundles on projective
spaces or quadrics are completely classified, see \cite{SW}, \cite{SSW} and
\cite{APW}. 

We have dealt here with the problem of low rank Fano bundles on Grassmannians
(see Theorem  \ref{thm:fanograss}), obtaining (see Corollary
\ref{cor:grass1}) that any unsplit Fano bundle of rank two on $\G(1,n)$, $n \geq
5$, is a twist of the universal quotient bundle.
The main idea in the proof is to consider the restriction of the bundle to the maximal
dimensional linear spaces of the Grassmannian and proving that they split as 
direct sums of line bundles. These restrictions are not necessarily Fano, but verify 
a weaker property, namely to be {\it $1$-Fano} (see Definition
\ref{def:rfano}). This property still allows us to use 
 techniques similar to those used in \cite{APW} and conclude that 
$1$-Fano bundles on $\P^n$, $n\geq 4$, are direct sums of line bundles 
(in fact a stronger result is true, see Corollary \ref{cor:splitcrit}). 
In particular Fano bundles on Grassmannians 
are uniform, and then the classification previously obtained finishes the work.

\section{Conventions and definitions.}\label{sec:conventions}
Along this paper we will work with smooth complex projective varieties. 
Given a vector bundle $E$ on a variety $X$, $\P(E)$ will denote the 
Grothendieck projectivization of $E$, i.e.
$$
\P(E)=\Proj\left(\bigoplus_{k\geq 0}S^kE\right).
$$

We will mostly concentrate on Fano manifolds of Picard number one. 
If this is the case, we will denote by $\cO_X(1)$, or by $\cO(1)$ 
if there is no possible confusion, the ample generator of the 
Picard group of $X$. Then, given an algebraic cycle $\alpha$
in $X$ of codimension $m$, we call {\it degree} of $\alpha$
the integer $\deg(\alpha)=\alpha\cdot c_1(\cO_X(1))^m$. A 
rational curve of degree one on a Fano manifold $X$ is 
called a {\it line}.
As usual, the degree of a vector bundle will be the degree 
of its first Chern
class. 

Given a smooth projective variety $X$, we will consider families 
of rational curves on $X$, that is, irreducible components of the 
scheme $\ratcurves^n(X)$ (see \cite[II.2]{K}). We say that a family 
$\cM$ is {\it unsplit} if $\cM$ is a proper $\C$-scheme. Given a 
vector bundle $E$ on $X$ and a family of rational curves $\cM$ 
on $X$, we say that $E$ is {\it
uniform} with respect to $\cM$,
if the restriction of $E$ to the normalization of every curve 
$\ell$ in $\cM$ is isomorphic to
$\cO(a_1)\oplus\dots\oplus \cO(a_r)$, with $a_1\geq\dots\geq\ a_r$ 
fixed. The $r$-tuple $(a_1,\dots,a_r)$ is called the {\it splitting type} 
of $E$ with respect to $\cM$.

Given a smooth variety $X$ and a family of rational curves parametrized 
by $\cM$ such that $\cM_x$ is proper for the
general point $x \in X$, the {\it variety of minimal rational tangents} (VMRT
for short) at $x$ is the closure of the set of points in $\P(\Omega_{X,x})$
corresponding to the tangent lines of the general curves of the family 
$\cM$ passing by $x$. We refer to \cite{Hw} for a complete account on
the VMRT. 

The Grassmannian variety
parametrizing linear projective subspaces of dimension $k$ in $\P^n$, for which we follow the conventions of \cite{A}, will be denoted by $G=\G(k,n)$. 
We will denote by $\cQ$ the rank $k+1$
{\it universal quotient bundle} and by $\mathcal{S}^\vee$ the rank
$n-k-1$ {\it universal subbundle}, related in the universal exact
sequence:
$$0 \to \mathcal{S}^\vee \to \cO^{\oplus (n+1)} \to \cQ \to 0.$$

The projectivization of $\cQ$ provides the universal family of $\P^k$'s
in $\P^n$:
$$\xymatrix{&\P(\cQ)\ar[ld]_{\pi_2}\ar[rd]^{\pi_1}&\\ \G(k,n)&&\P^n.} $$

The variety $G=\G(k,n)$ contains three distinguished families of linear spaces: the
family of lines parametrized by the flag manifold $\F(k-1,k+1,n)$, 
a family of $\P^{k+1}$'s (i.e. subschemes $\P^{k+1}\subset G$ of degree $1$) 
parametrized by a Grassmannian $\G(k+1,n)$ and a family of $\P^{n-k}$'s 
parametrized  by $\G(k-1,n)$. The last two types will be called {\it $\alpha$} and 
{$\beta$-spaces}. Note that in the case $k=n-k-1$, choosing which family is of $\alpha$ or
$\beta$-spaces is equivalent to choosing which universal bundle is the universal
quotient bundle.

The variety $G$ is covered by an unsplit family of lines, parametrized by a 
smooth rational homogeneous space that we
denote by $\F(k-1,k+1,n)$. It is the flag manifold parametrizing chains
$L_{k-1}\subset L_{k+1}\subset\P^n$ of subspaces of dimensions $k-1$ and $k+1$
in $\P^n$.

Given a rank $(n+1)$ matrix $A$, we may consider the subscheme 
$\G_A(k,n)\subset\G(k,n)$ parametrizing isotropic subspaces 
of dimension $k$ in $\P^n$. Later on we will consider the 
{\it isotropic} and {\it symplectic isotropic grassmannians}, 
$QG=\G_Q(m-1,2m-1)$ and 
$LG=\G_L(m-1,2m-1)$, where $Q$ is symmetric and $L$ is 
skew-symmetric, respectively.

We will denote by $\mathbb{Q}^n$ (or just $\mathbb{Q}$ when its dimension
is not relevant) the $n$-dimensional smooth quadric.

Finally, given any real number $a$, we will denote by $\lfloor a\rfloor$ 
(resp. by $\lceil a \rceil$) its round-down (resp. round-up). 

\section{Splitting criteria for vector bundles on Fano
manifolds}\label{sec:criterion}

In this section we present a splitting criterion for vector bundles of small
rank on varieties covered by lines. Its proof goes through the construction of a
quotient $F$ of the vector bundle $E$ which is trivial on lines. The classical
argumentation for a statement of this type relies on a descent lemma due to
Forster, Hirschowitz and Schneider (cf. \cite{FHS}), that reduces the existence
of $F$ to the vanishing of a certain group of cohomology. This was the point
of view of, for instance, Guyot's classification of uniform vector bundles on
grassmannians, \cite{G}. The use of the descent lemma may be substituted in some
cases by a more precise argument in which the concrete cycles that control the
descent, and not the whole group of cohomology is considered. This idea appears
already in the classification of uniform vector bundles on quadrics due to Kachi
and Sato (cf. \cite[Thm. 4.1]{KS}). In this paper we use
a geometric reformulation of this argument involving rational curves. We will also 
make use of the characterization of uniform vector bundles of constant splitting
type on Fano manifolds due to Andreatta and Wi\'sniewski (cf. \cite[Prop.~1.2]{AW}).

Let $X$ be a Fano manifold of Picard number one, denote by $\cO(1)$ the ample
generator of $\Pic(X)$
and assume that $X$ admits an unsplit covering family of rational curves $\cM$.
Denoting by $\cU$ the 
universal family, which has a natural $\P^1$-bundle structure over $\cM$, 
we get the diagram:
$$
\xymatrix{\cU\ar^{p_1}[r]\ar^{p_2}[d]&X\\\cM&}
$$
Given any subset $Y\subset X$ we denote $\cM_Y:=p_2p_1^{-1}(Y)$, and by
$\Loc(\cM)_Y$ the subscheme
$p_1p_2^{-1}(\cM_Y)$ of points lying in curves of the family $\cM$ meeting $Y$.
Note that for every point $x$ the variety $\cM_x$ is proper by hypothesis.

\begin{thm}\label{thm:criterion}
 Let $X$ be a Fano manifold of Picard number one admitting an unsplit covering
family of rational curves 
$\cM$ and let $r$ be a positive integer smaller than
or equal to $\dim\cM_x$. Assume that
$\dim H^{2s}(\cM_x,\C)=1$
for all $s\leq \lfloor r/2\rfloor$ and
every
$x\in X$. Then the only uniform vector bundles of rank $r$ on $X$ are direct
sums of line bundles.
\end{thm}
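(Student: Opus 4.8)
The plan is to adapt the classical "standard construction" to this geometric setting, using the universal family $\cU$ and the diagram $p_1,p_2$. Given a uniform vector bundle $E$ of rank $r\le\dim\cM_x$ with splitting type $(a_1,\dots,a_r)$, the first step is to reduce to the case where $a_1>a_2$, or if the splitting type is constant ($a_1=\dots=a_r$) to invoke the Andreatta--Wi\'sniewski result \cite[Prop.~1.2]{AW} directly, which says a uniform bundle of constant splitting type on a Fano manifold is a twist of a trivial bundle. So assume $a_1=\dots=a_k>a_{k+1}$ for some $k<r$. After twisting by $\cO(-a_1)$ (which does not change the problem) we may assume $a_1=\dots=a_k=0>a_{k+1}$, so that on each curve $\ell$ of $\cM$, the bundle $E|_\ell$ has a canonical maximal trivial subbundle of rank $k$ (the sum of the $\cO$-summands), equivalently $p_1^*E$ restricted to each fiber of $p_2$ has a canonical trivial quotient of rank $r-k$ — more precisely $H^0(\ell, E^\vee|_\ell)$ is $k$-dimensional and the evaluation picks out the trivial part.

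Next I would push forward along $p_2$: form $\cF:=p_2{}_*(p_1^*E)$ on $\cM$; by the uniformity and cohomology-and-base-change this is locally free, and its fiber at $[\ell]\in\cM$ is $H^0(\ell,E|_\ell)$, which has rank $k$. Pulling back, $p_2^*\cF\to p_1^*E$ is a morphism of bundles on $\cU$ which is fiberwise (over $\cM$) the inclusion of the trivial summand, hence its image is a subbundle of $p_1^*E$ of rank $k$ that is trivial on every fiber of $p_2$. Now the crucial point: I want to descend this rank-$k$ subbundle along $p_1$ to a subbundle $E'\subset E$ on $X$. This is where the hypothesis $\dim H^{2s}(\cM_x,\C)=1$ for $s\le\lfloor r/2\rfloor$ enters. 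The obstruction to descent is controlled, following the Kachi--Sato and Forster--Hirschowitz--Schneider philosophy reformulated geometrically, by the non-triviality of certain cohomology classes on the fibers $p_1^{-1}(x)$; note $p_1^{-1}(x)$ maps to $\cM_x$ (it is essentially $\cM_x$ together with the marked point, so it has the same cohomology in the relevant range), and the condition that its even Betti numbers up to degree $r$ equal $1$ forces the relevant Chern/obstruction classes to be multiples of powers of the hyperplane class, which combined with the rank bound $k\le r\le\dim\cM_x$ makes them vanish or be constant along the fibers — allowing the subbundle to be defined on $X$.

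Once we have $E'\subset E$ a subbundle on $X$ of rank $k<r$ which is trivial on every line of $\cM$, the quotient $E/E'$ is a uniform vector bundle of rank $r-k$ with splitting type $(a_{k+1},\dots,a_r)$ (after untwisting, $(0,\dots,0)$ shifted, i.e. strictly smaller top value), and $E'$ itself is uniform of constant splitting type $(0,\dots,0)$ hence by Andreatta--Wi\'sniewski a direct sum of copies of $\cO(a_1)$. We then conclude by a double induction — on the rank $r$ and on the "spread" $a_1-a_r$ of the splitting type — that both $E'$ and $E/E'$ are direct sums of line bundles; finally one checks the extension $0\to E'\to E\to E/E'\to 0$ splits, using that $\Ext^1$ between the relevant line bundles vanishes (the summands of $E'$ have strictly larger degree on lines than those of $E/E'$, and on a Fano manifold of Picard number one $H^1(X,\cO(m))=0$ for the relevant $m$, or more directly one argues as in \cite[Thm.~3.2.3]{OSS}). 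The main obstacle, and the technical heart of the proof, is the descent step: making precise the geometric reformulation of the Forster--Hirschowitz--Schneider lemma in terms of cycles on the fibers of $p_1$ and extracting from the single-even-Betti-number hypothesis exactly the vanishing needed — this is presumably why the hypothesis is phrased up to $\lfloor r/2\rfloor$, since the obstruction to building a rank-$k$ (with $k\le\lceil r/2\rceil$ in the worst case, or handled symmetrically by dualizing) trivial subbundle lives in cohomological degrees up to $2k$.
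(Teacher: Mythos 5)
Your overall architecture is the paper's: the standard construction on the universal family $\cU$, the Andreatta--Wi\'sniewski result \cite[Prop.~1.2]{AW} for the constant-splitting-type piece, induction on the rank, and Kodaira vanishing to split the resulting extension. (You work with the rank-$k$ subbundle attached to the maximal degree $a_1$, where the paper works with the quotient attached to the minimal degree; these are dual formulations and equally good, provided you first dualize so that the extremal multiplicity satisfies $k\leq\lfloor r/2\rfloor$ -- a normalization you only mention loosely at the end, but which is essential, since the cohomological hypothesis only covers $s\leq\lfloor r/2\rfloor$.)

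However, the step you yourself flag as ``the main obstacle'' -- descending the rank-$k$ subbundle of $p_1^*E$ from $\cU$ to $X$ -- is exactly the content of the theorem, and your sketch of it does not work as stated. What must be proved is that for each $x\in X$ the subspaces $\cF_{[\ell]}\subset E_x$, for $\ell\in\cM_x$, all coincide; equivalently, that the induced morphism $\cM_x\to\G(k-1,r-1)$ is constant. Your argument (``the relevant Chern/obstruction classes are multiples of powers of the hyperplane class, hence vanish or are constant along the fibers'') does not follow from the hypotheses: one-dimensionality of $H^{2s}(\cM_x,\C)$ by itself forces no class to vanish, and $\cM_x$ carries no distinguished hyperplane class. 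The paper's mechanism (Lemma \ref{lem:constant}) is concrete: pull back the universal sequence of $\G(k-1,r-1)$ along $\varphi:\cM_x\to\G(k-1,r-1)$, use the hypothesis to identify the Chern classes $c_i$ of $\varphi^*\cQ$ and $d_j$ of $\varphi^*\cS$ with integers, and exploit the Whitney relation $c(\varphi^*\cS^\vee)\cdot c(\varphi^*\cQ)=1$, which is an honest polynomial identity in degrees $\leq r$ precisely because $\dim\cM_x\geq r$; the resulting banded linear system has maximal rank unless all $c_i=0$, whence $\deg\varphi^*\cO_{\G(k-1,r-1)}(1)=0$ and $\varphi$ is constant. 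Your proposal never invokes the Whitney relation nor explains where the hypothesis $r\leq\dim\cM_x$ enters (it is exactly what guarantees the relation is not truncated too early), so the descent -- and with it the theorem -- remains unproved in your write-up; the surrounding induction and extension-splitting arguments are fine once this lemma is supplied.
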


\begin{rem}\label{rem:criterion1}
 {\rm The criterion can be easily checked for many examples of Fano manifolds,
e.g. projective spaces, quadrics and others.
Note that if $X$ admits an embedding for which the curves parametrized by $\cM$
are lines, then $\cM_x$ coincides with the VMRT of $X$ at $x$.
For projective spaces, Theorem \ref{thm:criterion} tells us that the only uniform
vector bundles on $\P^n$ of rank smaller than $n$ are direct sums of line bundles. The result is sharp, since
$T_{\P^n}$ is unsplit, and it was first obtained in \cite{V} and
\cite{S}.}
\end{rem}

In the case of quadrics, Theorem \ref{thm:criterion} provides the same statement as in \cite[Thm. 4.1]{KS}.

\begin{cor}\label{cor:critquad}
Let $\Q$ be a smooth quadric of dimension $n$, with $n\geq 5$ odd
(resp. even). The only uniform vector bundles on $\Q$
of rank smaller than or equal to $n-2$ (resp. $n-3$)
are direct sums of line bundles.
\end{cor}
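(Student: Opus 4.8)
The plan is to deduce the statement directly from Theorem \ref{thm:criterion} by taking $\cM$ to be the family of lines on the quadric $\Q=\Q^n$ and computing the even Betti numbers of $\cM_x$. First I would recall that $\Q^n$ is a Fano manifold of Picard number one which is covered by an unsplit family of lines, and that for a point $x\in\Q^n$ the subvariety $\cM_x\subset\cM$ parametrizing lines through $x$ is well known: identifying $\Q^n$ with a smooth quadric hypersurface in $\P^{n+1}$, the lines through $x$ sweep out the intersection of $\Q^n$ with its embedded tangent space $T_x\Q^n\cong\P^n$, which is a quadric cone with vertex $x$ over a smooth quadric $\Q^{n-2}$; hence $\cM_x\cong\Q^{n-2}$. (Equivalently, $\cM_x$ is the VMRT of $\Q^n$ at $x$, a smooth quadric of dimension $n-2$, as remarked after Theorem \ref{thm:criterion}.) In particular $\dim\cM_x=n-2$.

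Next I would invoke the classical description of the cohomology of smooth quadrics: for $\Q^{n-2}$ one has $\dim H^{2s}(\Q^{n-2},\C)=1$ for every $s$ in the admissible range, \emph{except} when $n-2$ is even and $s=(n-2)/2$, in which case the middle Betti number equals $2$ (the two rulings by linear subspaces of maximal dimension). Now apply Theorem \ref{thm:criterion} with this $\cM$: the hypothesis $\dim H^{2s}(\cM_x,\C)=1$ for all $s\le\lfloor r/2\rfloor$ is satisfied precisely when the bad middle index of $\Q^{n-2}$ is avoided, i.e. when $\lfloor r/2\rfloor<(n-2)/2$ in the case $n$ even, and unconditionally in the admissible range when $n$ odd (then $n-2$ is odd and $\Q^{n-2}$ has no even-dimensional middle cohomology causing trouble). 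Chasing the inequalities: if $n$ is odd, $\dim\cM_x=n-2$ is odd and every $H^{2s}$ with $2s\le n-2$, i.e. $s\le\lfloor r/2\rfloor$ for $r\le n-2$, is one-dimensional, giving the bound $r\le n-2$; if $n$ is even, we need $\lfloor r/2\rfloor\le (n-2)/2-1=(n-4)/2$, i.e. $r\le n-3$, to stay below the middle. Theorem \ref{thm:criterion} then yields that every uniform vector bundle of rank $r$ in this range is a direct sum of line bundles.

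The only genuine point requiring care — and the main (mild) obstacle — is the bookkeeping of the parity cases and the exact location of the exceptional middle cohomology class of an even-dimensional quadric, together with checking that the numerical thresholds $n-2$ (resp. $n-3$) are exactly what the inequality $\lfloor r/2\rfloor\le\lfloor(n-3)/2\rfloor$ (resp. the strict version) produces; everything else is a direct citation of Theorem \ref{thm:criterion} and standard facts about quadrics. I would also note in passing that $r\le\dim\cM_x=n-2$ is automatically respected by both bounds, so the rank hypothesis of the Theorem is met.
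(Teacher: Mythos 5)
Your proposal is correct and follows essentially the same route as the paper: identify $\cM_x$ with the VMRT $\Q^{n-2}$, use the standard even Betti numbers of quadrics (all equal to $1$ except the middle one of an even-dimensional quadric, which is $2$), and feed the parity bookkeeping into Theorem \ref{thm:criterion} to get the bounds $r\le n-2$ for $n$ odd and $r\le n-3$ for $n$ even. The numerical thresholds you derive coincide with the paper's computation of $s_0=\min\{s:\dim H^{2s}(\cM_x,\C)\neq 1\}-1$, so there is nothing to add.
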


\begin{proof}
Note that in this case $\cM_x=\Q^{n-2}$ (cf. \cite{Hw}) and denote by 
$s_0=\min\{s,\,\dim H^{2s}(\cM_x,\C)\neq 1\}-1$,
which is then equal
to $n-2$ if $n$ is odd and $(n-2)/2-1$ if $n$ is even. If $n$ is odd
we may apply the criterion to every vector bundle of rank smaller
than or equal to $n-2=\dim\cM_x=s_0$. If $n$ is even we need to
assume also that $\lfloor r/2\rfloor\leq s_0=(n-4)/2$, i.e. $r\leq n-3$.
\end{proof}

\begin{proof}[Proof of Theorem \ref{thm:criterion}]
Let $E$ be a vector bundle of rank $r$. If the splitting type of $E$ is constant, 
then it is a direct sum of line bundles by \cite[Prop.~1.2]{AW}. Hence we may 
assume, after dualizing if necessary, that $E$ has splitting type
$$
(a_1,\dots,a_k,\dots, a_r)\mbox{, with }a_1=\dots=a_k<a_{k+1}\leq\dots\leq
a_r,
$$
where $k\leq \lfloor r/2\rfloor$.

We will prove that $E$ splits by induction on $r$. Since the only Fano variety 
of dimension $1$ is $\P^1$, for which the result is known, we
may assume that $\dim X>1$. Then
Kodaira Vanishing Theorem allows us to reduce the proof to showing that $E$
admits a surjective morphism onto a direct sum of line bundles.

Let us first construct the family of minimal sections of $E$ over curves of
$\cM$. With
this we mean the family of rational curves on $\P(E)$ determined by surjective
maps $E|_\ell\to\cO_\ell(a_1)$, where $\ell\in\cM$.

Denote by $\cO_\cU(1)$ the tautological line bundle of the $\P^1$-bundle $p_2:\cU\to\cM$.
With the same notation as above, denote by $F$ the vector bundle
$$F:=\left({p_2}_*\left(p_1^*E\otimes\cO_\cU(-a_1)\right)^\vee\right)^\vee,$$
$\cM_E:=\P(F)$ and
$\cU_E:=\P(p_2^*F)$. Note that every element of $F^\vee$ corresponds to a pair
$(\ell,s)$, where $\ell$ is a curve of $\cM$ and $s$ is a morphism
$E|_\ell\to\cO_\ell(a_1)$. The natural surjective morphism $p_1^*E\to p_2^*F\otimes\cO_\cU(a_1)$
provides an injection $\P(p_2^*F)\hookrightarrow\P(p_1^*E)$ and we get a
commutative diagram:
$$
\xymatrix{\cU_E\ar[dd]^{\widetilde{p_2}}\ar[dr]\ar[r]\ar@/^1pc/[rr]^{
\widetilde{p_1}}&\P(p_1^*E)\ar[d]\ar[r]&\P(E)\ar[d]^{\pi}\\
&\cU\ar[r]^{p_1}\ar[d]^{p_2}&X\\
\cM_E\ar[r]&\cM&}
$$

For every point $x\in X$ the map $\widetilde {p_1}$ provides a morphism
$$\widetilde{p_1}^x:\cM_x\to\G(k-1,\P(E_x)),$$ which is constant by Lemma
\ref{lem:constant} below. Note that $k\leq\lfloor r/2\rfloor$, so by hypothesis 
$H^{2s}(\cM_x,\C)$ has dimension one for every $s\leq k$.

In this way the image of $\widetilde{p_1}$ is a
projective subbundle $\P(E_0)\subset\P(E)$, and the vector bundle $E_0$ is a
quotient of $E$ that has constant splitting type on every curve $\ell\in\cM$. 
Then we may apply
\cite[Prop.~1.2]{AW} in order to conclude that $E_0$ is isomorphic to 
$\cO_X(b)^{\oplus k}$ for some $b$. This concludes the proof.
\end{proof}

\begin{lem}\label{lem:constant}
 Let $k<r$ be positive integers and $M$ be a projective variety of dimension
bigger than or equal to $r$ verifying that
$\dim H^{2s}(M,\C)=1$
for every $s=1,\dots,k$. 
Then the only morphisms from $M$ to Grassmannians $\G(k-1,r-1)$
are constant.
\end{lem}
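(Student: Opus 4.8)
The plan is to analyze the pullback to $M$ of the tautological subbundle on $\G(k-1,r-1)$ and show that a nonconstant map would force too much even cohomology. Let $f\colon M\to\G(k-1,r-1)$ be a morphism, and let $\cS$ denote the rank-$k$ tautological subbundle on the Grassmannian (so that $\P(\cS)\subset M\times\P^{r-1}$ is the incidence variety over the $\G$). Set $\cE:=f^*\cS^\vee$, a rank-$k$ bundle on $M$, and consider its Chern classes $c_1(\cE),\dots,c_k(\cE)\in H^\bullet(M,\C)$. Since $\dim H^{2s}(M,\C)=1$ for $s=1,\dots,k$, each $H^{2s}(M,\C)$ is spanned by a single class; I will fix a generator $h\in H^2(M,\C)$ and note that whenever $H^{2s}(M,\C)\neq 0$ it is spanned by (the nonzero multiple of) $h^s$. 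Thus there exist scalars $\lambda_1,\dots,\lambda_k\in\C$ with $c_s(\cE)=\lambda_s h^s$ for all $s$.

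The key step is to exploit that the tautological quotient $\cQ_\G$ on $\G(k-1,r-1)$ is globally generated of rank $r-k\geq 1$, hence so is $\cF:=f^*\cQ_\G$ on $M$, and it sits in an exact sequence $0\to\cE^\vee\to\cO_M^{\oplus r}\to\cF\to 0$; equivalently $c(\cE^\vee)\,c(\cF)=1$ in $H^\bullet(M,\C)$. Globally generated bundles have Chern classes represented by effective cycles, so $c_j(\cF)$ is a nonnegative multiple of $h^j$ (with respect to the natural positive generator of $H^{2j}$) for $j=1,\dots,\min\{r-k,k\}$, and similarly $c_s(\cE^\vee)=(-1)^s c_s(\cE)$ is, up to sign $(-1)^s$, a nonnegative multiple of $h^s$. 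Feeding these sign constraints into the relation $c(\cE^\vee)c(\cF)=1$ degree by degree, a short induction shows that all the $\lambda_s$ vanish: at degree $1$ one gets $-c_1(\cE)+c_1(\cF)=0$ with $c_1(\cF)$ effective and $-c_1(\cE)$ effective, forcing both to be zero, and then at degree $s$ the relation expresses $c_s(\cF)+(-1)^sc_s(\cE)$ as a sum of products of lower Chern classes, all already shown to vanish, so again two "effective-of-opposite-sign" classes sum to zero and hence each is zero. Therefore $\cF$ has trivial Chern classes.

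Now I invoke that a globally generated vector bundle on a projective variety with all Chern classes zero is trivial: the evaluation $\cO_M^{\oplus r}\to\cF$ is surjective, and $c(\cF)=1$ forces the kernel $\cE^\vee$ to have $c(\cE^\vee)=1$ as well; since $\cF$ is globally generated with $c_{r-k}(\cF)=0$, a general collection of $r-k$ global sections of $\cF$ vanishes nowhere, trivializing a rank-$(r-k)$ subbundle, and comparing determinants (both trivial) shows $\cF\cong\cO_M^{\oplus(r-k)}$. Equivalently, the composite $\cO_M^{\oplus r}\to\cF\cong\cO_M^{\oplus(r-k)}$ is a constant surjection of trivial bundles, i.e. given by a fixed $r\times(r-k)$ matrix, so the classifying map $M\to\G(k-1,r-1)$ (which sends a point to the kernel of the fibrewise evaluation $\C^r\to\cF_x$) is constant.

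The main obstacle is the positivity bookkeeping in the middle step: one must be careful that "$\dim H^{2s}(M,\C)=1$" together with global generation really does pin down the sign of each Chern class against a single consistent generator $h$, and that the cancellation $c(\cE^\vee)c(\cF)=1$ propagates the vanishing correctly through all degrees up to $k$ (note we only control $H^{2s}$ for $s\le k$, which is exactly the range of $c_s(\cE)$, so the argument never needs cohomology in degrees it cannot see). Once $c(\cE)=c(\cF)=1$ is established, triviality of the globally generated $\cF$ and hence constancy of $f$ is standard.
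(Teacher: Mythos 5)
Your setup (pull back the tautological sequence and use the Whitney relation together with the one–dimensionality of $H^{2s}(M,\C)$ for $s\le k$) is the right starting point, but the engine of your argument --- the ``effective classes of opposite sign must both vanish'' cancellation --- is wrong. Since $\cE=f^*\cS^\vee$ is the globally generated bundle, it is $c_s(\cE)$, not $c_s(\cE^\vee)=(-1)^sc_s(\cE)$, that is a nonnegative multiple of $h^s$ (you say this yourself one sentence earlier); so in degree one the relation reads $c_1(\cF)-c_1(\cE)=0$ with \emph{both} classes nonnegative, which only gives $c_1(\cF)=c_1(\cE)$ and not their vanishing, and the same collapse happens at every step of your induction. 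No positivity bookkeeping can force vanishing here, because the tautological sequence on the Grassmannian itself satisfies every nefness/effectivity constraint you invoke while $c_1(\cS^\vee)=c_1(\cQ_\G)=\sigma_1\neq 0$. Concretely, take $k=1$, $M=\P^{r-1}$ and $f=\mathrm{id}\colon \P^{r-1}\to\G(0,r-1)$: then $c_1(\cE)=c_1(\cF)=h$, both effective and nonzero, so your degree-one claim fails outright. Note also that your argument never uses the hypothesis $\dim M\ge r$, which is essential: in the example just given all the other hypotheses hold and the conclusion is false, the point being that on $\P^{r-1}$ the Whitney relation is only an identity modulo $h^{r}=0$.

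What actually proves the lemma is a degree count, not positivity, and this is the paper's route. Choose $h$ to be (a multiple of) an ample class, so $h^m\neq 0$ for all $m\le r\le\dim M$. The hypothesis lets you write the Chern classes of the rank-$k$ factor as $\lambda_s h^s$ ($s\le k$), and solving the Whitney relation degree by degree shows the Chern classes of the complementary rank-$(r-k)$ factor are also multiples of powers of $h$. Since the product of the two total Chern polynomials has degree at most $k+(r-k)=r\le\dim M$, no truncation occurs and the relation becomes an exact identity $\bigl(\sum_{i\le k}\lambda_i t^i\bigr)\bigl(\sum_{j\le r-k}\mu_j t^j\bigr)=1$ in $\C[t]$; a unit of $\C[t]$ is constant, so all $\lambda_i,\mu_j$ with positive index vanish (the paper phrases this as a banded matrix of maximal rank annihilating a nonzero vector). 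In particular $c_1(f^*\cO_{\G(k-1,r-1)}(1))=0$, and since this line bundle is the pullback of a very ample one, $f$ contracts every curve and is constant --- a quicker endgame than your ``trivial Chern classes plus global generation implies trivial bundle'' step, which is fine in itself but cannot be reached by the positivity argument as written.
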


\begin{proof}
Let $\varphi:M\to\G(k-1,r-1)$ be a morphism and consider the restrictions
$\varphi^*\cS$, $\varphi^*\cQ$ of the dual of the universal bundle and of the universal
quotient
bundle on $\G(k-1,r-1)$, respectively. Denote by $d_1,\dots,d_{r-k}$ and by
$c_1,\dots,c_k$ their  Chern classes. Note that our hypothesis on
the cohomology 
of $M$ allows us to identify the $c_i$'s with integers. 

 From the exact sequence:
$$
0 \to \varphi^*\cS^\vee \to \cO_M^{\oplus n+1} \to \varphi^*\cQ \to 0,
$$
we get the equality of polynomials:
$$1=\sum_{i=0}^kc_it^i\cdot\sum_{j=0}^{r-k}(-1)^jd_jt^j.$$

Note that in principle this equality is satisfied only modulo $t^{\dim M}$; that
is why we need to assume that $\dim M\geq r$.
We may write this equation in the following way:
$$
\underbrace{\left(\begin{array}{ccccc}
c_1&1&\cdots&\cdots&0\\
\vdots&c_1&\ddots& &\vdots\\
c_{k}&\vdots&\ddots&\ddots&\vdots\\
0&c_{k}& &\ddots&1\\
\vdots&\vdots&\ddots& &c_1\\
\vdots&\vdots& &\ddots& \vdots\\
0&0&\cdots&\cdots &c_{k}\\
\end{array}\right)}_A
\left(\begin{array}{c}
1\\-d_1\\ \vdots\\ \vdots\\(-1)^{r-k}d_{r-k}
\end{array}\right)=0
$$
and we see that if any $c_i$ is different from zero then the matrix $A$ has
maximal rank, a contradiction. Therefore $c_i=0$ for all $i>0$. In particular
$0=c_1=\deg\varphi^*\cO_{\G(k-1,r-1)}(1)$, hence $\varphi$
is constant.
\end{proof}

Note that the conclusion of Lemma \ref{lem:constant} also holds if we only
assume that $M$ is chain-connected by projective varieties of 
dimension bigger than or equal to $r$ and whose 
$2s$-th cohomology group has dimension
one for all $s=0,\dots,\lfloor r/2\rfloor$. In particular we obtain the
following:

\begin{cor}\label{cor:criterion2}
Let $X$ be a Fano manifold of Picard number one covered by an unsplit family of
rational curves $\cM$ and let $r$ be a positive integer. Assume further that for
every $x\in X$,
$\cM_x$ is chain-connected by projective varieties of dimension bigger than or
equal to $r$ whose 
$2s$-th cohomology group with complex coefficients has dimension
one for all
$s=0,\dots,\lfloor r/2\rfloor$. Then $X$ does not admit uniform vector
bundles of rank smaller than $r$, apart
of direct sums of line bundles.
\end{cor}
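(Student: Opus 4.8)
The plan is to run the argument proving Theorem \ref{thm:criterion} again, essentially word for word, the only change being that the single appeal to Lemma \ref{lem:constant} is replaced by the chain-connected strengthening announced just above. Precisely, I will first record the following fact: \emph{if a projective variety $M$ is chain-connected by projective subvarieties $N_i$, each of dimension $\geq\rho$ and with $\dim H^{2s}(N_i,\C)=1$ for $s=0,\dots,\lfloor\rho/2\rfloor$, then for every $k$ with $1\leq k\leq\lfloor\rho/2\rfloor$ any morphism $\varphi\colon M\to\G(k-1,\rho-1)$ is constant.} This is immediate: for each $i$ the hypotheses of Lemma \ref{lem:constant} hold for $\varphi|_{N_i}$ (one has $k\leq\lfloor\rho/2\rfloor<\rho$, $\dim N_i\geq\rho$, and $\dim H^{2s}(N_i,\C)=1$ for $s=1,\dots,k$), so $\varphi|_{N_i}$ is constant; and since the $N_i$ chain-connect $M$, any two points of $M$ are joined by a connected chain of such $N_i$, on each of which $\varphi$ is constant, whence $\varphi$ is globally constant.

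With this in hand I would argue as follows. Let $E$ be a uniform vector bundle on $X$ of rank $r'<r$; if its splitting type is constant it splits by \cite[Prop.~1.2]{AW}, so after dualizing I may assume the splitting type is $(a_1,\dots,a_k,a_{k+1},\dots,a_{r'})$ with $a_1=\dots=a_k<a_{k+1}\leq\dots\leq a_{r'}$ and $1\leq k\leq\lfloor r'/2\rfloor$ (in particular $r'\geq 2$). Exactly as in Theorem \ref{thm:criterion}, an induction on the rank together with Kodaira vanishing reduces the task to producing a surjection from $E$ onto a direct sum of line bundles; so I form $F:=\big({p_2}_*(p_1^*E\otimes\cO_\cU(-a_1))^\vee\big)^\vee$, $\cM_E=\P(F)$, $\cU_E=\P(p_2^*F)$, obtain the same commutative diagram, and for each $x\in X$ the morphism $\widetilde{p_1}^x\colon\cM_x\to\G(k-1,\P(E_x))=\G(k-1,r'-1)$.

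Now I would apply the strengthening with $\rho=r'$: by hypothesis $\cM_x$ is chain-connected by projective subvarieties of dimension $\geq r$, hence of dimension $\geq r'$ since $r'<r$, whose $2s$-th Betti number equals $1$ for $s=0,\dots,\lfloor r/2\rfloor$, hence for $s=0,\dots,\lfloor r'/2\rfloor$; and $1\leq k\leq\lfloor r'/2\rfloor$. Therefore $\widetilde{p_1}^x$ is constant for every $x\in X$, so the image of $\widetilde{p_1}$ is a projective subbundle $\P(E_0)\subset\P(E)$ with $E_0$ a rank-$k$ quotient of $E$ of constant splitting type along the curves of $\cM$; by \cite[Prop.~1.2]{AW}, $E_0\cong\cO_X(a_1)^{\oplus k}$. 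The kernel $E'$ of the surjection $E\twoheadrightarrow\cO_X(a_1)^{\oplus k}$ is uniform of rank $r'-k<r$ with all splitting-type entries $>a_1$, hence splits by the inductive hypothesis; since those entries exceed $a_1$, Kodaira vanishing gives $\Ext^1(\cO_X(a_1)^{\oplus k},E')=0$, so the extension splits and $E\cong E'\oplus\cO_X(a_1)^{\oplus k}$ is a direct sum of line bundles.

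I do not expect a genuine obstacle: the statement is a corollary and all the geometry is inherited from Theorem \ref{thm:criterion}. The one point requiring care is the numerology — verifying that passing from "rank $\leq r$" to "rank $<r$" is exactly what turns the hypothesis $\dim N_i\geq r$ into the inequality $\dim N_i\geq r'$ needed to apply Lemma \ref{lem:constant} on each connecting piece, and that the chain $k\leq\lfloor r'/2\rfloor\leq\lfloor r/2\rfloor$ makes the Betti-number hypothesis strong enough for every $k$ that can actually occur in a splitting type of a rank $r'$ bundle.
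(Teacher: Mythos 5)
Your proposal is correct and follows the paper's own route: the paper proves the corollary precisely by observing that Lemma \ref{lem:constant} still holds when $M$ is only chain-connected by pieces satisfying its hypotheses (constancy on each piece plus chain-connectedness gives global constancy), and then re-running the proof of Theorem \ref{thm:criterion} with this strengthened lemma, exactly as you do. Your numerological checks ($k\leq\lfloor r'/2\rfloor\leq\lfloor r/2\rfloor$, $\dim N_i\geq r>r'$) and the explicit Ext-vanishing conclusion of the induction are just the details the paper leaves implicit.
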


It is classically known that the VMRT at every point of the Grassmannian 
$\G(k,n)$ is the Segre embedding of $\P^{k}\times\P^{n-k-1}$ in $\P(\Omega^1_{G})_x=\P(\cQ^\vee\otimes\cS)_x$. We refer the interested 
reader to \cite{LM} for a description of the VMRT of rational homogeneous spaces 
covered by lines. See also 
\cite[Section~1.4]{Hw}, where the VMRT's of many examples of Fano manifolds are shown.
Since $\P^{k}\times\P^{n-k-1}$ is chain-connected by linear subspaces of
dimension $\min\{k,n-k-1\}$, we obtain the following:

\begin{cor}\label{cor:critgrass}
 The only uniform vector bundles of rank $r<\min\{k+1,n-k\}$ on a Grassmannian
$\G(k,n)$ are direct sums of line bundles.
\end{cor}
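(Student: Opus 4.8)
The plan is to derive Corollary \ref{cor:critgrass} directly from Corollary \ref{cor:criterion2} by identifying the family $\cM$ and checking the chain-connectedness hypothesis on $\cM_x$. First I would take $\cM$ to be the unsplit covering family of lines on $\G(k,n)$, parametrized by the flag manifold $\F(k-1,k+1,n)$; this is an unsplit family of rational curves as recalled in Section \ref{sec:conventions}, and $\G(k,n)$ has Picard number one, so the hypotheses of Corollary \ref{cor:criterion2} that do not concern $\cM_x$ are satisfied. Since the lines of this family are exactly the lines in the Plücker embedding, for each $x\in\G(k,n)$ the variety $\cM_x$ is canonically identified with the VMRT of $\G(k,n)$ at $x$, which by the classical description (see \cite{LM} or \cite[Section~1.4]{Hw}) is the Segre variety $\P^{k}\times\P^{n-k-1}$ sitting inside $\P(\Omega^1_{G})_x=\P(\cQ^\vee\otimes\cS)_x$.

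Next I would verify that this $\cM_x=\P^{k}\times\P^{n-k-1}$ is chain-connected by projective subvarieties of the required type. The natural subvarieties to use are the linear fibers $\{p\}\times\P^{n-k-1}$ and $\P^{k}\times\{q\}$; these are honest projective spaces of dimensions $n-k-1$ and $k$ respectively, so each has dimension at least $\min\{k,n-k-1\}$, and the cohomology of $\P^m$ satisfies $\dim H^{2s}(\P^m,\C)=1$ for all $s=0,\dots,m$, hence in particular for all $s\le\lfloor r/2\rfloor$ as soon as $r\le\min\{k+1,n-k\}$, since then $\lfloor r/2\rfloor\le\min\{k,n-k-1\}$. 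Any two points of $\P^{k}\times\P^{n-k-1}$ can be joined by a chain of length two consisting of one vertical and one horizontal ruling through the intermediate point, so $\cM_x$ is chain-connected by such subvarieties. Thus for $r<\min\{k+1,n-k\}$ the hypotheses of Corollary \ref{cor:criterion2} hold with $\dim\P^m\ge r$ for both factors (note $r-1<\min\{k,n-k-1\}$ forces $r\le\min\{k,n-k-1\}\le\min\{k+1,n-k\}-1$, so the dimension bound is met).

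Finally I would invoke Corollary \ref{cor:criterion2} to conclude that every uniform vector bundle on $\G(k,n)$ of rank $r<\min\{k+1,n-k\}$ with respect to $\cM$ is a direct sum of line bundles. It remains only to observe that ``uniform'' without reference to a family is understood as uniform with respect to the family of lines, so the statement as phrased follows. I do not expect a genuine obstacle here: the only point requiring a little care is matching the numerology (that $r<\min\{k+1,n-k\}$ gives both $\lfloor r/2\rfloor\le\min\{k,n-k-1\}$ and the dimension bound $\dim\ge r$ for the rulings), and confirming that $\cM_x$ is literally the Segre variety rather than a degeneration of it, which is the content of the classical computation of the VMRT of the Grassmannian cited above.
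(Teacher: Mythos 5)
Your argument is correct and is essentially the paper's own derivation: the paper obtains Corollary \ref{cor:critgrass} precisely by noting that $\cM_x$ is the Segre variety $\P^{k}\times\P^{n-k-1}$, chain-connected by its linear rulings of dimension $\min\{k,n-k-1\}$, and then invoking Corollary \ref{cor:criterion2}. The only point needing care (a looseness you share with the paper's literal phrasing of Corollary \ref{cor:criterion2}, which asks for connecting varieties of dimension $\geq r$ but concludes only for ranks strictly below $r$) is the top case $r=\min\{k,n-k-1\}$, where one uses the criterion in its ``rank $\le r$'' form; this is exactly what the underlying argument via Lemma \ref{lem:constant} gives, since a morphism to $\G(k-1,r-1)$ coming from a rank $r$ bundle is constant as soon as the connecting varieties have dimension $\geq r$.
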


More generally, for Fano manifolds covered by linear spaces, we may state a
splitting criterion independent on the rank:

\begin{cor}\label{cor:uniformlinear}
Let $X\subset\P^N$ be a Fano manifold of Picard number one
covered by a family $\cL$ of linear subspaces
of dimension $d\geq 2$, and assume that at every point $x\in X$ the VMRT of $X$
at $x$ associated to the family of lines
is chain-connected by the corresponding linear spaces of dimension $d-1$.
Let $E$ be a vector bundle on $X$ verifying that $E|_L$ is a direct sum of line
bundles for every $L$ of $\cL$. Then $E$ is a direct sum of line bundles.
\end{cor}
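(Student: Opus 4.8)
The plan is to reduce Corollary \ref{cor:uniformlinear} to the rank-independent splitting criterion of Corollary \ref{cor:criterion2}. The key observation is that the hypothesis ``$E|_L$ is a direct sum of line bundles for every $L\in\cL$'' is much stronger than mere uniformity with respect to the family of lines: since each $L\cong\P^d$ with $d\geq 2$, a direct sum of line bundles on $L$ is forced (by $\Pic(\P^d)=\Z$) to be of the form $\bigoplus_i\cO_L(a_i)$, and the restriction of such a bundle to any line in $L$ is $\bigoplus_i\cO_{\P^1}(a_i)$. Thus $E$ is uniform with respect to the family of lines $\cM$ covering $X$ (the family whose members are the lines contained in members of $\cL$), and moreover its splitting type is the same on every line. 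The first step, then, is to make this precise: show that the splitting type of $E$ restricted to lines is independent of the chosen line, using the connectedness of $\cL$ and of the space of lines, so that $E$ is genuinely uniform in the sense of Section \ref{sec:conventions}.

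Next I would invoke the geometry of the VMRT. By hypothesis the VMRT $\cM_x\subset\P(\Omega_{X,x})$ of $X$ at $x$ (associated to the family of lines) is chain-connected by the linear subspaces of dimension $d-1$ that arise as the VMRT's of the members of $\cL$ through $x$. A linear $\P^{d-1}$ has $\dim H^{2s}(\P^{d-1},\C)=1$ for all $s=0,\dots,d-1$, and in particular for all $s\le\lfloor r/2\rfloor$ no matter what $r$ is, as long as $d-1\ge\lfloor r/2\rfloor$ — but we in fact get the cleaner statement by running the argument of Lemma \ref{lem:constant} directly. Following the proof of Theorem \ref{thm:criterion}: after dualizing if necessary we may assume the splitting type is $(a_1=\dots=a_k<a_{k+1}\le\dots\le a_r)$ with $k\le\lfloor r/2\rfloor$, form the family of minimal sections $\widetilde{p_1}\colon\cU_E\to\P(E)$ exactly as there, and obtain for each $x$ the morphism $\widetilde{p_1}^x\colon\cM_x\to\G(k-1,\P(E_x))$. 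The point is to show this morphism is constant. It is constant on each linear $\P^{d-1}\subset\cM_x$ of the chain-connecting family, because a morphism from $\P^{d-1}$ (with $d-1\ge 1$) to a Grassmannian that has trivial pullback of $\cO(1)$ is constant — and here the pullback is trivial since $E|_L$ splits, so the minimal sections over lines in a fixed $L$ fit into a constant family. Chain-connectedness then propagates constancy over all of $\cM_x$, so $\widetilde{p_1}^x$ is constant and its image defines a subbundle $\P(E_0)\subset\P(E)$ with $E_0$ a quotient of $E$ having constant splitting type on every line of $\cM$.

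Finally, by \cite[Prop.~1.2]{AW} the quotient $E_0$ is isomorphic to $\cO_X(b)^{\oplus k}$ for some $b\in\Z$, so $E$ admits a surjection onto a direct sum of line bundles; the kernel is a vector bundle of smaller rank whose restriction to each $L\in\cL$ is again a direct sum of line bundles (being a subbundle of a split bundle on $\P^d$, hence split — this uses $\Pic(\P^d)=\Z$ and the classification of bundles on lines together with the fact that a sub-line-bundle of $\bigoplus\cO_L(a_i)$ is saturated to some $\cO_L(c)$; more simply, the snake-lemma construction of $E_0$ is fiberwise over $L$ compatible with the splitting). Inducting on the rank, exactly as in the proof of Theorem \ref{thm:criterion} (the base case being $\P^1$ or, here, the trivial rank-one case), we conclude that $E$ itself is a direct sum of line bundles.

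The main obstacle I anticipate is the inductive step: one must check that the kernel of $E\twoheadrightarrow\cO_X(b)^{\oplus k}$ still satisfies the hypothesis of the corollary, i.e. restricts to a direct sum of line bundles on every member $L$ of $\cL$. This is where the strength of the hypothesis (splitting on all of $L$, not just on lines in $L$) is essential: on $\P^d$ the kernel of a surjection between split bundles need not split in general, so one has to argue that the surjection $E|_L\to\cO_L(b)^{\oplus k}$ is, up to automorphism, a projection onto a sub-sum of the given splitting of $E|_L$ — which follows because $\Hom(\cO_L(a_i),\cO_L(b))=0$ unless $a_i=a_1=b$, forcing the map to factor through the $\cO_L(a_1)^{\oplus k}$ summand and there to be an isomorphism after a change of basis. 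Once this compatibility is in place the induction runs without incident.
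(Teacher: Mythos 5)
Your proposal is correct and follows essentially the same route as the paper: run the standard construction of Theorem \ref{thm:criterion}, observe that the splitting of $E|_L$ forces the map $\widetilde{p_1}^x$ to be constant on each linear piece $L'$ of the VMRT (the paper phrases this as the unique extension of surjections $E|_\ell\to\cO_\ell(a_1)$ to $E|_L\to\cO_L(a_1)$, which is cleaner than your ``trivial pullback of $\cO_{\G}(1)$'' formulation but amounts to the same thing), propagate by chain-connectedness, and conclude via \cite[Prop.~1.2]{AW} and induction on the rank. Your explicit verification that the kernel of $E\to\cO_X(b)^{\oplus k}$ again restricts to a direct sum of line bundles on every $L$ (via the vanishing of $\Hom(\cO_L(a_i),\cO_L(a_1))$ for $a_i>a_1$) is a detail the paper leaves implicit, and it is correctly handled.
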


\begin{proof}
We argue as in Theorem \ref{thm:criterion}, and use the same notation. Given a
linear space $L$ of the family $\cL$ and a line $\ell\subset L$, take any point
$x\in \ell$ and denote by $L'$ the linear subspace of the VMRT of $X$ at $x$
corresponding to tangent directions to $L$ at $x$. By hypothesis any surjective
morphism $E|_\ell\to\cO_\ell$ lifts to a unique $E_L\to\cO_L$. In particular the
morphism $\widetilde{p_1}^x:\cM_x= \text{VMRT}_x\to \G(k-1,r-1)$ constructed as in the
theorem is constant on $L'$, and so it is constant on the VMRT.
\end{proof}

\subsection{Uniform vector bundles on Hermitian symmetric
spaces}\label{ssec:hermitian}

We have already applied our criteria to uniform vector bundles on quadrics and
Grassmannians. In this section we show, as an example, how our methods work for
other Hermitian symmetric spaces. We refer the reader to \cite{LM} or 
\cite{Hw} for the description of their VMRT's. 

Let us introduce the following notation. Given a Fano manifold $X$ of
Picard number one and an unsplit covering family of rational curves $\cM$ we
will denote by $u(X,\cM)$, or simply by $u(X)$ if the family $\cM$ is clear, the
maximum positive integer verifying that every uniform vector bundle on $X$ of
rank $\leq u(X)$ is a direct sum of line bundles, and we call it the {\it
splitting threshold for uniform vector bundles} on $X$. If $X$ is a homogeneous
Fano manifold of Picard number one, then $T_X$ is an unsplit homogeneous bundle, 
in particular $u(X)<\dim X$. 

In the following table we show the possible values of $u(X)$ for Hermitian symmetric 
spaces. The last column shows unsplit and uniform vector bundles $F$ of rank $u(X)+1$:

\begin{table}[h]
	\centering
		\begin{tabular}{|c|c|c|c|c|}
		\hline
			$G$&$X$& $VMRT$&$u(X)$&$F$\\\hline
			$\Sl(n+1)$& $\P^n$ &$\P^{n-1}$&$n-1$&$T_{\P^n}$\\
			$\Sl(n+1)$&$\G(k,n)$ ($k\leq n/2$)& 
$\P^{k}\times\P^{n-k-1}$&$k+1$&$\cQ$\\
%$\SO(2k+2),\,k\geq 3$&$\Q^{2k}$&$\Q^{2k-2}$&$\geq 2k-3$\\
%$\SO(2k+1),\,k\geq 3$&$\Q^{2k-1}$&$\Q^{2k-3}$&$\geq 2k-3$\\
			$\SO(2m)$&$\G_Q(m-1,2m-1)$
&$\G(1,m-1)$&$m-1$&$\cQ$\\
			$\Sp(2m)$&$\G_L(m-1,2m-1)$& 
$v_2(\P^{m-1})$&$m-1$&$\cQ$\\
			$\mbox{E}_6$&$\O\P^2\otimes_\C\R$&  $\G_Q(4,9)$&$\geq
5$&\\
			$\mbox{E}_7$&$X$& $\O\P^2\otimes_\C\R$&$\geq
7$&\\
		\hline
		\end{tabular} %\vspace{0.2cm}
	\caption{Splitting threshold for uniform vector bundles on Hermitian
symmetric spaces}
	\label{tab:Hermitian}
\end{table}

The case of $\P^n$ is classical (cf. \cite{S}), but we recover it as follows: 
Theorem \ref{thm:criterion} tells us that $u(\P^n)\geq n-1$ 
and on the other hand the tangent bundle is unsplit and uniform, hence $u(X)$ 
equals $n-1$. In a similar way, Theorem \ref{thm:criterion} provides 
the estimation presented in the table for the symplectic Grassmannian, 
which is sharp by the unsplitting of the universal quotient bundle $\cQ$. 

For the Grassmannian $\G(k,n)$ (for simplicity we have chosen 
here $k\leq n/2$) the value of $u(X)$ follows from Corollary 
\ref{cor:critgrass} and the unsplitting of $\cQ$, again. 

The estimation obtained for the Hermitian symmetric 
spaces corresponding to $\mbox{E}_6$, and $\mbox{E}_7$ has been obtained, 
once we know the cohomology of their VMRT's, by 
applying directly Theorem \ref{thm:criterion}. In the first case, the 
corresponding VMRT is $\G_Q(4,9)$, which verifies (see, for instance, \cite{Di}) that 
$$
H^2(\G_Q(4,9),\C)\cong H^4(\G_Q(4,9),\C)\cong\C\not\cong H^6(\G_Q(4,9),\C).
$$
For the space corresponding to $E_7$ we need to know the cohomology of the Cayley 
plane $\mathcal{C}:=\O\P^2\otimes_\C\R$, that verifies  (cf. \cite{IM}): 
$$H^2(\mathcal{C},\C)\cong H^4(\mathcal{C},\C)\cong 
H^6(\mathcal{C},\C)\cong\C\not\cong H^8(\mathcal{C},\C).$$
Let us remark that the same result is obtained by applying Barth-Larsen 
Theorem (cf. \cite[Thm.~3.2.1]{L}) to the usual embedding of these two 
varieties in a projective space.

Finally, we consider the isotropic grassmannian $\G_Q(m-1,2m-1)$ 
(parametrizing any of the two
families of $\P^{m-1}$'s contained in a quadric of dimension $2m-2$). Note that the
universal quotient bundle on $\G_Q(m-1,2m-1)$ is unsplit and has rank $m$, hence
$u(\G_Q(m-1,2m-1))<m$. Thus the value of $u$
follows from the following proposition:

\begin{prop}\label{prop:quadgrass}
Every uniform vector bundle on
$\G_Q(m-1,2m-1)$ of rank smaller than or equal to $m-1$ is a direct sum of line
bundles.
\end{prop}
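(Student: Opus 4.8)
The plan is to run the proof of Theorem \ref{thm:criterion} and reduce the whole statement to a constancy property of a morphism on the variety of minimal rational tangents. Write $QG=\G_Q(m-1,2m-1)$ and let $E$ be a uniform vector bundle of rank $r\le m-1$ on $QG$. If the splitting type of $E$ is constant, $E$ is a direct sum of line bundles by \cite[Prop.~1.2]{AW}; otherwise, dualizing if necessary, we may assume it equals $(a_1,\dots,a_k,\dots,a_r)$ with $a_1=\dots=a_k<a_{k+1}\le\dots\le a_r$ and $1\le k\le\lfloor r/2\rfloor$. Constructing the family of minimal sections of $E$ along the lines of $QG$ exactly as in Theorem \ref{thm:criterion}, for every $x\in QG$ one gets a morphism $\widetilde{p_1}^{\,x}\colon\cM_x\to\G(k-1,r-1)$, and it suffices to show that $\widetilde{p_1}^{\,x}$ is constant: then its image is a projective subbundle $\P(E_0)\subset\P(E)$, $E_0$ is a quotient of $E$ of constant splitting type on the lines of $QG$, so $E_0\cong\cO(b)^{\oplus k}$ by \cite[Prop.~1.2]{AW}, and one finishes by descending induction on $r$ together with Kodaira vanishing, as in Theorem \ref{thm:criterion}.

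For $QG$ the variety $\cM_x$ is the VMRT $\G(1,m-1)$, the Grassmannian of lines of $\P^{m-1}$. It is chain-connected by the family of its maximal linear subspaces $\Lambda\cong\P^{m-2}$, the loci of lines through a fixed point of $\P^{m-1}$: given two lines $\ell,\ell'$ and points $p\in\ell$, $q\in\ell'$, the chain $\ell\in\Lambda_p\ni\overline{pq}\in\Lambda_q\ni\ell'$ connects them, and consecutive members share a point. If $r\le m-2$, then $\dim\Lambda=m-2\ge r$ and every even Betti number of $\Lambda$ equals $1$, so Lemma \ref{lem:constant} gives that $\widetilde{p_1}^{\,x}|_\Lambda$ is constant for every $\Lambda$; by chain-connectedness $\widetilde{p_1}^{\,x}$ is constant and $E$ splits. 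This settles the case $r\le m-2$ (equivalently $u(QG)\ge m-2$).

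The remaining, delicate case is $r=m-1$, where $\dim\Lambda=m-2=r-1$ falls one unit short of the hypothesis of Lemma \ref{lem:constant}. When $k=1$ the target of $\widetilde{p_1}^{\,x}$ is the projective space $\P^{m-2}$, and constancy is immediate: $\widetilde{p_1}^{\,x\,*}\cO(1)=e\,\sigma_1$ for some $e\ge 0$, but $h^{m-1}=0$ on $\P^{m-2}$ while $\sigma_1^{m-1}\ne 0$ in $H^{2(m-1)}(\G(1,m-1),\C)$ (as $m-1\le 2m-4$ for $m\ge 3$ and $\sigma_1$ is ample), forcing $e=0$. When $k\ge 2$ (so $m\ge 5$ and $k\le\lfloor(m-1)/2\rfloor\le m-3$), assume $\widetilde{p_1}^{\,x}$ non-constant, i.e. $e\ge 1$. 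Here $\widetilde{p_1}^{\,x\,*}\cQ$ is a globally generated quotient of the trivial bundle $\cO^{\oplus(m-1)}$ of rank $k<\dim\Lambda$; the technical heart is to show, by studying the restrictions to the linear subspaces $\Lambda\cong\P^{m-2}$ and comparing splitting types along lines (any two $\Lambda$'s meet in a point, which pins down the splitting type on all lines of $\G(1,m-1)$), that $\widetilde{p_1}^{\,x\,*}\cQ$ has constant splitting type, hence $\widetilde{p_1}^{\,x\,*}\cQ\cong\bigoplus_i\cO_{\G(1,m-1)}(a_i)$ with $a_i\ge 0$ by \cite[Prop.~1.2]{AW}. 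Since $\sum a_i=e\ge 1$, some $a_{i_0}\ge 1$, so $\cO_{\G(1,m-1)}(a_{i_0})$ is a quotient of $\cO^{\oplus(m-1)}$; but such a quotient yields a finite morphism $\G(1,m-1)\to\P^{m-2}$ (its pull-back of $\cO(1)$ is ample), contradicting $\dim\G(1,m-1)=2m-4>m-2$. Thus $e=0$ and $\widetilde{p_1}^{\,x}$ is constant.

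The main obstacle is exactly this last case $r=m-1$: the VMRT of $QG$ is only chain-connected by linear spaces of dimension $m-2$, so Corollary \ref{cor:criterion2} alone yields nothing past $u(QG)\ge m-2$, and the extra unit must be gained by a hands-on analysis of $\widetilde{p_1}^{\,x}$. The two facts that make it go through are that for $r=m-1$ the multiplicity $k$ is at most $\lfloor(m-1)/2\rfloor$ --- which keeps the target Grassmannian small and $\widetilde{p_1}^{\,x\,*}\cQ$ of rank below $\dim\G(1,m-1)$ --- and that $\widetilde{p_1}^{\,x\,*}\cQ$ is a quotient of a rank-$(m-1)$ trivial bundle, while $\cO_{\G(1,m-1)}(1)$ is far too positive to be such a quotient. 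Together with the unsplitting of the rank-$m$ universal quotient bundle this gives $u(QG)=m-1$.
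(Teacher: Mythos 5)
Your reduction to the constancy of $\widetilde{p_1}^{\,x}\colon\cM_x\cong\G(1,m-1)\to\G(k-1,r-1)$ is exactly the paper's strategy, and your handling of $r\le m-2$ (chain-connection by the $\P^{m-2}$'s of lines through a point plus Lemma \ref{lem:constant}, i.e. Corollary \ref{cor:criterion2}) and of $r=m-1$, $k=1$ is correct. But the case you yourself label ``the technical heart'', namely $r=m-1$ and $k\ge 2$, is precisely the content of the proposition, and there your argument has a genuine gap in two places. First, the assertion that $\widetilde{p_1}^{\,x\,*}\cQ$ has splitting type independent of the line of $\G(1,m-1)$ is never proved: you only say it should follow ``by studying the restrictions to the $\Lambda$'s and comparing splitting types along lines'', but nothing you write shows that the restriction to a fixed $\Lambda\cong\P^{m-2}$ is uniform, nor how two $\Lambda$'s meeting in a single point would force equality of splitting types on lines lying in different $\Lambda$'s. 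Second, even granting that uniformity, the deduction ``uniform, hence $\bigoplus_i\cO_{\G(1,m-1)}(a_i)$ by \cite[Prop.~1.2]{AW}'' is not valid: that result applies only when the splitting type has all entries equal (this is how it is used in the proof of Theorem \ref{thm:criterion}), and uniform bundles of rank $\ge 2$ on $\G(1,m-1)$ need not split --- the rank-two universal quotient bundle of $\G(1,m-1)$ is uniform, globally generated and unsplit, and its rank lies in the range $2,\dots,\lfloor (m-1)/2\rfloor$ you are considering; no splitting classification of uniform bundles in that range on $\G(1,m-1)$ is available to you (indeed, proving such statements is what this circle of results is about). Your concluding step (an ample line bundle cannot be a quotient of $\cO^{\oplus(m-1)}$ on the $(2m-4)$-dimensional $\G(1,m-1)$) is fine, but it rests on the two unsupported steps above.

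For comparison, the paper closes exactly this case with Lemma \ref{lem:quadricgrass1}: every morphism $\phi\colon\G(1,n)\to\G(k-1,n-1)$ is constant, proved by restricting to a linear $\P^{n-1}\subset\G(1,n)$ to see that the top Chern classes $c_k$, $d_{n-k}$ of the restricted pullbacks are nonzero, deducing $C_k\cdot D_{n-k}=0$ from the Whitney relation on $\G(1,n)$, and contradicting this through the nefness of $\phi^*\cQ$ and $\phi^*\cS$ together with Pieri and Littlewood--Richardson positivity of Schubert classes. Some argument of this Chern-class/positivity type (or an actual proof of your uniformity claim, followed by an analysis ruling out the unsplit uniform possibilities) is what is missing from your proposal.
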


The proof of this result follows from Lemma \ref{lem:quadricgrass1} below, in 
the same way as Theorem \ref{thm:criterion} follows from
Lemma \ref{lem:constant}. 

\begin{lem}\label{lem:quadricgrass1}
There are no nonconstant maps from $\G(1,n)$ to $\G(k-1,n-1)$, for every
$k\in\{1,\dots,n-1\}$. 
\end{lem}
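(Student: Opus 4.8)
The plan is to mimic the argument of Lemma~\ref{lem:constant}, replacing the cohomological input by a direct Chern class computation on $\G(1,n)$, whose integral cohomology is completely understood. Let $\varphi\colon\G(1,n)\to\G(k-1,n-1)$ be a morphism; I want to show $\deg\varphi^*\cO_{\G(k-1,n-1)}(1)=0$, which forces $\varphi$ to be constant since $\cO(1)$ is ample. Pulling back the universal exact sequence on $\G(k-1,n-1)$ gives
$$0\to\varphi^*\cS^\vee\to\cO^{\oplus n}\to\varphi^*\cQ\to 0,$$
hence the Whitney relation $c(\varphi^*\cQ)\cdot c(\varphi^*\cS^\vee)=1$ in $H^*(\G(1,n),\Z)$. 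Writing $c_i=c_i(\varphi^*\cQ)$ and $d_j=c_j(\varphi^*\cS^\vee)$, the degree-one part of this identity reads $c_1+d_1=0$, so it suffices to show $c_1=0$ (equivalently $d_1=0$).

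The key point is that $H^2(\G(1,n),\Z)\cong\Z$ is generated by the Schubert class $\sigma_1$ (the hyperplane class under the Plücker embedding), so $c_1=a\sigma_1$ for some integer $a$, and I must rule out $a\neq 0$. The obstruction to a purely formal argument is that on $\G(1,n)$ the relation $c(\varphi^*\cQ)c(\varphi^*\cS^\vee)=1$ need not force all higher $c_i$ to vanish the way it does when $\dim M\ge r$: here $\dim\G(1,n)=2(n-1)$, which is comfortably large, so in fact the same linear-algebra argument as in Lemma~\ref{lem:constant} does apply once we know the relevant cohomology groups $H^{2s}(\G(1,n),\C)$ are spanned by powers of $\sigma_1$ — but they are \emph{not} one-dimensional in general, so I cannot simply invoke Lemma~\ref{lem:constant} verbatim. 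Instead I would argue as follows. Suppose $c_1=a\sigma_1$ with $a>0$ (after dualizing $\varphi$ if necessary, i.e. composing with the isomorphism $\G(k-1,n-1)\cong\G(n-k-1,n-1)$, one may assume $a\ge 0$; and $a=0$ is what we want). The bundle $\varphi^*\cQ$ is globally generated of rank $k$ with $c_1=a\sigma_1$. Restrict everything to a line $\ell\subset\G(1,n)$ (a curve of degree one in the Plücker embedding): $\varphi^*\cQ|_\ell$ is globally generated of degree $a\ge 1$, and similarly for any line. The heart of the matter is to derive a contradiction from the existence of a globally generated quotient of $\cO^{\oplus n}$ with these numerical invariants on $\G(1,n)$ — concretely, I expect to use that $\varphi$ would then be a finite morphism onto its image, and to compare the top self-intersection $c_1^{2(n-1)}=a^{2(n-1)}\sigma_1^{2(n-1)}=a^{2(n-1)}\deg\G(1,n)$ with the fact that $c_1(\varphi^*\cQ)=\varphi^*\sigma_1'$ is a pullback, so its top power equals $\deg(\varphi)\cdot(\sigma_1')^{2(n-1)}$ evaluated on the image, which is positive but bounded — this alone is not yet a contradiction, so the real input must be sharper.

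The cleanest route, and the one I would ultimately carry out, is the linear-algebra argument of Lemma~\ref{lem:constant} adapted to the subring $R\subset H^*(\G(1,n),\C)$ generated by $\sigma_1$. All Chern classes $c_i(\varphi^*\cQ)$ and $c_j(\varphi^*\cS^\vee)$ are \emph{a priori} only in $H^*(\G(1,n),\C)$, but I claim they lie in $R$: indeed, $c_1\in H^2=\C\sigma_1$ lies in $R$; and then the Whitney identity, read degree by degree as a system expressing the $d_j$ and the higher $c_i$ recursively in terms of $c_1$ and lower terms, shows inductively that every $c_i$ and $d_j$ lies in $R$ — wait, that recursion has the same under-determinacy issue. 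So instead I restrict to a general $\P^1=\ell\subset\G(1,n)$; on $\ell$, $\varphi^*\cQ|_\ell=\bigoplus\cO(e_t)$ with $\sum e_t=a$ and all $e_t\ge 0$ (global generation), and $\varphi^*\cS^\vee|_\ell=\bigoplus\cO(f_s)$ with $\sum f_s=-a\le 0$ and all $f_s\le 0$ (it is a subsheaf of $\cO^{\oplus n}$); but $\varphi^*\cS^\vee|_\ell$ is also globally generated, being $\varphi^*$ of a globally generated bundle on $\G(k-1,n-1)$, forcing all $f_s\ge 0$, hence $f_s=0$ for all $s$, hence $a=0$. This is the contradiction, and I expect the verification that $\cS^\vee$ on $\G(k-1,n-1)$ is globally generated (equivalently that $\cS$ has $-c_1$ globally generated, i.e. $\cO(1)=\det\cS^\vee$ is very ample and $\cS^\vee$ is a quotient of the trivial bundle on the dual Grassmannian picture) — namely that $\cQ^\vee$ is \emph{not} globally generated, it is $\cS^\vee$ that sits inside $\cO^{\oplus n}$ — to be the only subtle point; once $c_1=0$ on a general line, hence on every line by uniformity of $\varphi^*\cO(1)$ via Andreatta--Wiśniewski or simply because $\deg$ is constant on an irreducible family covering $\G(1,n)$, we conclude $\deg\varphi^*\cO_{\G(k-1,n-1)}(1)=0$ and $\varphi$ is constant.
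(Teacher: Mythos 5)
There is a genuine gap, and it sits exactly where you flagged your "only subtle point": the claim that $\varphi^*\cS^\vee|_\ell$ is globally generated is false. On $\G(k-1,n-1)$ the universal subbundle $\cS^\vee$ is a \emph{sub}bundle of $\cO^{\oplus n}$ with $c_1(\cS^\vee)=-\sigma_1\neq 0$; it is never globally generated (for instance, for $k=n-1$ the target is a projective space and $\cS^\vee\cong\cO(-1)$). It is the dual $\cS$ (equivalently, the universal quotient of the dual Grassmannian) that is globally generated, and pulling that back only gives you $f_s\le 0$ again, not $f_s\ge 0$. Without the false positivity input, your restriction-to-a-line computation yields only $a\ge 0$ and $\sum f_s=-a\le 0$, which is no contradiction. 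A quick sanity check shows the argument cannot be repaired in that form: as written it uses nothing about the source beyond the existence of a rational curve, so it would prove that \emph{every} morphism from a variety containing a line to a Grassmannian is constant -- contradicted already by the identity map of $\G(k-1,n-1)$. The earlier degree-comparison sketch you abandon ("not yet a contradiction") is indeed inconclusive, so no complete argument survives in the proposal.

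The actual proof has to exploit the specific source $\G(1,n)$ and genuine positivity of Schubert coefficients, which is what the paper does: (a) restrict $\phi$ to a maximal linear space $\P^{n-1}\subset\G(1,n)$, where the even cohomology \emph{is} one-dimensional, and run the matrix argument of Lemma \ref{lem:constant} there to conclude that if $\psi$ is nonconstant then $c_k(\psi^*\cQ)$ and $c_{n-k}(\psi^*\cS)$ are both nonzero; (b) on all of $\G(1,n)$, the degree-$n$ piece of the Whitney relation forces $C_k\cdot D_{n-k}=0$; (c) since $\phi^*\cQ$ and $\phi^*\cS$ are nef, their Chern classes are nonnegative combinations of Schubert cycles (Bloch--Gieseker/Fulton--Lazarsfeld positivity), and Pieri plus Littlewood--Richardson then show that $C_k\cdot D_{n-k}=0$ forces the leading coefficients $a_0b_0$ to vanish, contradicting (a). Your write-up never reaches any of these steps, so the proposal as it stands does not prove the lemma.
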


\begin{proof}
Assume we have a nonconstant morphism $\phi:\G(1,n)\to\G(k-1,n-1)$, and denote
by $\psi$ its restriction to a linear space $\P^{n-1}\subset\G(1,n)$. Note that
since $\phi$ is nonconstant, then $\psi$ is nonconstant, too. Denote by
$c_1,\dots,c_k$ and $d_1,\dots,d_{n-k}$ the Chern classes of $\psi^*\cQ$ and
$\psi^*\cS$, respectively, and by $C_1,\dots,C_k$ and $D_1,\dots,D_{n-k}$ the
Chern classes of $\phi^*\cQ$ and $\phi^*\cS$, respectively.

Arguing as in Lemma \ref{lem:constant} we obtain an equation
$$
\left(\begin{array}{ccccc}
c_1&1&\cdots&\cdots&0\\
\vdots&c_1&\ddots& &\vdots\\
c_{k-1}&\vdots&\ddots&\ddots&\vdots\\
c_k&c_{k-1}& &\ddots&1\\
\vdots&c_k&\ddots& &c_1\\
\vdots&\vdots&\ddots&\ddots& \vdots\\
0&0&\cdots&c_{k} &c_{k-1}\\
\end{array}\right)
\left(\begin{array}{c}
1\\-d_1\\ \vdots\\ \vdots\\(-1)^{n-k}d_{n-k}
\end{array}\right)=0
$$
Note that we may assume that $c_k$ and $d_{n-k}$ are different from zero,
otherwise the above equation would imply $c_1=0$, contradicting that $\psi$ is
nonconstant.

On the other side, a similar argumentation with $\phi$ tells us that $C_k\cdot
D_{n-k}=0$. In order to show that this contradicts that $c_k$ and $d_{n-k}$ are
non-zero, we may use some Schubert calculus. Let us fix some notation first.

We will denote by $Z^k_i$ the Schubert cycle in $\G(1,n)$ of codimension $k$
determined by the partition $(k-i,i)$, i.e. the cohomology class of the set
parametrizing lines of $\P^{m}$ contained in a fixed $\P^{m-i}$ and meeting a
fixed $\P^{m-1-k+i}\subset\P^{m-i}$.

With this notation we may write
$$
C_k=\sum_{i=0}^{\lfloor k/2\rfloor}a_iZ^k_i,\quad D_{n-k}=\sum_{j=0}^{\lfloor (n-k)/2\rfloor}b_jZ^{n-k}_j,
$$
where the $a_i$'s and the $b_j$'s are non negative integers by the nefness of
$\cQ$ and $\cS$, cf. \cite{BG} and \cite[8.2]{L}.
Pieri's formula tells us that
$$c_k=C_k\cdot\P^{n-1}=a_0Z_k^{n-1-k} \mbox{ and }
d_{n-k}=D_{n-k}\cdot\P^{n-1}=b_0Z_{n-k}^{2n-1-k},$$
hence $a_0b_0\neq 0$. But on the other side:
$0=C_k\cdot D_{n-k}=a_0b_0Z_0^kZ^{n-k}_0+\dots$, where all the elements of this
sum are linear combination of Schubert cycles with non negative coefficients, by
the Littlewood-Richardson formula. In particular it follows that $a_0b_0=0$, a
contradiction.
\end{proof}

%%%%%%%%%%%%%%%%%%%%%%%%%%%%%%%%%%%%%%%%%%%%%%%%%%%%%%%%%%%%

\section{Uniform vector bundles on Grassmannians}\label{sec:uniform}
In this section we use some geometric arguments to go one step further from
Theorem \ref{thm:criterion} in the classification of uniform vector bundles of
low rank on Grassmannians. This result was previously obtained by Guyot,
\cite{G}.

Applying Corollary \ref{cor:critgrass} to Grassmannians we already know that
the
only uniform vector bundles on $\G(k,n)$ of rank smaller than $\min\{k+1,n-k\}$
are direct sums of line bundles. In this section we prove that, up to twist by a line bundle, the only unsplit
uniform vector bundle of rank equal to $\min\{k+1,n-k\}$ is the universal bundle
of smaller rank. Summing up we obtain the following:

\begin{thm}\label{thm:uniformgrass}
Let $G=\G(k,n)$ be the Grassmannian variety
parametrizing linear projective subspaces of dimension $k$ in $\P^n$, $k\leq
n-k-1$, and let $E$ be a uniform vector bundle on $G$ of rank $r\leq k+1$. Then
$E$ is a direct sum of line bundles unless it is a twist of either the universal
quotient bundle $\cQ$ or its dual $\cQ^\vee$.
\end{thm}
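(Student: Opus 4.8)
The plan is to run an induction on $r$, the base case $r<\min\{k+1,n-k\}=k+1$ being already settled by Corollary \ref{cor:critgrass}. So assume $r=k+1$ and that $E$ is uniform and unsplit; I must show $E$ is a twist of $\cQ$ or $\cQ^\vee$. As in the proof of Theorem \ref{thm:criterion}, after a twist and possibly dualizing I may write the splitting type of $E$ on a line $\ell\in\cM$ as $(a_1,\dots,a_r)$ with $a_1=\dots=a_m<a_{m+1}\le\dots\le a_r$, and the first step is to build on $\P(E)$ the family of minimal sections, i.e. the bundle $F$ with $\cM_E=\P(F)$ and the incidence diagram relating $\cU_E$, $\cU$, $X$ and $\cM_E$. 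The key object is again the morphism $\widetilde{p_1}^x\colon\cM_x\to\G(m-1,\P(E_x))$ attached to each $x\in X$, where now $\cM_x$ is the Segre variety $\P^k\times\P^{n-k-1}$ (the VMRT of $\G(k,n)$). The first new ingredient is a classification of nonconstant morphisms $\P^k\times\P^{n-k-1}\to\G(m-1,k)$: since $m\le\lfloor r/2\rfloor\le\lfloor (k+1)/2\rfloor$, such a morphism, if nonconstant, has image a linear $\P^k$ and pulls back $\cO(1)$ to a sum of $\cO(1,0)$'s on the first factor (or $\cO(0,1)$'s on the second) — this is exactly the Grassmannian analogue of Lemma \ref{lem:constant}, proved by the same Chern-class bookkeeping together with the chain-connectedness of $\P^k\times\P^{n-k-1}$ by one of its two rulings.

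The heart of the argument is to analyze the remaining case, where $\widetilde{p_1}^x$ is \emph{not} constant, so that $m=1$, $a_1<a_2=\dots=a_r$ (the "generic splitting type" is $(a_1,a_2,\dots,a_2)$ with a single jump), and $\widetilde{p_1}^x$ realizes one of the two projections of the Segre variety $\cM_x\iso\P^k\times\P^{n-k-1}$ onto a linear $\P^k$ or $\P^{n-k-1}$ sitting inside $\P(E_x)=\P^{k}$. Because $r=k+1$ this forces the target to be the whole $\P(E_x)$ and $\widetilde{p_1}^x$ to be an isomorphism onto it identifying $\P(E_x)$ with the $\P^k$-factor of the VMRT; thus $E^\vee$ carries, fiberwise over $x$, a tautological identification of $\P(E_x)$ with a family of lines through $x$ in $X$. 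Concretely, the sub-line-bundle $\cO(a_1)\hookrightarrow E|_\ell$ sweeping out the minimal sections produces, after the standard construction, a quotient $E\twoheadrightarrow E_0$ or a sub $E_1\hookrightarrow E$ of rank one on each line which now varies over $\cM_x$; the data of this variation is precisely a map to the Grassmannian of the relative tangent directions, and on $\G(k,n)$ the universal family of lines is $\F(k-1,k+1,n)$, whose fibers over a point recover $\P^k\times\P^{n-k-1}$. Matching the two projections of $\F(k-1,k+1,n)$ to $\G(k-1,n)$ and $\G(k+1,n)$ (the $\beta$- and $\alpha$-spaces of Section \ref{sec:conventions}) shows that $\P(E)$ must be pulled back from one of these two incidence varieties, and then a direct identification of the restriction of $E$ to $\alpha$- or $\beta$-spaces with the relevant universal bundle forces $E\iso\cQ\otimes\cO(c)$ or $E\iso\cQ^\vee\otimes\cO(c)$ (here one uses that $\cS^\vee$ has rank $n-k-1\ge k+1=r$, so the only way to get a rank-$(k+1)$ universal bundle is $\cQ$ or its dual — and when $k=n-k-1$ both options genuinely occur, which is why the statement lists $\cQ$ and $\cQ^\vee$ on equal footing).

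I expect the main obstacle to be the last step: passing from the \emph{pointwise} geometric data — the identification, for each $x$, of $\P(E_x)$ with a factor of the VMRT — to a \emph{global} statement that $\P(E)$ is the pullback of the universal $\P^k$-bundle under $G\to\G(k+1,n)$ (or $G\to\G(k-1,n)$). This requires showing the pointwise identifications are compatible as $x$ varies, i.e. that the rank-one quotient of $E$ on each line $\ell$ extends to a globally defined sub- or quotient-sheaf with the right Chern class; the natural tool is again \cite[Prop.~1.2]{AW} applied to an auxiliary bundle of constant splitting type extracted from the standard construction, combined with the fact that $\Pic(\F(k-1,k+1,n))$ is generated by the pullbacks from $\G(k-1,n)$ and $\G(k+1,n)$, so that a line bundle on the total space of minimal sections which is relatively $\cO(1)$ descends to one of the two projections. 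Once the descent is in place, computing $c_1$ and $c_2$ of $E$ and comparing with those of twists of $\cQ$ and $\cQ^\vee$ — using that on $\G(k,n)$ a uniform bundle is determined by finitely many Chern numbers together with its behaviour on lines — pins down $E$ uniquely.
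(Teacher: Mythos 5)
There is a genuine gap, and it sits exactly at the point where the borderline rank $r=k+1$ stops being a formal consequence of the standard construction. Your plan hinges on a ``Grassmannian analogue of Lemma \ref{lem:constant}'' classifying nonconstant morphisms $\P^k\times\P^{n-k-1}\to\G(m-1,k)$, which you assert follows from ``the same Chern-class bookkeeping.'' It does not: Lemma \ref{lem:constant} needs $\dim M\geq r$ to get the full polynomial identity, and the rulings of the VMRT have dimension $k=r-1$, while the Segre itself has $h^2=2$, so the bookkeeping is genuinely different; moreover your statement of the conclusion is wrong as written ($\G(m-1,k)$ contains no linear $\P^k$ when $2\leq m\leq k-1$), and for $m=1$ a nonconstant morphism $\P^k\times\P^{n-k-1}\to\P^k=\P(E_x)$ need not be a linear projection of the Segre (it could, a priori, be a projection composed with a finite self-map of $\P^k$, or have positive degree on both rulings when $k=n-k-1$). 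Ruling these out is the actual content of the theorem and cannot be done fiberwise by Chern classes alone; it requires the extra geometric input coming from $E$ being uniform. The paper supplies precisely this: it restricts $E$ to the $\alpha$- and $\beta$-spaces and invokes the classification of uniform bundles of rank $\leq\dim$ on projective spaces (split, or a twist of $T$ or $\Omega$, and determined there by Chern classes), splits into three cases, kills the case where $E$ restricts to twists of $\Omega$ on both families by a fixed-point argument on the two pencils of lines through a point (Claim \ref{claim:TT}, using Lemma \ref{lem:familiesgrass}), and in the remaining unsplit case builds a morphism $\psi:\F(k-1,k,n)\to\P(E)$ from the minimal sections over $\beta$-spaces and proves it is injective, hence an isomorphism by dimension count, using $E|_L\cong\Omega_L(2)$ on $\alpha$-spaces; this gives $\P(E)\cong\P(\cQ^\vee(1))$ as $G$-schemes. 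Your plan contains no substitute for either the fixed-point step or the injectivity step.

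Two further points. First, your closing argument — that ``on $\G(k,n)$ a uniform bundle is determined by finitely many Chern numbers together with its behaviour on lines'' — is unjustified and essentially circular: such a determinacy statement is available on $\P^n$ for rank $\leq n$ only because of the classification there, and on the Grassmannian it is (a consequence of) what you are trying to prove. Second, the globalization you propose is phrased around $\F(k-1,k+1,n)$ and a ``pullback of the universal $\P^k$-bundle under $G\to\G(k+1,n)$,'' but there is no morphism from $G$ to $\G(k\pm1,n)$, and $\F(k-1,k+1,n)$ has the wrong dimension to be compared with $\P(E)$; the correct object is the flag $\F(k-1,k,n)\cong\P(\cQ^\vee(1))$, whose projection to $G$ has fibers $\P^k$ and whose total space has exactly $\dim\P(E)$. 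So while your overall target (identify $\P(E)$ with a universal family) matches the paper's, the steps that would make it work — constancy for $m\geq 2$, linearity/injectivity for $m=1$, exclusion of the doubly-nonconstant case when $k=n-k-1$, and the descent to the right flag variety — are either missing or asserted in a form that is not correct.
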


By assumption $E$ is
uniform of rank $r\leq k+1$ on $G$, hence we may restrict to $\alpha$ and
$\beta$-spaces and use the classification of uniform vector bundles on
projective spaces \cite[Thm.~3.2.3]{OSS}. Note that uniform vector bundles on
projective spaces of rank smaller than or equal to the dimension are completely
determined by their Chern classes. This implies that the restrictions of $E$ to
two $\alpha$ (respectively $\beta$)-spaces are isomorphic, and we may
distinguish three cases:
\begin{enumerate}
\item The restriction of $E$ to every $\alpha$ and $\beta$-space is a direct sum
of line bundles. This case has been already considered in Corollary
\ref{cor:uniformlinear}.
\item The restriction $E|_L$ is a twist of $T_L$ or $\Omega_L$ for every
$\alpha$-space $L$ and $E|_M$ is a direct sum of line bundles for every
$\beta$-space $M$.
\item $k=r-k-1$ and the restrictions of $E$ to every $\alpha$ or $\beta$-space
$L$ are isomorphic to a twist of $T_L$ or $\Omega_L$.
\end{enumerate}

Our first task will be discarding the third case.

\begin{claim}\label{claim:TT}
 With the same notation as above, the restriction of $E$ to either an $\alpha$
or a $\beta$-space is a direct sum of line bundles.
\end{claim}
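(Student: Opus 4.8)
We are in case (3): $k = r - k - 1$, so $r = 2k+2$, and the restriction of $E$ to every $\alpha$-space $L \cong \P^k$ and every $\beta$-space $M \cong \P^k$ is, up to a twist by a line bundle, either $T_{\P^k}$ or $\Omega_{\P^k}$. (These are the only non-split uniform bundles of rank $k$ on $\P^k$, up to twist, by the classical classification of low-rank uniform bundles on projective space; note here the rank of $E$ matches the dimension of the linear space, which is the borderline case.) I want to derive a contradiction, showing this configuration cannot occur, so that in fact $E$ restricted to $\alpha$- or $\beta$-spaces splits.

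**The plan.** The key point is to confront the two restrictions along a line $\ell \subset G$ that lies in both an $\alpha$-space $L$ and a $\beta$-space $M$ — such lines exist, since through a general line of $G$ there pass both an $\alpha$- and a $\beta$-space (a line in $\F(k-1,k+1,n)$ corresponds to a flag $L_{k-1} \subset L_{k+1}$, the $\alpha$-space through it is $\{ \Lambda : L_{k-1} \subset \Lambda \subset L_{k+1}, \dim \Lambda = k \}$-type data... more precisely the $\alpha$-space is the $\P^{k+1}$ of $k$-planes in a fixed $L_{k+1}$ containing... and the $\beta$-space is the $\P^{n-k}$ through $\ell$). The splitting type of $E$ along $\ell$ is forced by the restriction to $L$: for $T_{\P^k}|_\ell$ it is $(2, 1, \dots, 1)$, and for $\Omega_{\P^k}|_\ell$ it is $(-1, \dots, -1, -2)$ (up to the common twist). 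But the same splitting type must also be read off from $E|_M$, which is likewise a twist of $T_{\P^k}$ or $\Omega_{\P^k}$. So far this is consistent. To get traction I would instead compute a Chern-class obstruction: the total Chern class of $E$ restricted to an $\alpha$-space is $c(T_{\P^k}(a)) = (1 + (a+1)h)^{k+1}/(1+ah)$ (or the $\Omega$ analogue), and similarly on a $\beta$-space; comparing these with the global Chern classes of $E$ via the known structure of $H^*(G,\Z)$ and the restriction maps to the Schubert classes represented by $\alpha$- and $\beta$-spaces should produce an incompatibility. Concretely, $c_2(E)$ restricted to a line class, and the relation $c_1(E|_L) = c_1(E|_M)$ together with $c_1(T_{\P^k}(a)) = (k+1)(a+1) - a = (k+1)a + k + 1 - a$, pin down $a$ and the sign ($T$ vs $\Omega$) on each family; the contradiction arises because the two families meet and the induced data on the common line cannot simultaneously come from a "$T$-type" bundle in one direction and impose the matching VMRT-geometry in the other.

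**Carrying it out.** First I would fix notation: write $H^*(G)$ in terms of special Schubert classes $\sigma_1$ (the hyperplane, pulled back under Plücker) and the classes dual to $\alpha$- and $\beta$-spaces; record the restriction homomorphisms $H^*(G) \to H^*(L)$ and $H^*(G) \to H^*(M)$, which send $\sigma_1$ to the hyperplane class $h$ on each $\P^k$. Second, I would use the hypothesis of case (3) to write $c(E|_L)$ and $c(E|_M)$ explicitly as twists of $c(T_{\P^k})$ or $c(\Omega_{\P^k})$, introducing unknowns $a, b \in \Z$ and a choice of sign in each case. Third, since $E$ is a global bundle, its Chern classes $c_i(E) \in H^{2i}(G)$ restrict compatibly; matching $c_1$ and $c_2$ (the latter tested against a line in the common $\alpha \cap \beta$ locus, using that $\ell \cdot$ [line class] is computed the same way inside $L$ and inside $M$) gives a small system of equations. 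Fourth — and this is where I expect the real obstacle — I would need to extract a genuine contradiction rather than just a solvable system; the resolution should be that $T_{\P^k}$ and $\Omega_{\P^k}$ have \emph{opposite} behaviour with respect to the two rulings (a "positive" curve class in the $L$-direction versus the $M$-direction), so that no consistent choice of twists and signs exists. If the pure Chern-class bookkeeping is not decisive, the fallback is the geometric argument of Theorem \ref{thm:criterion}: the construction of the minimal-section subbundle $E_0 \subset E$ along the family of lines must be compatible with both the $\alpha$- and the $\beta$-restrictions, and $T_{\P^k}$-type restrictions produce a minimal-section subbundle of one rank while $\Omega_{\P^k}$-type restrictions force a different one, contradicting that $E_0$ is well-defined globally. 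Either route closes the case, establishing the claim that $E|_L$ (for $L$ an $\alpha$- or $\beta$-space) is a direct sum of line bundles.
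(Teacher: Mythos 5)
Your two proposed routes both fail to close case (3), and the mechanism that actually closes it is absent from your outline. First, the Chern-class route cannot give a contradiction. In case (3) one has (correcting the bookkeeping) $n=2k+1$, $r=k+1$, and both $\alpha$- and $\beta$-spaces are $\P^{k+1}$'s, not $\P^k$'s. The group $H^4(G,\Z)$ is spanned by two Schubert classes, one of which restricts to $h^2$ on $\alpha$-spaces and to $0$ on $\beta$-spaces while the other does the opposite (and similarly in higher degrees, using $c_i$ of the two universal bundles); hence prescribing the restrictions $c_i(E|_L)$ and $c_i(E|_M)$ merely determines the coordinates of $c_i(E)$ and never produces an inconsistency. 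Worse for your plan, there is no ``$T$ versus $\Omega$ clash'' to exploit: for $k\geq 2$, comparing the splitting type on a line computed from $E|_L$ and from $E|_M$ (uniformity) already forces the same type and the same twist on both families, and for $k=1$ one has $T_{\P^2}\cong\Omega_{\P^2}(3)$ anyway. So the case one must exclude is, after normalization, $E|_L\cong\Omega_L(2)$ for every $\alpha$- and every $\beta$-space, which is numerically consistent in all degrees — exactly the ``solvable system'' you feared. Your fallback also misidentifies the obstruction: the minimal piece of the splitting type is read off $E|_\ell$ and is intrinsic to $E$, so there is no rank mismatch for a would-be subbundle $E_0$; what goes wrong in case (3) is that the map $\widetilde{p_1}^x:\cM_x\cong\P^k\times\P^k\to\P(E_x)$ of Theorem \ref{thm:criterion} is non-constant (it is a linear isomorphism on each family of linear spaces of the VMRT), so the standard construction by itself yields nothing.

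The missing idea is the incidence/fixed-point argument the paper uses. Take two $\alpha$-spaces $L_1,L_2$ through a point $g$; by Lemma \ref{lem:familiesgrass} (the VMRT at $g$ is the Segre $\P^k\times\P^k$) there is a linear isomorphism $\phi$ between the spaces $P_1,P_2$ of lines through $g$ in $L_1,L_2$ such that $\ell$ and $\phi(\ell)$ always lie in a common $\beta$-space. Since $E|_{L_i}\cong\Omega_{L_i}(2)$, the unique trivial quotient of $E|_\ell$ gives linear isomorphisms $\psi_i:P_i\to\P(E_g)$, and the automorphism $\psi^{-1}\circ\phi$ of $P_1\cong\P^k$ has a fixed point; this produces two distinct lines $\ell,\phi(\ell)$ through $g$ in a common $\beta$-space $M$ whose associated points of $\P(E_g)$ coincide. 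Because the same construction on $M$ (again a twist of $\Omega_M$) is injective on lines through $g$, this is the desired contradiction. Nothing in your write-up supplies this, or any substitute, incidence argument, so case (3) is not actually excluded by your proposal.
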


\begin{proof}
On the contrary, we will assume that $k=r-k-1$ and the restrictions of $E$ to
every $\alpha$ and $\beta$-space
are unsplit. By the classification of uniform vector bundles of rank $k$ on 
$\P^k$ (cf. \cite{EHS}) we may assume, after dualizing and/or
twisting $E$ with an appropriate line bundle $\cO_G(a)$, that
$E|_L$ is isomorphic to $\Omega_L(2)$, for every $\alpha$ or $\beta$-space $L$.
The main idea of the proof is the following: For every point $g\in G$ and every 
$\alpha$ or $\beta$-space containing it we obtain, in the same way as in the 
proof of Theorem \ref{thm:criterion} a map from $\P^{k-1}$ to $\P(E_g)$ which 
is an isomorphism by hypothesis. We will show that the existence of these 
isomorphisms for every $\alpha$ and $\beta$-space passing by $g$ leads to 
a contradiction. 

 Let us consider two $\alpha$-spaces $L_1$ and $L_2$ meeting at one point $g$,
and denote by $P_1$ and $P_2$ the projective spaces parametrizing lines through
$g$ in $L_1$ and $L_2$, respectively. By Lemma \ref{lem:familiesgrass} below, there is
an isomorphism $\phi:P_1\to P_2$, verifying that every pair of lines $r$, $\phi(r)$ lie on a $\beta$-space.

On the other hand, consider the restriction of $E$ to any line $\ell_i\subset
L_i$ passing by $g$. Since $E|_{\ell_i}$ is isomorphic to
$\Omega_{L_i}(2)|_{\ell_i}\cong\cO\oplus\cO(1)^{\oplus k}$, the
unique surjective map from it onto $\cO$ determines a point
$\psi_i(\ell_i)\in\P(E_g)$. In fact, since $E_g\cong\Omega_{L_i,g}$
then the map $\psi_i:P_i\to\P(E_g)$ is a linear isomorphism. The map
$\psi:=\psi_2^{-1}\circ\psi_1$ gives a second isomorphism from $P_1$ to $P_2$,
that we want to study together with $\phi$.

A fixed point for $\psi^{-1}\circ\phi$ provides a line $\ell\in P_1$ verifying
$\phi(\ell)=\psi(\ell)$, i.e. $\psi_1(\ell)=\psi_2(\phi(\ell))$. But $\ell$
and $\phi(\ell)$ are contained in a $\beta$-space $M$. In the same way as for
$L_i's$, the minimal sections over lines in $M$ through $g$ provides a linear
isomorphism between the set of lines through $g$ in $M$ and
$\P(E_g)$, contradicting that $\psi_1(\ell)=\psi_2(\phi(\ell))$.
\end{proof}

We have made use of the following lemma, which is a 
straightfoward consequence of the fact that the VMRT of 
$G$ at any point $g$ is isomorphic to the Segre variety 
$\P^{k}\times\P^{n-k-1}$, whose maximal linear spaces
correspond to the $\alpha$ and $\beta$-spaces in $G$ through $g$.

\begin{lem}\label{lem:familiesgrass}
 With the same notation as above, let $L_1$ and $L_2$ be two $\alpha$ 
 (resp. $\beta$)-spaces meeting at one point $g$. Given any line 
 $\ell_1\subset L_1$ passing by $g$, there exists a
$\beta$ (resp. $\alpha$)-space $M$ meeting $L_1$ in $\ell_1$ and $L_2$ in 
a line $\ell_2$. This construction provides a linear isomorphism $\phi$ between 
the set of lines in $L_1$ by $g$ and the set of lines in $L_2$ by $g$.
 \end{lem}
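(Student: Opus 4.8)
The statement is local at $g$, so the plan is to translate it into the geometry of the variety of minimal rational tangents $\cC_g$, which by the observation recalled just above is the Segre embedding of $\P^k\times\P^{n-k-1}$ in $\P(\Omega^1_{G,g})$. Write $g=[W]$ with $W\subset\C^{n+1}$ the corresponding $(k+1)$-dimensional linear subspace, so that $T_{G,g}\cong\Hom(W,\C^{n+1}/W)$ and $\cC_g$ is the rank-one locus, i.e. the Segre image of $\P(W^\vee)\times\P(\C^{n+1}/W)=\P^k\times\P^{n-k-1}$ under $([\xi],[\bar v])\mapsto[\xi\otimes\bar v]$. A line of $G$ through $g$ corresponds to a flag $T\subset W\subset U$ with $\dim T=k$ and $\dim U=k+2$, and its tangent direction at $g$ is the Segre point $([\xi],[\bar v])$ with $\ker\xi=T$ and $\C\bar v=U/W$. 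The dictionary I would record is: fixing $U$ (equivalently, the point of $\P(\C^{n+1}/W)$) and varying $T$ describes exactly the lines through $g$ lying on one $\alpha$-space, so the members $\P^k\times\{\mathrm{pt}\}$ of the first ruling of $\cC_g$ are the families of lines through $g$ on the $\alpha$-spaces through $g$; dually, the members $\{\mathrm{pt}\}\times\P^{n-k-1}$ of the second ruling are the families of lines through $g$ on the $\beta$-spaces through $g$, and each such space is recovered from the corresponding maximal linear subspace of $\cC_g$.

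I would then run the construction directly. Let $L_1,L_2$ be $\alpha$-spaces meeting only at $g$; they correspond to $(k+2)$-dimensional subspaces $U_1,U_2\supseteq W$, and the hypothesis is equivalent to $U_1\ne U_2$ (since $W\subseteq U_1\cap U_2$ forces $U_1\cap U_2=W$ as soon as $U_1\ne U_2$), hence to $[U_1/W]\ne[U_2/W]$ in $\P(\C^{n+1}/W)$. Identify $P_i$ with the ruling member $\P(W^\vee)\times\{[U_i/W]\}$. Given $\ell_1\in P_1$ with tangent direction $([\xi],[U_1/W])$, put $T=\ker\xi$ and let $M$ be the $\beta$-space with center $\P(T)$; equivalently $M$ is the unique member of the second ruling of $\cC_g$ through the point $([\xi],[U_1/W])$. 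Since $T\subset W\subset U_i$, the intersections $M\cap L_i=\{[W']:T\subset W'\subset U_i\}$ are lines of $G$ through $g$: the first is $\ell_1$ itself, the second a line $\ell_2\in P_2$ with tangent direction $([\xi],[U_2/W])$. Thus, under the identifications $P_1\cong\P(W^\vee)\cong P_2$ given by projection onto the first Segre factor, the map $\phi\colon\ell_1\mapsto\ell_2$ is the identity, in particular a projective linear isomorphism; the case of two $\beta$-spaces is obtained verbatim after exchanging the two factors $\P(W^\vee)$ and $\P(\C^{n+1}/W)$, i.e. the roles of $T$ and $U$.

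The argument is essentially formal, so I do not expect a serious obstacle; the only points requiring attention are bookkeeping ones. One must check that the geometric hypothesis ``$L_1$ and $L_2$ meet at exactly one point'' is the same as $U_1\ne U_2$, so that $P_1$ and $P_2$ are genuinely distinct ruling members and the construction is non-degenerate; and that each $M\cap L_i$ is a line and not a larger linear subspace of $G$, which holds because $T$ has codimension two in $U_i$. Everything else reduces to the elementary fact that in a Segre variety $\P^a\times\P^b$ with $a,b\ge 1$ the maximal linear subspaces are precisely the two rulings, and that a member of one ruling and a member of the other always meet in a single point — which is exactly what makes $M$, and hence $\phi(\ell_1)=M\cap L_2$, well defined.
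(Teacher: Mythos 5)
Your proposal is correct and follows exactly the route the paper indicates: the paper gives no separate proof, stating the lemma as a straightforward consequence of the VMRT at $g$ being the Segre $\P^k\times\P^{n-k-1}$ with its two rulings corresponding to the $\alpha$- and $\beta$-spaces through $g$, and your flag computation $T\subset W\subset U$ is precisely that argument spelled out. The identifications you make (lines through $g$ in an $\alpha$-space $\leftrightarrow$ a ruling member $\P(W^\vee)\times\{[U/W]\}$, the $\beta$-space $M$ determined by $T=\ker\xi$, and $\phi$ becoming the identity on $\P(W^\vee)$) are all accurate, so the proof is complete.
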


Finally we will show that the only vector bundles in case (2) are twists
of
the universal quotient bundle or its dual.

\begin{prop}\label{prop:decomp}
 With the same notation as above, assume that the restriction $E|_L$ is a twist
of $T_L$ or $\Omega_L$ for every
$\alpha$-space $L$ and $E|_M$ is a direct sum of line bundles for every
$\beta$-space $M$. Then $E$ is a twist of either the universal
quotient bundle $\cQ$ or its dual $\cQ^\vee$.
\end{prop}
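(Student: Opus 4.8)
The plan is to reconstruct $E$ from its restrictions to $\alpha$-spaces, using that on each $\alpha$-space $L\cong\P^k$ the bundle $E|_L$ is a twist of $T_L$ or $\Omega_L$. After twisting by a suitable $\cO_G(a)$ and possibly dualizing, I may assume $E|_L\cong T_L(-1)$ for every $\alpha$-space $L$; note $T_{\P^k}(-1)$ sits in the Euler-type sequence $0\to\cO\to\cO(1)^{\oplus(k+1)}\to T_{\P^k}(-1)\cdot(\text{twist})\to 0$, or more usefully that $T_{\P^k}(-1)$ is a quotient of $\cO^{\oplus(k+1)}$ with kernel $\cO(-1)$. The idea is that this local structure should globalize: the universal quotient bundle $\cQ$ on $G$ restricts to each $\alpha$-space $\P^{k+1}=\G(k+1,n)$-member exactly as such a twisted tangent (or cotangent) bundle, and I want to show $E$ and $\cQ$ (up to twist) agree because they agree on a covering family of subvarieties that is "large enough".

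First I would pin down the numerology: compare Chern classes of $E$ with those of $\cQ(t)$ for the appropriate $t$, using that $H^*(G,\C)$ is generated by Schubert classes and that restriction to $\alpha$- and $\beta$-spaces is injective enough on low-degree cohomology to force $c_1(E)$ and $c_2(E)$ to match a unique twist of $\cQ$ or $\cQ^\vee$. Once the candidate twist is fixed, replace $E$ by that twist so that $E|_L\cong T_L(-1)$ on $\alpha$-spaces and $E|_M$ is \emph{trivial-type} on $\beta$-spaces — in fact, since $E|_M$ is a direct sum of line bundles of the same total degree as $\cQ|_M\cong\cO_M\oplus\cO_M(1)^{\oplus?}$, I would argue the splitting type on $\beta$-spaces is forced, say $\cO^{\oplus a}\oplus\cO(1)^{\oplus b}$, by matching Chern data. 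The key construction, exactly as in the proof of Theorem~\ref{thm:criterion}: for each point $g\in G$ and each $\alpha$-space $L\ni g$, the minimal sections of $E|_L$ over lines through $g$ (the surjections $E|_\ell\to\cO_\ell$) glue to an isomorphism between the $\P^{k-1}$ of lines in $L$ through $g$ and a hyperplane $\P(W_g)\subset\P(E_g)$; equivalently $E|_L\cong\Omega_L(2)$ yields a canonical rank-$k$ quotient $E_g\to\Omega_{L,g}$, i.e. a rank-one subspace of $E_g^\vee$ (up to scalar). Letting $L$ vary over all $\alpha$-spaces through $g$ — which are parametrized by a $\beta$-space of the VMRT, hence by a $\P^{n-k-1}$ — I get a morphism $\P^{n-k-1}\to\P(E_g^\vee)$, and I must show it is linear and its image spans, which together with the dimension count ($n-k-1 = \rk\cQ^\vee - 1$ when $r=k+1$... careful, need $r=k+1$, which is exactly the extremal case where $\cQ$ has rank $k+1$) identifies $E^\vee$ fibrewise with a bundle whose projectivization carries the universal family of these $\alpha$-directions.

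The cleanest way to finish, I expect, is to produce a surjection or injection of sheaves directly. Concretely: the above construction assigns to each incidence $(g\in L)$ a line in $E_g^\vee$; this is the same data as a section of a projective bundle over the flag variety $\{(g,L)\}$, i.e. a rational map, and by properness (the family of $\alpha$-spaces is unsplit, being a homogeneous Grassmannian $\G(k+1,n)$) it extends to a morphism. Pulling back the tautological quotient should give a nowhere-zero map from the pullback of $\cO^{\oplus(n+1)}$ (the defining ambient space of $G$, restricted) into $E^\vee$, or dually realize $E$ as the quotient bundle pulled back from the tautological sequence — which is precisely the definition of $\cQ$ up to the previously fixed twist. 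Then $E\cong\cQ(t)$ (or $\cQ^\vee(t)$ if we were in the dual case), and by the Andreatta–Wi\'sniewski rigidity \cite[Prop.~1.2]{AW} no other possibility survives. I would also need Lemma~\ref{lem:familiesgrass} (the linear identification of lines-through-$g$ in two $\alpha$-spaces via $\beta$-spaces) to check that the fibrewise isomorphisms over different points are mutually compatible, exactly as it was used to derive the contradiction in Claim~\ref{claim:TT}; here, instead of a contradiction, it gives the required gluing.

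\medskip
\noindent\textbf{Main obstacle.} The hard part will be the globalization step: showing that the pointwise-defined rank-one subspaces of $E_g^\vee$ (or the maps $\P^{n-k-1}\to\P(E_g^\vee)$) assemble into an honest morphism of varieties and that the resulting subsheaf of $E^\vee$ is a \emph{subbundle} whose quotient is a line bundle — i.e., ruling out jumping of the fibrewise construction over special loci. This is where one must use the homogeneity of $G$ and the unsplitness of the family of $\alpha$-spaces in an essential way (so that the morphism extends and the rank is constant), together with the fact that $E$ has constant splitting type on lines (uniformity) to control the construction along every line, not just a general one. Once that subbundle structure is in hand, matching it with the universal sub/quotient sequence of $G$ and invoking \cite[Prop.~1.2]{AW} is routine.
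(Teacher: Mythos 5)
There is a genuine gap, and it sits exactly where your construction starts. In case (2) the rank is $r=k+1$ while the $\alpha$-spaces are $\P^{k+1}$'s, so after normalizing so that $E|_L\cong\Omega_L(2)$, what an $\alpha$-space $L\ni g$ gives you at the fibre is an \emph{isomorphism} $E_g\cong\Omega_{L,g}$ between spaces of dimension $k+1$ --- not a ``canonical rank-$k$ quotient $E_g\to\Omega_{L,g}$'' with a rank-one kernel. Equivalently, $H^0(L,(E|_L)^\vee)=H^0(L,T_L(-2))=0$, so an $\alpha$-space carries no distinguished trivial quotient or sub-line of $E$, and hence attaches no point of $\P(E_g^\vee)$ to $g$; the map $\P^{n-k-1}\to\P(E_g^\vee)$ you propose to build is simply not defined (note also that source and target have different dimensions unless $n=2k+1$, as your own parenthetical dimension count betrays). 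The objects that do single out points of $\P(E_g)$ are the $\beta$-spaces: $E|_M\cong\cO_M\oplus\cO_M(1)^{\oplus k}$ has a unique trivial quotient, and the $\beta$-spaces through $g$ form a $\P^k$, of the same dimension as $\P(E_g)$. So the roles of the two families in your plan are interchanged, and with $\alpha$-spaces the relative pushforward you would need vanishes identically.

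This also means the step you flag as the ``main obstacle'' (globalization) is not addressed: properness/unsplitness of the family of $\alpha$-spaces does not extend a pointwise assignment to a morphism, nor does matching Chern classes identify the bundle. The paper's proof removes the obstacle by running the standard construction relatively over the universal family of $\beta$-spaces in diagram (\ref{eq:beta}): since $h^0(M,E^\vee|_M)=1$ for every $\beta$-space $M$, the sheaf $F^\vee:=p_2^*{p_2}_*p_1^*E^\vee\subset p_1^*E^\vee$ is a line subbundle, trivial on the fibres of $p_2$, and the resulting surjection $p_1^*E\to F$ defines a morphism $\psi:\F(k-1,k,n)\cong\P(\cQ^\vee(1))\to\P(E)$ of $G$-schemes. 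Since the dimensions agree, it suffices to prove $\psi$ injective, and only here do the $\alpha$-spaces and Lemma \ref{lem:familiesgrass} enter (much as you intended to use them): two $\beta$-spaces $M_1,M_2$ through $g$ meet a common $\alpha$-space $L$ in distinct lines $\ell_1,\ell_2$ through $g$, and because $E|_L\cong\Omega_L(2)$ the minimal sections over $\ell_1,\ell_2$ meet $\P(E_g)$ in the distinct points given by the tangent directions of $\ell_1,\ell_2$ at $g$. The conclusion $\P(E)\cong\P(\cQ^\vee(1))$ as $G$-schemes then yields directly that $E$ is a twist of $\cQ^\vee$ (or of $\cQ$ before dualizing); neither a preliminary Chern-class computation nor a final appeal to \cite[Prop.~1.2]{AW} is what closes the argument.
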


\begin{proof}

Twisting with an appropriate power of $\cO_G(1)$ and dualizing if necessary we
may assume that the restriction of $E$ to an $\alpha$-space $L$ is isomorphic to
$\Omega_L(2)$ and to a $\beta$-space $M$ isomorphic to
$\cO_M\oplus\cO_M(1)^{\oplus k}$.

We will consider the family of $\beta$-spaces:
\begin{equation}
 \xymatrix{&\F(k-1,k,n)\ar[ld]_{p_2}\ar[rd]^{p_1}&\\
 \G(k-1,n)&&\G(k,n)}\label{eq:beta}
\end{equation}

Arguing as in
Theorem \ref{thm:criterion}, the subsheaf
$F^\vee:=p_2^*{p_2}_*p_1^*E^\vee\subset p_1^*E^\vee$ is a line bundle, whose
restriction to any fiber of $p_2$ is trivial. It provides a  surjective
morphism $p_1^*E\to F$, and so a morphism $\psi$ fitting in the
following commutative diagram:

$$\xymatrix{&\F(k-1,k,n)\ar[ld]_{p_2}\ar[rd]^{p_1}\ar[r]^(.60)\psi&\P(E)\ar[d]^{\pi}
\\
    \G(k-1,n)& &\G(k,n)}$$
The morphism $\psi$ may be also described in the following way: its restriction
$\psi|_{p_2^{-1}(x)}$ is determined by the
only surjective morphism $E|_{p_1(p_2^{-1}(x))}\to\cO$, that provides a section
$\varphi$ of $\pi$ over $p_1(p_2^{-1}(x))$, and
$$
\psi|_{p_2^{-1}(x)}=\varphi\circ p_1.
$$
Note that $\F(k-1,k,n)$ is isomorphic to $\P(\cQ^\vee(1))$ and its dimension
equals $\dim\P(E)$, hence it suffices to show that $\psi$ is injective.

Given two $\beta$-spaces
$M_1$ and $M_2$ meeting at a point $g$, the images of the corresponding liftings
$\varphi_1$ and $\varphi_2$
meet $\P(E_g)$ in two points, $y_1$ and $y_2$, respectively. We may take an
$\alpha$-space $L$ meeting them in two lines $\ell_1=L\cap M_1$ and
$\ell_2=L\cap
M_2$, as described in Lemma
\ref{lem:familiesgrass}. But $E|_L\cong\Omega_L(2)$, then
$$
y_1=\varphi_1(\ell_1)\cap\P(E_g)\neq\varphi_2(\ell_2)\cap\P(E_g)=y_2,
$$
because $\varphi_i(\ell_i)\cap\P(E_g)$ is the point in $\P(E_g)$ corresponding
to the tangent direction to $\ell_i$ at $g$ in $L$. This concludes the proof.
\end{proof}

A similar argument, together with the classification of uniform vector bundles
of rank $n+1$ on $\P^n$
(cf. \cite{E} and \cite{B2}), can be applied to isotropic Grassmannians:
\begin{prop}\label{prop:quadricgrass2}
Every uniform vector bundle on the isotropic Grassmannian $\G_Q(m-1,2m-1)$ of rank
$m$ is decomposable unless it is a twist of the universal quotient bundle $\cQ$
or its dual.
\end{prop}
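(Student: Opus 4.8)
The plan is to mimic the structure of the proof of Proposition~\ref{prop:decomp}, replacing the universal family of $\beta$-spaces on $\G(k,n)$ by an appropriate family of maximal linear subspaces on the isotropic Grassmannian $\G_Q(m-1,2m-1)$. Recall that this variety, being one of the two components parametrizing $\P^{m-1}$'s in a smooth quadric $\Q^{2m-2}$, has VMRT at every point isomorphic to $\G(1,m-1)$, whose maximal linear subspaces come in a single family (corresponding to one family of $\P^{m-1}$'s inside the quadric, which we call $\alpha$-spaces, each of dimension $m-1$). First I would take a uniform vector bundle $E$ of rank $m$ on $X:=\G_Q(m-1,2m-1)$. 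Restricting $E$ to these $\alpha$-spaces $L\cong\P^{m-1}$, we get a uniform vector bundle of rank $m=\dim L+1$ on $\P^{m-1}$; by the classification of uniform bundles of rank $n+1$ on $\P^n$ (cf.~\cite{E}, \cite{B2}), $E|_L$ is, up to twist and dualization, a direct sum of line bundles, or a twist of $T_L$, $\Omega_L$, or $T_L\oplus\cO_L$-type bundles built from $T_{\P^{m-1}}$. Using the fact that uniform bundles on $\P^n$ of rank $\leq n+1$ are determined by their Chern classes, the restriction is independent of the chosen $\alpha$-space.

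The next step is a case analysis, exactly parallel to the one preceding Claim~\ref{claim:TT}. If $E|_L$ is a direct sum of line bundles for every $\alpha$-space $L$, then since the VMRT $\G(1,m-1)$ is chain-connected by its maximal linear subspaces, Corollary~\ref{cor:uniformlinear} applies and $E$ splits. (Here one must check the hypotheses of that corollary: the $\alpha$-spaces through a point sweep out, via tangent directions, the maximal linear spaces of the VMRT, and these chain-connect it.) So we may assume $E|_L$ is, after twist and dualization, $\Omega_L(2)=\Omega_{\P^{m-1}}(2)$ for every $\alpha$-space $L$. Then I would argue that $E$ is a twist of $\cQ$ or $\cQ^\vee$ by constructing a morphism $\psi$ from the universal family of $\alpha$-spaces to $\P(E)$, using that on each $\alpha$-space the bundle $\Omega_L(2)\cong\cO_L\oplus\cO_L(1)^{\oplus(m-1)}$ has a unique surjection onto $\cO_L$; this gives, as in the proof of Theorem~\ref{thm:criterion} and Proposition~\ref{prop:decomp}, a line subbundle $F^\vee\subset p_1^*E^\vee$ trivial on the fibers of $p_2$, hence a section over each $\alpha$-space and a morphism $\psi$ fitting in the analogous commutative diagram. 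One checks dimensions match: the universal family of $\alpha$-spaces has dimension $\dim X + (m-1) = \dim\P(E)$, so it suffices to prove $\psi$ is injective.

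For injectivity, I would take two $\alpha$-spaces $L_1,L_2$ meeting at a point $g$; their liftings meet $\P(E_g)$ in points $y_1,y_2$. The hypothesis $E|_{L_i}\cong\Omega_{L_i}(2)$ means $y_i$ is identified with the tangent direction at $g$, inside $\P(E_g)$, of a line in $L_i$ through $g$. If $y_1=y_2$, the corresponding tangent directions in the VMRT $\G(1,m-1)_g$ coincide. But in $\G(1,m-1)$ two distinct maximal linear subspaces meet in at most a point, so two distinct $\alpha$-spaces through $g$ determine linearly disjoint linear subspaces of the VMRT meeting only at a point; tracking which point of each such linear subspace the lifting selects, one sees they cannot agree unless $L_1=L_2$. (Alternatively, as in Claim~\ref{claim:TT}, one can bring in the lines of the VMRT joining the two selected directions and derive a contradiction from the $\Omega(2)$-structure along a suitable curve.) Thus $\psi$ is injective, hence an isomorphism onto $\P(E)$, identifying $\P(E)$ with the universal family of $\alpha$-spaces, which is a projectivization of a twist of $\cQ^\vee$; chasing this identification back gives $E\cong\cQ\otimes\cO_X(a)$ or $E\cong\cQ^\vee\otimes\cO_X(a)$. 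The main obstacle I anticipate is the last step: verifying carefully, using the precise geometry of $\G(1,m-1)$ as the VMRT and how the family of $\alpha$-spaces sits inside $X$, that the two selected tangent directions genuinely differ, i.e. that the combinatorial disjointness of maximal linear subspaces of $\G(1,m-1)$ translates into the needed inequality $y_1\neq y_2$ in $\P(E_g)$; this is where one has to be most attentive to the analogues of Lemma~\ref{lem:familiesgrass} in the isotropic setting.
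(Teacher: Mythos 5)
Your overall strategy (lift the family of $\P^{m-1}$'s to $\P(E)$ via the distinguished trivial quotient and show the resulting morphism $\psi:\P(\cQ^\vee)\to\P(E)$ is an isomorphism of $QG$-schemes) is the same as the paper's, but two essential steps are missing or wrong. First, the rank bookkeeping: $E$ has rank $m=\dim L+1$ on each maximal linear space $L\cong\P^{m-1}$, so by \cite{E},\cite{B2} the unsplit possibility is, up to twist and dual, $E|_L\cong\Omega_L(2)\oplus\cO_L(a)$ — never $\Omega_L(2)$, which has rank $m-1$; likewise your identity $\Omega_L(2)\cong\cO_L\oplus\cO_L(1)^{\oplus(m-1)}$ is false (that is only the splitting type on a line, and the ranks do not even match). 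You must then prove $a=0$, and this is a genuine step: the paper does it by noting that for $a\neq 0$ the splitting type has a unique minimal entry, so the construction of Theorem \ref{thm:criterion} yields morphisms $\G(1,m-1)\to\P^{m-1}$ at every point, which are constant (Lemma \ref{lem:quadricgrass1}); this produces a line-bundle quotient of $E$ whose kernel splits by Proposition \ref{prop:quadgrass}, contradicting indecomposability. Your proposal never confronts the extra summand.

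Second, and more seriously, the injectivity of $\psi$ is exactly the point where your argument does not go through. In Proposition \ref{prop:decomp} the separation $y_1\neq y_2$ works because there are two transversal families: the lifted points come from $\beta$-spaces, while on the $\alpha$-spaces $E$ restricts to $\Omega_L(2)$, for which the minimal section over a line $\ell\ni g$ meets $\P(E_g)$ precisely at the tangent direction of $\ell$. On $\G_Q(m-1,2m-1)$ there is only the one family of $\P^{m-1}$'s, and on them the restriction is $\Omega(2)\oplus\cO$: the distinguished point $y_M\in\P(E_g)$ is the trivial quotient, which carries no tangent-direction interpretation, and the minimal sections over a line through $g$ sweep out a whole line $r(\ell,g)\subset\P(E_g)$ rather than a single point. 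Hence the combinatorial fact that two maximal linear subspaces of the VMRT $\G(1,m-1)$ meet in at most a point tells you nothing about $y_1$ versus $y_2$ for an abstract uniform $E$ (it is true in the model $E=\cQ$, but that is what you are trying to prove). The paper replaces this by a different mechanism: it restricts $\psi$ over a single $\P^{m-1}$, obtaining a self-map of $\P(\Omega_{\P^{m-1}}(2)\oplus\cO_{\P^{m-1}})$ over $\P^{m-1}$, and invokes the rigidity Lemma \ref{lem:quadricgrass} (proved via the extremal contraction onto a cone over $\G(1,n)$ and adjunction, or alternatively \cite{PS}): such a self-map is either a fiberwise linear isomorphism or factors through the base, and the second alternative is excluded because it would again give a line-bundle quotient of $E$ with split kernel (Proposition \ref{prop:quadgrass}), hence a splitting of $E$. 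Without this rigidity statement, or some substitute for it, your injectivity claim remains unproven; you should also dispose of the small cases $m\leq 3$ separately, as the paper does.
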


\begin{proof}
We may assume $m\geq 4$, since on $\G_Q(1,3)\cong\P^1$ and on $\G_Q(2,5)\cong\P^3$
the result is well-known. Recall that $QG:=\G_Q(m-1,2m-1)$ is covered by
$\P^{m-1}$'s, and the corresponding family is parametrized by $QG$:
$$\xymatrix{&\G_Q(m-2,2m-1)\ar[ld]_{p_2}\ar[rd]^{p_1}&
\\QG& &QG}
$$
Moreover $\G_Q(m-2,2m-1)$ equals the projectivization of the dual of the universal
quotient bundle $\cQ$. Hence, it suffices to show that $\P(E)$ or $\P(E^{\vee})$
are isomorphic to $\G_Q(m-2,2m-1)$ as $QG$-schemes.

Let $E$ be a uniform vector bundle on $QG:=\G_Q(m-1,2m-1)$ and assume that $E$ is
not a direct sum of line bundles.

Given any $\P^{m-1}\subset QG$ the restriction $E|_{\P^{m-1}}$ cannot be
decomposable. Otherwise, since rank $m$ uniform vector bundles on $\P^{m-1}$ are
determined by their Chern classes (see \cite{E} and \cite{B2}), it would be
decomposable for every $\P^{m-1}$, and $E$ would be decomposable by Corollary
\ref{cor:uniformlinear}. It follows that, up to dualizing and twisting, $E$ is
isomorphic to $\Omega_{\P^{m-1}}(2)\oplus\cO_{\P^{m-1}}(a)$, cf. \cite{E}
and \cite{B2}. We claim that $a=0$. In fact if it is not one easily sees that,
after dualizing and/or twisting conveniently, the splitting type of $E$ would
take the form $(0=a_0<a_1\leq...)$. Arguing as in Theorem \ref{thm:criterion} we
would get a morphism $\G(1,m-1)\to\P^{m-1}$ for every point of $QG$. Since these
morphisms are necessarily constant we would get a surjective map from $E$ onto a
line bundle, whose kernel, by Proposition \ref{prop:quadgrass} is a direct
sum of line bundles. It would follow that $E$ is decomposable, a contradiction.

As a consequence, given any $\P^{m-1}\subset QG$, there is a unique surjective
map $E|_{\P^{m-1}}\to\cO_{\P^{m-1}}$, providing a unique minimal section
$\P^{m-1}\hookrightarrow\P(E|_{\P^{m-1}})$. Arguing as in Proposition
\ref{prop:decomp} we may lift the family of $\P^{m-1}$'s in $QG$ to a family in
$\P(E)$:
$$\xymatrix{&\P(\cQ^\vee)\ar[ld]_{p_2}\ar[rd]^{p_1}\ar[r]^{\psi}&\P(E)\ar[d]^{
\pi}
\\QG& &QG}
$$

The proof is finished by showing that $\psi$ is an isomorphism. Since $\psi$ is
a morphism of $QG$-schemes, it suffices to show that the restriction
$\psi|_{p_1^{-1}(x)}:p_1^{-1}(x)\to\pi^{-1}(x)$ is an isomorphism for every $x$.
Fix a point $x\in QG$ and let $M$ be a $\P^{m-1}$ containing it. Denote by
$\varphi$ the restriction:
$$
\varphi:=\psi|_{p_1^{-1}(M)}:p_1^{-1}(M)\cong\P\left(\Omega_{M}(2)\oplus\cO_{M}
\right)\to \pi^{-1}(M)\cong\P\left(\Omega_{M}(2)\oplus\cO_{M}\right),
$$
which is a morphism of $M$-schemes. By Lemma \ref{lem:quadricgrass} below
$\varphi$ is either a linear isomorphism at every fiber or it factors via
$p_1|_{p_1^{-1}(M)}$. But if the former happens for a $\P^{m-1}$, then $\psi$
contracts vertical lines, and consequently it factors through $p_1$, providing a
section of $\pi$. In this case we would get a surjection from $E$ to a line
bundle, whose kernel would be decomposable by Proposition
\ref{prop:quadgrass}, and $E$ would be decomposable, too.
\end{proof}

\begin{lem}\label{lem:quadricgrass}
Let $\varphi:\P\left(\Omega_{\P^n}(2)\oplus\cO_{\P^n}\right)\to
\P\left(\Omega_{\P^n}(2)\oplus\cO_{\P^n}\right)$ be a morphism of
$\P^n$-schemes, $n\geq 3$. Then either $\varphi$ is an isomorphism or it factors
through the natural projection
$\pi:\P\left(\Omega_{\P^n}(2)\oplus\cO_{\P^n}\right)\to\P^n$.
\end{lem}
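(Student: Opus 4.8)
The plan is to reduce the statement to a question about morphisms between projective bundles over $\P^n$ and then analyze the behaviour of $\varphi$ on each fibre. Write $W:=\Omega_{\P^n}(2)\oplus\cO_{\P^n}$, a rank $n+1$ bundle, and $\mathbb{P}:=\P(W)$ with projection $\pi:\mathbb{P}\to\P^n$. Since $\varphi$ is a morphism of $\P^n$-schemes, it is completely determined by its restrictions $\varphi_x:=\varphi|_{\pi^{-1}(x)}:\P(W_x)\to\P(W_x)$ to the fibres, together with the way these vary. The first step is to understand $\varphi^*\cO_{\mathbb{P}}(1)$: because $\Pic(\mathbb{P})=\Z\,\pi^*\cO_{\P^n}(1)\oplus\Z\,\cO_{\mathbb{P}}(1)$ and $\varphi$ commutes with $\pi$, we get $\varphi^*\cO_{\mathbb{P}}(1)=\cO_{\mathbb{P}}(\lambda)\otimes\pi^*\cO_{\P^n}(\mu)$ for integers $\lambda,\mu$. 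Pushing forward by $\pi$, the sheaf map $W^\vee\to\pi_*(\varphi^*\cO_{\mathbb{P}}(1))=S^\lambda W^\vee(\mu)$ (for $\lambda\ge 0$; if $\lambda<0$ this pushforward vanishes, forcing a contradiction unless $\varphi$ is not defined fibrewise as a morphism) forces $\lambda=1$ by comparing ranks and the fact that $\varphi_x$ is a morphism $\P^n\to\P^n$, hence either constant or an isomorphism. So $\lambda\in\{0,1\}$: if $\lambda=0$ then $\varphi^*\cO_{\mathbb{P}}(1)$ is a pullback from $\P^n$, which means $\varphi$ is constant on fibres, i.e. factors through $\pi$; if $\lambda=1$ then each $\varphi_x$ is a linear automorphism of $\P(W_x)$, so $\varphi$ is a fibrewise isomorphism, hence an isomorphism of $\P^n$-schemes. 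This is the skeleton of the dichotomy.

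The content of the lemma is therefore to rule out the "mixed" possibility: that $\varphi_x$ is an isomorphism for $x$ in some nonempty open set and constant (or degenerate) on the complement. Equivalently, once we know $\varphi^*\cO_{\mathbb{P}}(1)=\cO_{\mathbb{P}}(1)\otimes\pi^*\cO_{\P^n}(\mu)$ globally — which holds as soon as $\varphi_x$ is an isomorphism for one $x$, since $\lambda$ is locally constant and $\P^n$ is connected — we must show that $\varphi$ is then an isomorphism everywhere. With $\lambda=1$, $\pi_*\varphi^*\cO_{\mathbb{P}}(1)=W^\vee(\mu)$, and $\varphi$ corresponds to a morphism of vector bundles $W^\vee\to W^\vee(\mu)$, i.e. a section $\sigma$ of $\cEnd(W^\vee)(\mu)=\cEnd(W)(\mu)$, whose value at each point $x$ is the linear map inducing $\varphi_x$. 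The locus where $\varphi_x$ fails to be an isomorphism is exactly the degeneracy locus $\{\det\sigma=0\}$, a divisor in the class $(n+1)\mu\cdot\cO_{\P^n}(1)$ (if nonzero). So everything hinges on showing $\mu=0$, equivalently $\det\sigma$ is a nonvanishing constant. For this I would compute $\mu$ by restricting to a line $\ell\cong\P^1\subset\P^n$: on such a line $W|_\ell\cong\cO\oplus\cO(1)^{\oplus(n-1)}\oplus\cO$ (from $\Omega_{\P^n}(2)|_\ell\cong\cO\oplus\cO(1)^{\oplus(n-1)}$... wait, $\Omega_{\P^n}|_\ell\cong\cO(-2)\oplus\cO(-1)^{\oplus(n-1)}$ so $\Omega_{\P^n}(2)|_\ell\cong\cO\oplus\cO(1)^{\oplus(n-1)}$), and $\varphi$ restricts to an endomorphism of the $\P^n$-bundle $\P(W|_\ell)$ over $\P^1$, which by the explicit structure of $\Aut$ of such bundles, together with the hypothesis $n\ge 3$ (so that the rank is at least $4$ and no small-dimensional coincidences interfere), pins down $\mu=0$; indeed $\Hom(\cO\oplus\cO(1)^{\oplus(n-1)}\oplus\cO,\ (\cO\oplus\cO(1)^{\oplus(n-1)}\oplus\cO)(\mu))$ contains an everywhere-invertible element only for $\mu=0$ when $n\ge 2$, since a positive twist makes the two $\cO$-summands unable to map isomorphically onto anything, forcing a drop of rank.

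The main obstacle I anticipate is the bookkeeping in the case $\lambda=1,\ \mu>0$: one must argue that the section $\sigma\in H^0(\P^n,\cEnd(W)(\mu))$ cannot be everywhere nondegenerate when $\mu>0$, and the cleanest route is the restriction-to-a-line argument above combined with the observation that $W$ has $\cO_{\P^n}$ as a direct summand (the $\cO_{\P^n}$ in $\Omega_{\P^n}(2)\oplus\cO_{\P^n}$), so $\cEnd(W)(\mu)$ has $\cO_{\P^n}(\mu)$ as a summand and a nondegenerate $\sigma$ would in particular need its "diagonal" $\cO_{\P^n}\to\cO_{\P^n}(\mu)$ block to be nonzero, while the $\cO_{\P^n}(1)$-summands of $\Omega_{\P^n}(2)$ (morally; $\Omega_{\P^n}(2)$ is not itself split) cannot map isomorphically onto twisted copies — made precise via the line restriction where the splitting is genuine. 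Once $\mu=0$ is established the determinant of $\sigma$ is a global function on $\P^n$, hence constant; it is nonzero because $\varphi_x$ is an isomorphism for general $x$; so $\varphi$ is a fibrewise linear isomorphism, hence an isomorphism of $\P^n$-schemes, completing the dichotomy. The case $\lambda=0$ is immediate: $\varphi^*\cO_{\mathbb{P}}(1)$ being a pullback from $\P^n$ means $\varphi$ contracts every fibre to a point, i.e. factors through $\pi$. I'd organize the write-up as: (i) $\Pic(\mathbb{P})$ and the form of $\varphi^*\cO_{\mathbb{P}}(1)$; (ii) $\lambda\in\{0,1\}$ and the $\lambda=0$ case; (iii) for $\lambda=1$, reduce to nondegeneracy of $\sigma\in H^0(\cEnd(W)(\mu))$; (iv) restrict to a line to force $\mu=0$; (v) conclude.
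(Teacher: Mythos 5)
Your proposal breaks down at its very first reduction: you assert that each fibre map $\varphi_x\colon\P(W_x)\to\P(W_x)$, being a morphism $\P^n\to\P^n$, is ``either constant or an isomorphism,'' and from this you deduce $\lambda\in\{0,1\}$ in $\varphi^*\cO_{\P(W)}(1)\cong\cO_{\P(W)}(\lambda)\otimes\pi^*\cO_{\P^n}(\mu)$. This is false: $\P^n$ admits finite self-maps of every degree $d\geq 2$ (e.g.\ $[x_0:\dots:x_n]\mapsto[x_0^d:\dots:x_n^d]$), and a $\P^n$-morphism $\varphi$ with $\lambda=d$ corresponds fibrewise to an $(n+1)$-tuple of degree-$d$ forms with no common zero. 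So the case $\lambda\geq 2$ is not touched by anything you say, and excluding it is exactly the content of the lemma; your later steps (the section of $\cHom(W,W(\mu))$, the restriction to a line forcing $\mu=0$) live entirely inside the easy case $\lambda\leq 1$. Note moreover that no purely bundle-theoretic bookkeeping can work: for the trivial bundle $\cO_{\P^n}^{\oplus(n+1)}$ one has $\P(W)=\P^n\times\P^n$, and $(x,y)\mapsto(x,g(y))$ with $g$ a degree-two self-map of $\P^n$ is a $\P^n$-morphism that is neither an isomorphism nor factors through $\pi$. Any correct proof must therefore use the specific geometry of $W=\Omega_{\P^n}(2)\oplus\cO_{\P^n}$, which your argument never does.

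That geometry is what the paper exploits. Besides $\pi$, the variety $X=\P\left(\Omega_{\P^n}(2)\oplus\cO_{\P^n}\right)$ carries a second extremal contraction $\tau\colon X\to Y$, where $Y$ is a cone over $\G(1,n)$ whose fibres are the minimal sections of $\pi$ over lines of $\P^n$ (the fibre over the vertex being the distinguished section $S$). If no $\varphi_x$ is constant, then $\tau\circ\varphi$ factors through $\tau$ (Stein factorization), so $\varphi$ maps minimal sections over a line $\ell$ to minimal sections over the same $\ell$; consequently every fibre $\varphi_x^{-1}(P)$ is contained in a line $r(\ell,x)\subset\pi^{-1}(x)$ passing through $O_x=S\cap\pi^{-1}(x)$. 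Restricting $\varphi_x$ to a general hyperplane of $\pi^{-1}(x)$ then yields an injective morphism, i.e.\ an immersion $\P^{n-1}\hookrightarrow\P^n$, which for $n\geq 3$ must be linear by adjunction; hence $\varphi_x$ is linear and $\varphi$ is an isomorphism (alternatively, as in Remark \ref{rem:otherproof}, one invokes Paranjape--Srinivas on self-maps of $\G(1,n)$). To salvage your approach you would have to add an argument of this kind ruling out fibrewise maps of degree at least two; without it the claimed dichotomy is unproved.
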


\begin{proof}
Let us begin by recalling some features on
$X:=\P\left(\Omega_{\P^n}(2)\oplus\cO_{\P^n}\right)$. The unique surjection
$\Omega_{\P^n}(2)\oplus\cO_{\P^n}\to\cO_{\P^n}$ determines a distinguished
section of $\pi:X\to\P^n$ whose image we denote by $S$.
Given a line $\ell\subset\P^n$, a surjective morphism
$(\Omega_{\P^n}(2)\oplus\cO_{\P^n})_\ell\cong\cO_\ell(1)^{n-1}
\oplus\cO_\ell^2\to\cO$ provides a section of $X:=\P(E)$ over $\ell$, whose
numerical class determines a extremal ray of $\NE(X)$ and a contraction
$\tau:X\to Y$, where $Y$ is a cone with vertex one point and base the
Grassmannian $\G(1,n)$. Every fiber of $\tau$ is a minimal section of $\P(E)$
over a line, except for the fiber over the vertex of $Y$, that is $S$. Given a
point $x\in\P^n$ and a line $\ell\ni x$, the minimal sections of $\pi$ over
$\ell$ cut the fiber $\pi^{-1}(x)$ at the points of a line
$r(\ell,x)\subset\pi^{-1}(x)$. Moreover the line $r(\ell,x)\subset\pi^{-1}(x)$
contains the point $O_x:=\pi^{-1}(x)\cap S$.

Given a morphism $\varphi:X\to X$ of $\P^n$-schemes, its restriction
$\varphi_x:=\varphi|_{\pi^{-1}(x)}$ is either constant or surjective onto
$\pi^{-1}(x)$. If the former happens for a point $x\in\P^n$, then $\varphi$
factors via $\pi$. Therefore we may assume that $\varphi|_{\pi^{-1}(x)}$ is
surjective for every $x$, and it suffices to show that in this case $\varphi_x$
is linear for some $x$.

In this case the composition $\tau\circ\varphi$ factors through $\tau$
necessarily (just take the Stein factorization of the morphism $\tau \circ
\varphi$)
$$
\xymatrix{X\ar[r]^\varphi\ar[d]_\tau&X\ar[d]^{\tau}\\Y\ar[r]^{\overline \varphi}&Y}
$$
and consequently the image of a minimal section of $\P(E)$ over a line $\ell$ is
a minimal section over $\ell$, too. In particular, if the degree of $\overline \varphi$ is
bigger than one, then the preimage of a general minimal section $r$ would
contain at least two minimal sections $r_1$ and $r_2$ verifying
$\pi(r)=\pi(r_1)=\pi(r_2)$. In particular $r_1\cap \pi^{-1}(x)$ and
$r_2\cap\pi^{-1}(x)$ lie in $r(\ell,x)$.

As a consequence, the morphism $\varphi_x$ verifies the following property: for
every $P\in\pi^{-1}(x)$, $\varphi_x^{-1}(P)\subset r(\ell,x)\ni O_x$. But in
this case, the restriction of  $\varphi_x$ to a general hyperplane
$H\subset\pi^{-1}(x)$ provides an immersion
$$H\cong\P^{n-1}\hookrightarrow\pi^{-1}(x)\cong\P^n.$$
Since $n\geq 3$, then adjunction theory tells us that this immersion is
necessarily linear and so $\varphi_x$ is linear, too.
\end{proof}

\begin{rem}\label{rem:otherproof}
{\rm As the referee suggested, the last part of the proof of the  previous lemma follows from a result of \cite{PS}.
Namely, taking a general hyperplane section $Y'$ of $Y$ such that $\overline{\varphi}(Y')$ does not contain the vertex of $Y$, and composing the restriction of $\overline{\varphi} $ to $Y'$ with the projection to $\G(1,n)$ we obtain a finite surjective morphism $Y' \simeq \G(1,n) \to \G(1,n)$, which is an isomorphism by \cite[Proposition 6]{PS}. This implies that $\varphi_x$ is  linear and concludes the proof.}
\end{rem}

%%%%%%%%%%%%%%%%%%%%%%%%%%%%%%%%%%%%%%%%%%%%%%%%%%%%%%%%%%%%%

\section{Fano bundles on Grassmannians}\label{sec:fano}

In this last section we apply our results to the study of Fano bundles (i.e.
bundles such that their projectivization is a Fano manifold) on Grassmannians;
in view of this goal,
we need to prove that, in many cases, these bundles are uniform.\\
The idea is to consider the restriction to a maximal linear subspace of the
Grassmannian;
this restriction is not in general a Fano bundle on the projective space, but
enjoys a weaker property. This motivates the following definition:

\begin{defn}\label{def:rfano} Let $(X,H)$ be a smooth complex polarized variety
and $r$ a nonnegative integer. A vector bundle $E$ on $X$ is called {\rm
$r$-Fano with respect to $H$} if $-K_{\P(E)}+r \pi^* H$ is ample, where
$\pi:\P(E) \to X$ denotes the projective bundle on $X$.
\end{defn}

Let us remark that for $r=0$ the notion of a $0$-Fano bundle is exactly that of
a Fano bundle. Let us moreover notice that $E$ is $r$-Fano if and only if the
$\Q$-vector bundle
$$E\left(\frac{-\det E-K_X+rH}{\rk{E}}\right)$$
 is ample. It is
clear from the definition that $r$-Fano implies $(r+1)$-Fano.
\medskip

Inspired by the classification of Fano bundles on projective spaces, we study
$r$-Fano bundles of rank two over the projective space. We first prove that, if
$r$
is small with respect to some function of the dimension of the base space and
of the first Chern class of the bundle $E$, then $E$ splits; then we focus on
the case
$r=1$ proving that every rank two $1$-Fano bundle on $\P^n$ splits if $n \ge
4$.\\
We follow the ideas in \cite{APW} and \cite{SW}.\par
\medskip
To simplify notations we always consider
$E$ to be normalized, that is, $c_1(E)=e$, with $e=0$ or $-1$. We will 
denote by $c_2$ the second Chern class of $E$.

Denote by $\alpha$ the class modulo $2$ of $n+1+r-e$ and let $m$ be such that
$2m = n+1+r-e +\alpha$.
With this choice of $m$ the bundle $E(m)$ is ample and the vanishing theorems of
Le Potier and Griffiths (cf. \cite[Thms.~7.3.5,~7.3.1]{L}) read as follows:

\begin{lem}\label{lem:vanish}
 With the same notation as above, we have:
\begin{itemize}
 \item (Le Potier) $ H^{i}(\P^n,E(j))=0 \text{ for } i \geq 2,  j\geq -m+r-e +\alpha$, and
 \item (Griffiths) $H^{i}(\P^n,E(j))=0 \text{ for } i \geq 1, j\geq m+r +\alpha$
\end{itemize}
\end{lem}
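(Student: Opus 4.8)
The plan is to deduce both statements by plugging the ample bundle $E(m)$, together with its positive twists, into the vanishing theorems of Le Potier and Griffiths, and reading off the output. The whole argument rests on the numerical identity contained in the definition $2m=n+1+r-e+\alpha$, namely
$$2m+e-n-1=r+\alpha\qquad\text{and}\qquad m-n-1=-m+r-e+\alpha.$$
I would also recall, as stated just before the lemma, that $E(m)$ is ample: since $E$ is $r$-Fano, the $\Q$-vector bundle $E\big(\tfrac{-\det E-K_{\P^n}+r}{\rk E}\big)=E\big(\tfrac{n+1+r-e}{2}\big)$ is ample, and $m-\tfrac{n+1+r-e}{2}=\tfrac{\alpha}{2}\ge 0$, so $E(m)$ is ample; consequently $E(m+s)=E(m)\otimes\cO(s)$ is ample for every $s\ge 0$, being an ample bundle twisted by a nef line bundle.

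For the Le Potier bullet I would apply Le Potier's theorem \cite[Thm.~7.3.5]{L} to the rank-two ample bundle $E(m+s)$, $s\ge 0$, in the top Hodge degree $p=\dim\P^n=n$, where $\Omega^n_{\P^n}=\omega_{\P^n}=\cO(-n-1)$; since $\rk E(m+s)=2$ the Le Potier range $p+q\ge\dim\P^n+\rk E(m+s)$ reads $q\ge 2$, so
$$H^q\big(\P^n,E(m+s-n-1)\big)=0\qquad(q\ge 2,\ s\ge 0),$$
and, writing $j=m+s-n-1$ and using $m-n-1=-m+r-e+\alpha$, this is exactly the first statement. For the Griffiths bullet I would apply Griffiths' theorem \cite[Thm.~7.3.1]{L} to the same ample bundle $E(m)$ with first symmetric power (i.e.\ $S^1E(m)=E(m)$) and an additional nef twist by $\cO(s)$, $s\ge 0$; since $\omega_{\P^n}\otimes\det\big(E(m)\big)=\cO(-n-1+e+2m)=\cO(r+\alpha)$, this gives
$$H^i\big(\P^n,E(m+r+\alpha+s)\big)=0\qquad(i\ge 1,\ s\ge 0),$$
which is the second statement.

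The only genuinely delicate point is to invoke the two theorems in the version that permits the extra nef twist, which is how they are stated in \cite{L}. Without it, Griffiths applied to $E(m+s)$ only produces the vanishing for $j$ lying in an arithmetic progression of step $3$ (twisting a rank-two bundle by $\cO(1)$ shifts its determinant by $\cO(2)$, so the effective shift in the target is by $\cO(3)$); to circumvent this one could instead feed Griffiths the rank-three ample bundle $E(m)\oplus\cO(1)$, whose $k$-th symmetric power has $E(m+k-1)$ as a direct summand, obtaining the vanishing for all $j\ge m+r+\alpha+1$, and then add the single value $j=m+r+\alpha$ obtained by applying Griffiths directly to $E(m)$. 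Beyond keeping this bookkeeping straight there is no real obstacle: once the ampleness of $E(m)$ is in hand, the lemma is a transcription of the two classical vanishing theorems.
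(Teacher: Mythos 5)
Your proposal is correct and follows exactly the route the paper intends: the paper offers no written proof beyond observing that $E(m)$ is ample and that the lemma is the transcription of Le Potier and Griffiths vanishing (as in \cite[Thms.~7.3.5,~7.3.1]{L}) applied to $E(m)$ and its nonnegative twists, which is precisely what you carry out, with the numerics $m-n-1=-m+r-e+\alpha$ and $2m+e-n-1=r+\alpha$ checked correctly. Your extra care about the nef-twist version of Griffiths (and the $E(m)\oplus\cO(1)$ workaround) is sound but not an essential departure from the paper's argument.
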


 Hence $E(m+r+1+\alpha)$  is
$0$-regular in the sense of Castelnuovo-Mumford and in particular:
\begin{lem}\label{lem:gg}
With the same notation as above, the vector bundle $E(m+r+1+\alpha)$ is globally generated.
\end{lem}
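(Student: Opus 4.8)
The plan is to reduce the assertion to Castelnuovo--Mumford regularity. Recall that a coherent sheaf $\cF$ on $\P^n$ is $0$-regular if $H^i(\P^n,\cF(-i))=0$ for all $i\geq 1$, and that Mumford's theorem (cf.\ \cite{L}, or \cite[Ch.~I]{OSS}) ensures that a $0$-regular sheaf is globally generated. So I would simply verify that $\cF:=E(m+r+1+\alpha)$ is $0$-regular, i.e.\ that $H^i\bigl(\P^n,E(m+r+1+\alpha-i)\bigr)=0$ for every $i\geq 1$; this will follow at once from Lemma \ref{lem:vanish}.

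For $i=1$ the relevant twist is $E(m+r+\alpha)$, and Griffiths' vanishing from Lemma \ref{lem:vanish} applies since that twist is exactly $m+r+\alpha$. For $i\geq 2$ the relevant twist is $E(m+r+1+\alpha-i)$, and Le Potier's vanishing from Lemma \ref{lem:vanish} applies provided $m+r+1+\alpha-i\geq -m+r-e+\alpha$, that is, provided $i\leq 2m+1+e$. Using the relation $2m=n+1+r-e+\alpha$ one gets $2m+1+e=n+2+r+\alpha\geq n+2$; since $H^i(\P^n,-)$ vanishes automatically for $i>n$, this bound holds for every $i$ that matters. Hence $\cF$ is $0$-regular, and therefore globally generated.

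I do not expect any genuine difficulty here: once the Le Potier and Griffiths vanishing statements of Lemma \ref{lem:vanish} are available, the result is a formal consequence of the definition of $0$-regularity. The only point requiring a little care is the numerical bookkeeping --- keeping track of the parity correction $\alpha\in\{0,1\}$ and of the specific choice of $m$ --- together with the observation that cohomology on $\P^n$ vanishes above degree $n$, so that no separate treatment of the top-degree cohomology groups is needed.
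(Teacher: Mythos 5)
Your proof is correct and follows exactly the paper's argument: the paper deduces Lemma \ref{lem:gg} by observing that the Le Potier and Griffiths vanishings of Lemma \ref{lem:vanish} make $E(m+r+1+\alpha)$ $0$-regular in the sense of Castelnuovo--Mumford, hence globally generated. Your write-up merely makes explicit the routine verification of $0$-regularity (including the harmless range check $i\leq 2m+1+e=n+2+r+\alpha$), which the paper leaves implicit.
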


It is well known that not every pair $(e,c_2)$ can appear as the 
Chern classes of a vector bundle $F$. First of all, the integrality 
of the Euler characteristic can be rephrased in the following well 
known result (see \cite[pp. 112-113]{OSS}):

\begin{lem}[Schwarzenberger conditions]\label{lem:schwarz}
 Let $F$ be a rank two vector bundle on $\P^n$, and let $c_1$ and 
 $c_2$ denote its Chern classes. Then:
\begin{itemize}
 \item[$S_3$:] $c_1c_2\equiv 0$ (mod $2$)
 \item[$S_4$:] ($=S_5$) $c_2(c_2+1-3c_1-2c_1^2)\equiv 0$ (mod $12$)
 \item[$S_6$:] $-2c_2^3+254c_2^2-1644c_2\equiv 0$ (mod $720$) if 
 $c_1=-1$ and $-2c_2^3+170c_2^2-548c_2\equiv 0$ (mod $720$) if $c_1=0$.
\end{itemize}
\end{lem}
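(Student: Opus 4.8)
The plan is to derive the Schwarzenberger conditions from the requirement that the Euler characteristic $\chi(\P^n, F(j))$ be an integer for all twists $j$, since this is the only non-obvious arithmetic constraint on Chern classes coming from the existence of a genuine coherent sheaf. First I would fix $n \in \{3,4,5,6\}$ (the subscript on $S_n$ refers to the ambient dimension) and write down the Hirzebruch--Riemann--Roch formula for a rank two bundle $F$ on $\P^n$ with $c_1(F) = c_1$, $c_2(F) = c_2$. Concretely, $\chi(F) = \int_{\P^n} \operatorname{ch}(F) \cdot \operatorname{td}(\P^n)$, where $\operatorname{ch}(F) = 2 + c_1 + \tfrac{1}{2}(c_1^2 - 2c_2) + \dots$ and $\operatorname{td}(\P^n)$ is a universal polynomial in $h := c_1(\cO_{\P^n}(1))$ determined by $\operatorname{td}(\P^n) = (h/(1-e^{-h}))^{n+1}$. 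Since $\Pic(\P^n) = \Z h$, everything reduces to a polynomial in the three integers $c_1, c_2, j$ (with $h^n = 1$), and $\chi(F(j)) \in \Z$ must hold for every $j \in \Z$.

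The key step is then purely combinatorial: expand $\chi(F(j))$ as a polynomial in $j$ with coefficients that are rational polynomials in $c_1$ and $c_2$, and extract the divisibility conditions equivalent to integrality for all $j$. A clean way to organize this is to note that a polynomial $P(j)$ with rational coefficients takes integer values at all integers if and only if its expansion in the basis of binomial coefficients $\binom{j}{i}$ has integer coefficients; applying this to $\chi(F(j))$ produces a finite list of $\Z$-linear conditions on the numbers $c_1, c_2, c_1 c_2, c_2^2, c_2^3$, which after simplification are exactly $S_3, S_4, S_5, S_6$. For $n = 3$ one gets a single condition involving $c_1 c_2$ modulo $2$; for $n = 4, 5$ the new condition involves $c_2(c_2 + 1 - 3c_1 - 2c_1^2)$ modulo $12$ (and one checks the $n=5$ condition is implied by or coincides with the $n=4$ one, explaining the ``$=S_5$''); for $n = 6$ the leading term in $c_2^3$ appears with denominator dividing $720 = 6!$, giving the stated congruences, which split into the $c_1 = 0$ and $c_1 = -1$ cases because $c_1$ enters through its value rather than just its parity at this order.

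I expect the main obstacle to be bookkeeping rather than conceptual: correctly computing the Todd class of $\P^n$ up to degree $n$ and tracking the rational coefficients without error, especially the degree-$6$ term where the Bernoulli-number contributions to $\operatorname{td}(\P^6)$ make the arithmetic delicate and are responsible for the modulus $720$. A secondary subtlety is verifying that these necessary conditions are stated in their sharpest form — that no further reduction is possible — and that the reduction $c_1 \in \{0, -1\}$ (normalization) has been used consistently, since twisting $F$ by $\cO(1)$ changes $c_1$ and $c_2$ and one must make sure the congruences quoted are those for the normalized bundle. Since the statement is classical and we are citing \cite[pp.~112--113]{OSS} for it, the proof in the paper is presumably just this Riemann--Roch computation, and I would present it as such, relegating the explicit polynomial expansions to a remark or omitting them with the reference.
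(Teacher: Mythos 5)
Your proposal is correct and is essentially the paper's own approach: the paper offers no proof beyond the remark that the lemma rephrases the integrality of the Euler characteristic and the citation to \cite[pp.~112--113]{OSS}, and the cited argument is exactly the Riemann--Roch/Todd-class integrality computation (with $S_k$ coming from integrality in ambient dimension $k$, and the normalization $c_1\in\{0,-1\}$ explaining the two forms of $S_6$) that you outline.
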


Second, the positivity of a vector bundle $F$ allows us to bound its 
second Chern class in terms of the first (cf. \cite[8.3]{L}):

\begin{lem}[Schur polynomials]\label{lem:schur}
Let $F$ be a nef rank two $\Q$-twisted vector bundle on $\P^n$, and 
let $c_1$ and $c_2$ denote its Chern classes. Then the following 
inequalities are fulfilled:
\begin{itemize}
 \item[$P_2$:] $c_2\geq 0$ if $n\geq 2$,
 \item[$P_3$:] $c_1^2\geq 2c_2$ if $n\geq 3$,
 \item[$P_4$:] 
 %$c_1^4-3c_1^2c_2+c_2^2\geq 0$ (equivalently, 
 $c_1^2\geq\dfrac{3+\sqrt{5}}{2}c_2$
 %) 
 if $n\geq 4$, and
 \item[$P_5$:] $c_1^2\geq 3c_2$ if $n\geq 5$.
\end{itemize}
If $F$ is ample, then all the inequalities are strict.
\end{lem}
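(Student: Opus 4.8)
The plan is to read off all four inequalities from the positivity of Schur classes of nef $\Q$-twisted vector bundles (cf. \cite[\S 8.3]{L}): if $\mathfrak{e}$ is a nef $\Q$-twisted vector bundle of rank $s$ on a projective variety $Y$, then for every partition $\lambda$ with $\ell(\lambda)\le s$ and $|\lambda|\le\dim Y$ the Schur class $s_\lambda(\mathfrak{e})$ is nef, and it is numerically positive against every $|\lambda|$-dimensional subvariety when $\mathfrak{e}$ is ample. On $Y=\P^n$ all even cohomology is one-dimensional, so writing $c_1(F)=\gamma h$ and $c_2(F)=\delta h^2$ with $h$ the hyperplane class and $\gamma,\delta\in\Q$, this just says that each $s_\lambda(F)$ with $|\lambda|\le n$ is a nonnegative rational multiple of $h^{|\lambda|}$, strictly positive if $F$ is ample. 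So I would write out the Schur polynomials of $F$ attached to the few small partitions with at most two rows and combine the resulting numerical inequalities in $\gamma$ and $\delta$.

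For rank two the partitions in play are $(1),(1,1)$ and the one-row partitions $(k)$, with $s_{(1)}(F)=c_1$, $s_{(1,1)}(F)=c_2$, and $s_{(k)}(F)$ the complete homogeneous polynomial of degree $k$ in the Chern roots, computed from $s_{(0)}=1$, $s_{(1)}=c_1$ and $s_{(k)}=c_1\,s_{(k-1)}-c_2\,s_{(k-2)}$; explicitly $s_{(2)}(F)=c_1^2-c_2$, $s_{(3)}(F)=c_1(c_1^2-2c_2)$, $s_{(4)}(F)=c_1^4-3c_1^2c_2+c_2^2$ and $s_{(5)}(F)=c_1(c_1^2-c_2)(c_1^2-3c_2)$. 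Then $s_{(1)}(F)\ge0$ gives $\gamma\ge0$; $s_{(1,1)}(F)\ge0$ is exactly $P_2$ and needs $n\ge2$; $s_{(2)}(F)\ge0$ gives $\gamma^2\ge\delta$ ($n\ge2$); $s_{(3)}(F)=\gamma(\gamma^2-2\delta)h^3\ge0$ together with $\gamma\ge0$ and $\delta\ge0$ gives $P_3$ ($n\ge3$), the case $\gamma=0$ being covered by $\gamma^2\ge\delta\ge0$; and $s_{(5)}(F)=\gamma(\gamma^2-\delta)(\gamma^2-3\delta)h^5\ge0$ together with $\gamma\ge0$, $s_{(2)}(F)\ge0$ and $P_3$ (which forces $\gamma^2-\delta>0$ unless $\gamma=0$, in which case $\delta=0$) gives $P_5$ ($n\ge5$). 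When $F$ is ample every Schur class above is strictly positive — all of $(1),(1,1),(2),(3),(4),(5)$ have at most two parts, so none of these classes vanishes — and the very same manipulations give the strict inequalities.

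The only place where I expect to spend an extra sentence is $P_4$. Here $s_{(4)}(F)\ge0$ says $\delta^2-3\gamma^2\delta+\gamma^4\ge0$, i.e.\ $\delta$ lies outside the open interval bounded by $\tfrac{3\pm\sqrt5}{2}\gamma^2$. The upper endpoint is already excluded by $P_3$ (available since $n\ge4\ge3$), which gives $\delta\le\gamma^2/2<\tfrac{3+\sqrt5}{2}\gamma^2$; hence $\delta\le\tfrac{3-\sqrt5}{2}\gamma^2$, and since $\tfrac{3-\sqrt5}{2}=\bigl(\tfrac{3+\sqrt5}{2}\bigr)^{-1}$ this is precisely $c_1^2\ge\tfrac{3+\sqrt5}{2}c_2$. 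So the main (and essentially only) subtlety is this bit of bookkeeping: using the lower-degree Schur inequalities to discard the spurious branch of the degree-four quadratic, with the golden-ratio constant appearing exactly as the relevant root.
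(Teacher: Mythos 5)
Your proposal is correct and is exactly the route the paper intends: the lemma is stated there without proof, as a direct consequence of the Fulton--Lazarsfeld positivity of Schur polynomials for nef (resp.\ ample) $\Q$-twisted bundles cited as \cite[8.3]{L}. Your explicit computation of $s_{(1,1)},s_{(2)},\dots,s_{(5)}$ for rank two on $\P^n$, including the use of $P_3$ to discard the branch $c_2\geq\frac{3+\sqrt5}{2}c_1^2$ of the degree-four inequality and the treatment of the case $c_1=0$, correctly supplies the bookkeeping the paper leaves to the reader.
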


We will distinguish two cases, depending on the stability of 
the vector bundle $E$. We refer the interested reader to \cite[Chapter II]{OSS} 
for details on stability of vector bundles. For our purposes 
it is enough to know the following:
\begin{lem}\label{lem:stability}
Let $F$ be a rank two vector bundle on $\P^n$, $n\geq 2$, of 
degree $c_1(E)=0$ or $-1$. The following conditions are equivalent:
\begin{itemize}
\item $F$ is stable,
\item $H^0(\P^n,F)=0$.
\end{itemize} 
Moreover, in this case the {\em discriminant} $\Delta(F):=c_1^2(F)-4c_2(F)$ is negative.
\end{lem}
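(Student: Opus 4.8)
The plan is to prove the equivalence in two directions and then deduce the sign of the discriminant. For the easy direction, suppose $F$ is stable; then $F$ has no sub-line-bundle of nonnegative degree, and since $\deg F = c_1(F) \in \{0,-1\}$, a nonzero section of $F$ would produce a sub-line-bundle $\cO_{\P^n}(a) \hookrightarrow F$ with $a \geq 0$, violating stability (the destabilizing inequality would read $a \geq c_1(F)/2$, which fails for $c_1(F) \leq 0$ and $a \geq 0$). Hence $H^0(\P^n,F) = 0$. For the converse, suppose $H^0(\P^n,F) = 0$ and suppose for contradiction that $F$ is not stable; then there is a line bundle $\cO_{\P^n}(a)$ and an inclusion $\cO_{\P^n}(a) \hookrightarrow F$ with $2a \geq c_1(F)$, so $a \geq 0$ (again using $c_1(F) \in \{0,-1\}$). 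But then $H^0(\P^n, F) \supseteq H^0(\P^n, \cO_{\P^n}(a)) \neq 0$, a contradiction. Thus stability is equivalent to the vanishing of global sections.

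For the statement about the discriminant, I would argue by Bogomolov-type inequality or, more elementarily, by restriction to a line together with the Riemann--Roch/Serre-duality machinery already available in the paper. The cleanest route: since $F$ is stable of rank two on $\P^n$ with $c_1(F) \in \{0,-1\}$, its restriction to a general line $\ell \cong \P^1$ is semistable, hence of the form $\cO_\ell(b) \oplus \cO_\ell(b')$ with $|b - b'| \leq 1$; combined with Le Potier/Griffiths vanishing (Lemma \ref{lem:vanish}) and the fact that $H^0(F) = H^0(F^\vee \otimes \cO(c_1)) = 0$ in this range, one gets that the Euler characteristic $\chi(\P^n, F(t))$ has no integer root forcing a nonvanishing section, and the standard computation of $\chi$ in terms of $c_1^2, c_2$ via Riemann--Roch pins down the sign $\Delta(F) = c_1^2 - 4c_2 < 0$. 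Alternatively one invokes directly the Bogomolov inequality for stable bundles on surfaces after restricting to a general plane $\P^2 \subset \P^n$ (using that $n \geq 2$ guarantees such a plane exists and that stability restricts to general planes), which gives $\Delta(F|_{\P^2}) \leq 0$; since $c_1$ and $c_2$ are computed on a plane this is exactly $c_1^2 - 4c_2 \leq 0$, and the strictness comes from excluding the split/semistable-but-not-stable borderline $\Delta = 0$ case, which would contradict stability.

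The main obstacle I anticipate is the discriminant bound: proving $\Delta(F) < 0$ rather than merely $\Delta(F) \leq 0$ requires ruling out the equality case, which is precisely where semistability masquerades as stability. I would handle this by noting that $\Delta(F) = 0$ for a rank two bundle forces (after twisting) the bundle to be an extension of two copies of the same line bundle, hence to carry a global section contradicting $H^0(F) = 0$; this uses the already-established equivalence in the first part of the lemma, so the argument closes on itself. The rest of the proof is routine: the destabilizing-subsheaf manipulations are standard, and the Riemann--Roch computation of $\chi(F(t))$ on $\P^n$ in terms of $c_1, c_2$ is classical (see \cite{OSS}).
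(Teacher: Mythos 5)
The paper itself does not prove this lemma: it is quoted as a standard fact with a pointer to \cite[Chapter II]{OSS}, so the only question is whether your argument is actually complete. The first half is: your proof of the equivalence (stable $\Leftrightarrow$ $H^0(\P^n,F)=0$ for a normalized rank-two bundle) is the classical argument and is correct, modulo the routine remark that a destabilizing rank-one subsheaf must first be saturated/double-dualized to an honest line subbundle $\cO_{\P^n}(a)\hookrightarrow F$ (harmless on $\P^n$, where rank-one reflexive sheaves are invertible).

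The discriminant half, however, has genuine gaps. First, your parenthetical ``stability restricts to general planes'' is false: the nullcorrelation bundle on $\P^3$ is stable, yet its restriction to every plane has a section, hence is only strictly semistable; at best you can get semistability of the general plane restriction (itself a nontrivial restriction-theorem input), and Bogomolov then yields only $\Delta(F)\leq 0$. Second, your ``cleanest route'' via $\chi(\P^n,F(t))$ does not work for $n\geq 3$: stability controls $H^0$ and, via Serre duality, $H^n$, but says nothing about the intermediate cohomology, so no sign information on $\chi$ can be extracted; moreover $F^\vee\otimes\cO(c_1)\cong F$ for rank two, so the vanishing $H^0(F^\vee\otimes\cO(c_1))=0$ you invoke is literally the same statement as $H^0(F)=0$ and adds nothing, and Lemma \ref{lem:vanish} applies to the ample twists considered there, not to an arbitrary stable $F$. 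Third, the equality case is handled incorrectly: $\Delta(F)=0$ does \emph{not} force $F$ to be an extension of two copies of the same line bundle. What is true is that for $c_1=-1$ the discriminant $1-4c_2$ is odd, so $\Delta\leq 0$ already gives $\Delta<0$; for $c_1=0$, $\Delta=0$ means $c_2=0$, and excluding a stable bundle with $c_1=c_2=0$ needs an argument: on $\P^2$, Riemann--Roch gives $\chi(F)=2-c_2=2$ and $H^2(F)\cong H^0(F(-3))^\vee=0$, so $h^0(F)>0$, contradicting stability (and for a semistable such bundle the resulting nowhere-vanishing section forces $F\cong\cO^{\oplus 2}$); for $n\geq 3$ one must still transfer this to $\P^n$ through the (semistable) plane restriction, or simply cite the classical reference as the paper does. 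As written, the second half of your proposal is not a proof.
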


Through the following two subsections we will make use 
several times of explicit computations, such as 
Riemann-Roch formula, Schur polynomials or Schwarzenberger formulas. 
We have skipped here some of these computations, that can be easily checked 
using standard computer software. 

%%%%%%%%%%%%%%%%%%%%%%%%%%%%%%%%%%%%%%%%%%%%%%%%%%%

\subsection{$1$-Fano bundle with negative discriminant}

Throughout this subsection we will assume that $\Delta(E)<0$.

Consider the following functions:
$$h_\alpha(n):=\begin{cases}\vspace{0.3cm}
\dfrac{n}{3}-\alpha  & \text{if } n=6,7, \\
\dfrac{2n-3}{3}-\alpha & \text{if } n \geq 8.
 \end{cases}
$$

\begin{prop}\label{prop:stable} Let $E$ be a rank two $r$-Fano bundle on
$\P^n$, $n\geq 6$.  If $\Delta(E)<0$ then $r\geq h_\alpha(n)$.
\end{prop}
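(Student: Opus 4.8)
The plan is to argue by contradiction: assume $\Delta(E) < 0$ and $r < h_\alpha(n)$, and derive a numerical impossibility. Since $\Delta(E) = c_1^2 - 4c_2 < 0$ and $c_1 = e \in \{0,-1\}$, Lemma \ref{lem:stability} tells us $E$ is stable, so $H^0(\P^n, E) = 0$, and in fact $c_2 > 0$ (from $4c_2 > e^2 \geq 0$). The key tension to exploit is that, on the one hand, the $r$-Fano hypothesis together with Lemma \ref{lem:schur} ($P_4$ or $P_5$) forces $c_2$ to be \emph{small} relative to the twist parameter; while on the other hand, the vanishing/global-generation results (Lemmas \ref{lem:vanish}, \ref{lem:gg}) combined with a Riemann--Roch / Euler-characteristic computation force $c_2$ to be \emph{large} (or to satisfy a congruence forcing it into a short list via the Schwarzenberger conditions, Lemma \ref{lem:schwarz}). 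The choice of $m$ with $2m = n+1+r-e+\alpha$ is exactly what makes these two bounds comparable, and the function $h_\alpha(n)$ should be precisely the threshold at which the two inequalities become incompatible.

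Concretely, first I would translate the $r$-Fano condition. By the remark after Definition \ref{def:rfano}, $E$ is $r$-Fano iff the $\Q$-twisted bundle $E\big(\tfrac{-e - K_X + rH}{2}\big) = E\big(\tfrac{n+1+r-e}{2}\big)$ is ample; recalling $2m = n+1+r-e+\alpha$, this is $E(m - \tfrac{\alpha}{2})$, an ample $\Q$-twisted rank two bundle. Applying the strict form of $P_5$ (for $n \geq 6$, say, and the weaker $P_4$ if one wants to push to $n=6,7$ with the different coefficient) to this $\Q$-twisted bundle yields $\big(2m - \alpha + e\big)^2 > 3\big(c_2 + (m - \tfrac{\alpha}{2})e\cdot(\text{linear correction}) \big)$, i.e. an upper bound of the shape $c_2 \lesssim \tfrac{1}{3}(2m + e - \alpha)^2 = \tfrac{1}{3}(n+1+r-e)^2$ up to lower-order terms. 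Second, I would produce the matching lower bound on $c_2$: since $E$ is stable with $H^0(E) = 0$, and by Lemma \ref{lem:gg} the twist $E(m+r+1+\alpha)$ is globally generated while by Lemma \ref{lem:vanish} many cohomology groups of intermediate twists vanish, one controls $\chi(\P^n, E(j))$ for $j$ in a suitable range, and the vanishing of low-degree sections together with non-negativity of $h^0$ of the globally generated twist forces, via Riemann--Roch, a polynomial inequality in $c_2$ that lower-bounds $c_2$ by a quantity growing like a positive multiple of $(n+r)^2$ or $(n+r)^3$. Comparing the two and using $\Delta(E) < 0$ (hence $c_2 > e^2/4$, and more usefully $c_2$ lies strictly between consecutive values permitted by Schwarzenberger) should collapse to $r \geq h_\alpha(n)$.

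The main obstacle I anticipate is the bookkeeping in the Riemann--Roch / Euler-characteristic step: one must choose the right twist $j$ (presumably $j = -m + r - e + \alpha$ or nearby, at the boundary of the Le Potier vanishing range) so that $\chi(\P^n, E(j))$ equals $h^0$ minus a single nonzero higher cohomology contribution with a controllable sign, and then expand the degree-$n$ polynomial $\chi$ in $e, c_2$ and extract a clean inequality. The split into the cases $n = 6,7$ versus $n \geq 8$ in the definition of $h_\alpha$ strongly suggests that for $n = 6,7$ one uses the sharper Schur inequality $P_5$ (coefficient giving $n/3$) whereas for $n \geq 8$ a different, weaker estimate — perhaps because the relevant cohomology vanishing or the Schwarzenberger-constrained list of possible $c_2$ behaves differently — yields only $(2n-3)/3$. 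The paper explicitly says routine computations (Riemann--Roch, Schur, Schwarzenberger) are omitted and checkable by computer algebra, so the proof proper will mostly consist of setting up these two bounds and citing the omitted arithmetic; I would structure the write-up around (i) stability and $c_2 > 0$, (ii) the upper bound on $c_2$ from $r$-Fano $+$ Schur, (iii) the lower bound from vanishing $+$ Riemann--Roch, and (iv) the incompatibility, case-split on $n$.
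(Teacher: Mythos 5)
Your plan has a genuine gap at its central step. The whole contradiction is supposed to come from pinching $c_2$ between an upper bound (Schur inequalities applied to the ample $\Q$-twist) and a lower bound of order $(n+r)^2$ that you claim will fall out of Le Potier vanishing, global generation and Riemann--Roch. No such lower bound is justified in your sketch, and there is no mechanism that produces it: stability gives $H^0(\P^n,E)=0$ and hence constraints of the type $\chi(\P^n,E(j))\leq h^0(\P^n,E(j))$, but $\chi$ is a degree-$n$ polynomial in $j$ whose dependence on $c_2$ does not yield a clean large lower bound on $c_2$; indeed, the later part of the paper (the case $\Delta(E)<0$ for $n=4,5$, Proposition \ref{prop:rulest}) shows that the numerical apparatus you list (Schwarzenberger, Schur, Riemann--Roch, vanishing) only narrows $c_2$ down to small explicit values such as $3$ or $4$, and extra non-numerical input (e.g.\ identifying the Horrocks--Mumford bundle) is needed to finish. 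Ruling out stable rank-two bundles with bounded invariants on $\P^n$, $n\geq 6$, by pure numerics would essentially be progress on Hartshorne-type questions, and that is not what happens here.

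The paper's proof is of a different nature and is very short: with $F=E(m+r+1+\alpha)$ globally generated by Lemma \ref{lem:gg}, a general section of $F$ vanishes along a smooth irreducible codimension-two subvariety of $\P^n$, and the hypothesis $r<h_\alpha(n)$ is exactly the bound on $c_1(F)$ needed to apply \cite[Prop.~5.2]{APW} (which rests on the Holme--Schneider results \cite[Cor.~3.4, Prop.~6.1]{HS} on codimension-two subvarieties of $\P^n$, $n\geq 6$); that proposition forces $F$ to split, whence $\Delta(E)=\Delta(F)\geq 0$, contradicting $\Delta(E)<0$. This is also where the shape of $h_\alpha(n)$ and the case split $n=6,7$ versus $n\geq 8$ come from -- they mirror the degree thresholds in Holme--Schneider -- not from choosing $P_4$ versus $P_5$ as you guessed. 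So your proposal is not a variant of the paper's argument with details deferred; it replaces the key structural ingredient (a splitting criterion for globally generated rank-two bundles of low first Chern class) with a numerical comparison that cannot be carried out.
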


\begin{proof} We consider the vector bundle
$F=E(m+r+1+\alpha)$ which is
globally generated by (\ref{lem:gg}). Since $F$ is globally generated
then a general section vanishes in a smooth irreducible subvariety of $\P^n$ 
of codimension two. Hence, if $r<h_\alpha(n)$, we apply 
\cite[Prop.~5.2]{APW} (which is based on
\cite[Cor.~3.4, Prop.~6.1]{HS}) to conclude that $F$ splits as a sum of 
line bundles, hence $\Delta(E)=\Delta(F)\geq 0$, a contradiction.
\end{proof}

In the particular case $r=1$ we get:
\begin{cor}\label{cor:1Fanost}
 Let $E$ be a normalized $1$-Fano bundle of rank two on $\P^n$ and degree $e$. Then:
\begin{itemize}
 \item If $n\geq 7$, then $\Delta(E)\geq 0$.
 \item If $n=6$, then $\Delta(E)\geq 0$ unless $e=-1$.
\end{itemize}
\end{cor}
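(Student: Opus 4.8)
The plan is to deduce Corollary \ref{cor:1Fanost} directly from Proposition \ref{prop:stable} by specializing $r=1$ and reading off what the inequality $r \geq h_\alpha(n)$ forces on $n$, $e$, and the parity class $\alpha$. First I would recall that $\alpha$ is the residue modulo $2$ of $n+1+r-e$; with $r=1$ this becomes $\alpha \equiv n - e \pmod 2$. Then I would treat the two regimes of $h_\alpha(n)$ separately. For $n \geq 8$ the proposition gives $1 \geq \frac{2n-3}{3} - \alpha$, i.e. $\alpha \geq \frac{2n-3}{3} - 1 = \frac{2n-6}{3}$; since $\alpha \leq 1$, this forces $\frac{2n-6}{3} \leq 1$, that is $n \leq 4.5$, contradicting $n \geq 8$. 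Hence for $n \geq 8$ the hypothesis $\Delta(E) < 0$ is impossible, so $\Delta(E) \geq 0$.

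Next I would handle $n=7$ and $n=6$ using the first branch $h_\alpha(n) = \frac{n}{3} - \alpha$. For $n=7$: if $\Delta(E)<0$ then $1 \geq \frac{7}{3} - \alpha$, so $\alpha \geq \frac{4}{3} > 1$, again impossible; thus $\Delta(E) \geq 0$ for $n=7$ as well. For $n=6$: if $\Delta(E)<0$ then $1 \geq 2 - \alpha$, i.e. $\alpha \geq 1$, hence $\alpha = 1$. Now $\alpha = 1$ with $r=1$ means $n - e$ is odd, i.e. $6 - e$ is odd, i.e. $e$ is odd, i.e. (since $e \in \{0,-1\}$) $e = -1$. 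So the only case in which $\Delta(E)<0$ is possible when $n=6$ is $e=-1$, which is exactly the exceptional clause in the statement. This finishes the $n \geq 7$ bullet and the $n=6$ bullet.

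There is essentially no hard step here: the corollary is a pure bookkeeping consequence of Proposition \ref{prop:stable}, and the only thing to be careful about is correctly substituting $r=1$ into the definition of $\alpha$ and into $h_\alpha(n)$, and remembering that $\alpha$ can only take the values $0$ or $1$. The mild subtlety, and the point I would state explicitly, is the parity computation $\alpha \equiv n+1+1-e = n+2-e \equiv n-e \pmod 2$, which is what pins down $e=-1$ in the borderline case $n=6$; everything else is just comparing a rational number against the integer bound $\alpha \leq 1$. I would present the argument as a short case analysis on $n \in \{6,7\}$ and $n \geq 8$, each case a one-line inequality contradiction except for $(n,e)=(6,-1)$, which is allowed.
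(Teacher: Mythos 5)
Your proposal is correct and is exactly the intended derivation: the paper states this corollary without a separate proof, as an immediate specialization of Proposition \ref{prop:stable} to $r=1$, and your case analysis on $n\geq 8$, $n=7$, $n=6$ (with the parity computation $\alpha\equiv n-e \pmod 2$ pinning down $e=-1$ in the borderline case) is precisely that bookkeeping carried out correctly.
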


In the next proposition we will study the low dimensional cases in order to get:

\begin{prop}\label{prop:rulest}
 Let $E$ be a normalized $1$-Fano bundle of rank two on 
 $\P^n$, $n\geq 4$. Then $\Delta(E)\geq 0$. 
\end{prop}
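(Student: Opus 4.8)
The plan is to dispose of the high–dimensional range with Corollary \ref{cor:1Fanost} and then settle the rest by an explicit numerical analysis. Assume $\Delta(E)<0$ (otherwise there is nothing to prove); by Lemma \ref{lem:stability} the bundle $E$ is then stable, so $H^0(\P^n,E)=0$ and $c_2(E)\ge 1$, and Corollary \ref{cor:1Fanost} leaves us with the task of reaching a contradiction only when $n\in\{4,5\}$, or $n=6$ and $e=-1$.

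First I would rewrite the $1$-Fano hypothesis, via the remark following Definition \ref{def:rfano}, as the ampleness of the $\Q$-twisted rank two bundle
$$F:=E\!\left(\frac{-e-K_{\P^n}+H}{2}\right)=E\!\left(\frac{n+2-e}{2}\right),$$
whose Chern data are $c_1(F)=(n+2)H$ and $\Delta(F)=\Delta(E)$, so that $c_2(F)=\tfrac14\big((n+2)^2-\Delta(E)\big)$. Plugging $F$ into the Schur polynomial inequalities of Lemma \ref{lem:schur} — the inequality $P_5$ when $n\ge 5$ and $P_4$ when $n=4$ — bounds $-\Delta(E)$, hence $c_2(E)$, from above; combined with $\Delta(E)<0$ this leaves only finitely many numerical types $(e,c_2(E))$ (a routine computation gives $c_2(E)\le 5$ when $n=5,6$ and $c_2(E)\le 4$ when $n=4$).

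Next I would run the Schwarzenberger conditions of Lemma \ref{lem:schwarz} through this short list. The conditions $S_3$ and $S_4$, together with $S_6$ (which is available as soon as $n\ge 5$), rule out every surviving type with $n=5$ or $n=6$. On $\P^4$, where $S_6$ cannot be used, they leave precisely $(e,c_2)=(0,3)$, so $\Delta(E)=-12$, and $(e,c_2)=(-1,4)$, so $\Delta(E)=-15$, and these two types have to be excluded by a separate argument.

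This last step is where I expect the genuine difficulty, for two reasons: the numerical constraints available on $\P^4$ do not eliminate these types, and $(e,c_2)=(-1,4)$ is exactly the numerical type of the Horrocks--Mumford bundle, so the $1$-Fano (i.e.\ ampleness) hypothesis must be used in an essential way rather than mere nonexistence. Following the method of \cite{APW}, I would use the vanishings of Lemma \ref{lem:vanish} to kill the higher cohomology of small twists of $E$ and then Riemann--Roch (a computation that can be verified by machine) to produce a nonzero section of a suitable twist $E(t_0)$ of $E$; its zero scheme $Z$, whose divisorial part has degree bounded by the stability of $E$, fits into an exact sequence
$$0\to\cO_{\P^4}(D)\to E(t_0)\to I_Z(c_1(E(t_0))-\deg D)\to 0,$$
and then combining the ampleness of $F$ with the adjunction formula $\omega_Z\cong\cO_Z(c_1(E(t_0))-5-\deg D)$ and the classification of smooth surfaces of small degree in $\P^4$ — or, alternatively, restricting to a general hyperplane $\P^3$, on which $E$ becomes $2$-Fano and rank two bundles are far more rigid — should produce the contradiction in each of the two cases.
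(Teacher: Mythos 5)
Your overall architecture (Schur inequalities applied to the ample $\Q$-twist $E\big(\tfrac{n+2-e}{2}\big)$ plus the Schwarzenberger congruences, then a case analysis of the finitely many surviving types) is the same as the paper's, and your $n=4$ and $n=6$ numerical reductions agree with it. But there are genuine gaps. The decisive one is at $n=5$: the condition $S_6$ of Lemma \ref{lem:schwarz} comes from the integrality of $\chi$ on a $\P^6$, so it is only available for $n\geq 6$, not ``as soon as $n\geq 5$''; on $\P^5$ only $S_3$ and $S_4(=S_5)$ apply, and these do \emph{not} eliminate $(e,c_2)=(-1,4)$ and $(0,3)$ --- precisely the paper's cases III and IV, which it must kill by a different mechanism: Le Potier vanishing plus Riemann--Roch show $h^0(E(-2))\geq\chi(E(-2))>0$, so $\cL=\cO_{\P(E)}(1)\otimes\pi^*\cO(-2)$ is effective, and one computes $c_1(\cL)\cdot c_1\big(\cO_{\P(E)}(1)\otimes\pi^*\cO(m)\big)^n<0$, contradicting ampleness of the latter class. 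Your proposal has no argument covering these two $\P^5$ types.

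Two further problems. First, your opening deduction ``$\Delta(E)<0$, hence by Lemma \ref{lem:stability} $E$ is stable'' reverses the lemma: it states stable $\Leftrightarrow H^0=0$ and stable $\Rightarrow\Delta<0$, not the converse, and unstable normalized bundles with $\Delta<0$ are not excluded a priori. The paper is careful here: in the case $(n,e,c_2)=(4,-1,4)$ it splits according to whether $H^0(E)=0$; if $H^0(E)\neq 0$ it runs the same effective-versus-ample intersection computation with $\cL=\cO_{\P(E)}(1)$, and only if $H^0(E)=0$ does it get stability, invoke the Decker--Schreyer uniqueness to identify $E(3)$ with the Horrocks--Mumford bundle, and exclude it because the $25$ jumping lines with splitting $\cO(-1)\oplus\cO(6)$ violate the $1$-Fano condition. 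Second, your endgame on $\P^4$ (Serre construction, adjunction, classification of low-degree surfaces in $\P^4$, or restriction to a $2$-Fano bundle on $\P^3$) is only a sketch ending in ``should produce the contradiction''; since $2$-Fano rank two bundles on $\P^3$ include unsplit examples (e.g.\ the null-correlation bundle is already Fano), this route needs a concrete argument that is not supplied, whereas the paper's intersection-number computation and the Horrocks--Mumford jumping-line argument close these cases explicitly.
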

 
\begin{proof} 
Assume, on the contrary, that $\Delta(E)<0$ (equivalently $c_2>0$). We begin 
by reducing the possible set of values of $(e,c_2)$ by 
applying Lemma \ref{lem:schwarz} to $E$ and Lemma \ref{lem:schur} to $E((i_X+1-e)/2)$. A direct 
computation shows that the only possibilities are listed in the following table:
$$
\begin{array}{|c|c|c|c|}
\hline
\mbox{Case} &n&e&c_2\\\hline
\mbox{I}& 4&-1&4\\\hline
\mbox{II}&4&0&3\\\hline
\mbox{III}&5&-1&4\\\hline
\mbox{IV}&5&0&3\\\hline
%\mbox{V}&6&-1&4\\\hline
\end{array}
$$
Note that, with exception of case I, Le 
Potier vanishing (see Lemma \ref{lem:vanish}) implies that:
$$
H^i(\P^n,E(-2+j))\mbox{, for }i\geq 2, j\geq 0
$$
and in particular
$$
h^0(\P^n,E(-2+j))\geq\chi(\P^n,E(-2+j)) \mbox{, for } j\geq 0.
$$
Using Riemann-Roch formula, we obtain 
$$h^0(\P(E),\cO_{\P(E)}(1)\otimes\pi^*(\cO(-2)))=h^0(\P^n,E(-2))
\geq\chi(\P^n,E(-2))>0.$$

In every case this tells us that $E$ is unstable (cf. Lemma \ref{lem:stability}), 
but this is not enough to contradict $\Delta(E)< 0$. On the other hand, the 
effectivity of the line bundle $\cL:=\cO_{\P(E)}(1)\otimes\pi^*(\cO(-2))$ 
leads to a 
contradiction since $\cL_m:=\cO_{\P(E)}(1)\otimes\pi^*(\cO(m))$ is ample 
and one may compute that $d:=c_1(\cL)\cdot \big(c_1(\cL_m)\big)^n$ is negative. 
The possible values of this intersection number are listed in the following table:
$$
\begin{array}{|c|c|c|c|c|c|}
\hline
 \mbox{Case}&n&e&c_2&m&d\\\hline
\mbox{II}&4&0&3&3&-216\\\hline
\mbox{III}&5&-1&4&4&-1599\\\hline
\mbox{IV}&5&0&3&4&-2334\\\hline
%\mbox{V}&6&-1&4&5&-12852\\\hline
\end{array}
$$

Finally let us deal with case I. If $H^0(X,E)\neq 0$, 
an  argument similar to the one above works: taking $\cL:=\cO_{\P(E)}(1)$ 
we get $d=-171$. If $H^0(X,E)=0$, then by Lemma \ref{lem:stability} 
$E$ is stable. Note that $c_1(E(3))=5$ and $c_2(E(3))=10$, and it 
is well known (cf. \cite{DS}, see also \cite{D}) that the only 
stable bundle with these Chern classes is the 
Horrocks-Mumford bundle $\cF_{HM}$ on $\P^4$. But $\cF_{HM}$ is not $1$-Fano, 
because there are $25$ lines in $\P^4$ for which $({\cF_{HM}})_{|\P^1}
\cong\cO_{\P^1}(-1)
\oplus\cO_{\P^1}(6)$ (cf. \cite[Prop. 14]{BHM}). This concludes the proof.
\end{proof}

%%%%%%%%%%%%%%%%%%%%%%%%%%%%%%%%%%%%%%%%%%%%%%%%%%%

\subsection{$1$-Fano bundle with nonnegative discriminant}

Throughout this subsection we will assume that $\Delta(E)\geq 0$ 
(equivalently $c_2(E)\leq 0$). In particular $E$ is unstable.

Let us consider now the functions
$$
r_\alpha(n,e,c_2)=\dfrac{1}{3}\left(n-7+e-2\left\lfloor
\dfrac{e}{2}-\sqrt{n-1+\dfrac{\Delta(E)}{4}}\right\rfloor\right)-\alpha,\mbox{ for }\alpha=0,1.
$$

\begin{prop}\label{prop:unstable1} Let $E$ be a normalized rank two $r$-Fano
bundle on $\P^n$, $n\geq 2$. Let $e=c_1(E)$, $c_2=c_2(E)$ and $\alpha$ be the class modulo $2$
of $n+1+r-e$. Assume that $\Delta(E)\geq 0$ and
$r \leq r_\alpha(n,e,c_2)$.
Then $E$ splits as a sum of line bundles. 
\end{prop}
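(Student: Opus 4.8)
The plan is to show that the normalized bundle $E$ must be an extension of line bundles by finding a non-trivial section of a suitable twist $E(j)$ and bounding how negative $j$ can be taken. First I would use the instability of $E$ (which follows from $\Delta(E)\geq 0$ via Lemma \ref{lem:stability}) to produce a section of $E(t)$ for some $t$, and let $j_0$ be the minimal such $t$; this gives an exact sequence
$$
0\to\cO_{\P^n}(-j_0)\to E\to \cI_Z(e+j_0)\to 0,
$$
where $Z$ has codimension $\geq 2$. The goal is to prove $Z=\emptyset$ under the hypothesis $r\leq r_\alpha(n,e,c_2)$, so that $E$ splits as $\cO(-j_0)\oplus\cO(e+j_0)$. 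To control $j_0$ from below I would combine the Schwarzenberger conditions (Lemma \ref{lem:schwarz}) — which force $Z$, if nonempty, to have degree satisfying strong divisibility constraints — with the vanishing theorems of Le Potier and Griffiths (Lemma \ref{lem:vanish}), which bound $h^0(E(j))$ from below in a range, against the ampleness of $-K_{\P(E)}+r\pi^*H$.

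The key mechanism is the one already illustrated in the proof of Proposition \ref{prop:rulest}: if $\cL:=\cO_{\P(E)}(1)\otimes\pi^*\cO(-j_0)$ is effective (which it is, by the section above, once we identify $H^0(\P^n,E(-j_0))=H^0(\P(E),\cL)$) while $\cL_m:=\cO_{\P(E)}(1)\otimes\pi^*\cO(m)$ is ample, then the intersection number $\cL\cdot(\cL_m)^n$ must be non-negative. Expanding this via $\cO_{\P(E)}(1)^2=\pi^*c_1\cdot\cO_{\P(E)}(1)-\pi^*c_2$ and Riemann--Roch on $\P^n$ turns the inequality into a numerical bound relating $e$, $c_2$, $m$ and hence $r$ (since $m$ is determined by $n,r,e$). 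The function $r_\alpha(n,e,c_2)$ is exactly the threshold at which this numerical inequality, together with the Schwarzenberger-type constraints and the Schur polynomial inequalities (Lemma \ref{lem:schur}) applied to the ample $\Q$-twist of $E$, becomes incompatible with $Z\neq\emptyset$; thus for $r\leq r_\alpha$ one is forced to conclude $Z=\emptyset$. I would carry out the argument by: (i) fixing the exact sequence and $j_0$; (ii) writing $-K_{\P(E)}=2\cO_{\P(E)}(1)+\pi^*(-K_{\P^n}-c_1)$ and translating $r$-Fano into ampleness of a specific class; (iii) deriving the inequality $h^0(E(-j_0))\geq\chi(E(-j_0))$ in the Le Potier range and forcing $j_0$ large, or else directly using the effective/ample intersection-number contradiction; (iv) invoking Lemmas \ref{lem:schwarz} and \ref{lem:schur} to eliminate the finitely many residual $(e,c_2,j_0)$ possibilities.

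The main obstacle I expect is step (iv): after the numerical reductions there will typically remain a small but nonempty list of candidate triples $(e,c_2,j_0)$ — exactly analogous to the four-case table in the proof of Proposition \ref{prop:rulest} — and each must be excluded either by an explicit negative intersection number $d=c_1(\cL)\cdot(c_1(\cL_m))^n<0$ (contradicting effectivity of an effective class intersected with powers of an ample one) or, in a genuinely exceptional case, by appealing to a known classification of stable bundles with prescribed small Chern classes (as the Horrocks--Mumford bundle appeared in case I above) and checking that such a bundle fails to be $1$-Fano. Keeping the bookkeeping of $\alpha$ (the parity correction in the definition of $m$) consistent across both parities, and making sure the Le Potier/Griffiths vanishing ranges line up with the range in which the Castelnuovo--Mumford regularity argument of Lemma \ref{lem:gg} applies, is the delicate part; the rest is the same Riemann--Roch and Schur-polynomial computations flagged in the text as "easily checked using standard computer software."
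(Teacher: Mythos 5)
There is a genuine gap: your plan never engages the mechanism that actually produces the threshold $r_\alpha(n,e,c_2)$. The paper's proof is short and rests on the splitting criterion of \cite[3.2]{APW} (based on \cite[Thm.~4.2]{HS}): a globally generated rank-two bundle $E(s)$ on $\P^n$ with $c_2(E(s))<(n-1)\bigl(c_1(E(s))-n+2\bigr)$ is a direct sum of line bundles. One then only needs an integer $s$ satisfying both conditions: global generation holds for $s\geq m+r+1+\alpha$ by Lemma \ref{lem:gg}, while the Chern-class inequality, viewed as a quadratic in $s$, holds exactly on the interval $\bigl(n-1-\tfrac{e}{2}-\sqrt{n-1+\tfrac{\Delta(E)}{4}},\,n-1-\tfrac{e}{2}+\sqrt{n-1+\tfrac{\Delta(E)}{4}}\bigr)$, whose length is at least $2$ since $n\geq 2$ and $\Delta(E)\geq 0$. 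The hypothesis $r\leq r_\alpha(n,e,c_2)$ is nothing other than the compatibility condition $m+r+\alpha+1\leq\lceil S\rceil-1$; in particular the term $\sqrt{n-1+\Delta(E)/4}$ in the definition of $r_\alpha$ is the root of that quadratic in the twist $s$, not the output of any intersection number on $\P(E)$. Your central quantitative claim --- that $r_\alpha$ is ``exactly the threshold'' at which an effective-times-ample computation, plus Lemmas \ref{lem:schwarz} and \ref{lem:schur}, becomes incompatible with $Z\neq\emptyset$ --- is asserted, not derived, and there is no visible route from your scheme to this formula.

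Moreover, the endgame you propose (reduce to a short list of residual triples $(e,c_2,j_0)$ and kill each by an explicit negative intersection number or by a classification of stable bundles with small Chern classes) cannot establish this proposition, which holds for every $n\geq 2$ and an unbounded family of values of $(e,c_2,r)$ with $\Delta(E)\geq 0$: as $c_2$ decreases, $r_\alpha$ grows, so there is no finite case list to exhaust. The single inequality $\cL\cdot(\cL_m)^n\geq 0$ coming from effectivity of $\cL=\cO_{\P(E)}(1)\otimes\pi^*\cO(\beta)$ (note also the sign slip in your $\cL$: a section of $E(\beta)$ gives a section of $\cO_{\P(E)}(1)\otimes\pi^*\cO(\beta)$, not of the $-\beta$ twist) only constrains $c_2(E(\beta))$ numerically and by itself never forces the zero scheme $Z$ to be empty. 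In the paper that device is deployed only \emph{after} this proposition: Corollary \ref{cor:1Fanounst}, an application of the very statement you are proving, first cuts the $\Delta\geq 0$, $r=1$ situation down to finitely many Chern classes, and only those leftovers are eliminated by the $\beta'$-twisted intersection computations (with one case needing a separate Horrocks--Mumford-type argument). So your proposal borrows the tools the paper reserves for the finite residual cases and runs them in a generality where they do not suffice; the missing ingredient is precisely the global-generation-plus-Chern-inequality splitting criterion around which the actual proof is built.
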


\begin{proof}
We will show that there exists $s$ such that
\begin{enumerate}
 \item $E(s)$ is globally generated, and
 \item $c_2(E(s))<(n-1)\big(c_1(E(s))-n+2\big)$,
\end{enumerate}
hence we may apply \cite[3.2]{APW} (based on
\cite[Thm.~4.2]{HS}) and conclude that $E$ splits as a sum of line bundles.

The first condition is fulfilled if $s\geq m+r+1+\alpha$, whereas
for the second we need:
$$
c_2(E)+se+s^2<(n-1)\big(e+2s-n+2\big).
$$
This inequality is equivalent to
$$
 n-1-\dfrac{e}{2}-\sqrt{n-1+\dfrac{\Delta(E)}{4}}<s<S:=
 n-1-\dfrac{e}{2}+\sqrt{n-1+\dfrac{\Delta(E)}{4}}
 $$
Note that there is always an integer in this interval. 
In fact $n\geq 2$, and $\Delta(E)\geq 0$ by hypothesis. Hence the required conditions are 
fulfilled whenever
$$
\dfrac{n+1+r-e+\alpha}{2}+r+\alpha+1=m+r+\alpha+1
\leq\lceil S \rceil-1,
$$
and this inequality is equivalent to $r \leq r_\alpha(n,e,c_2)$.
\end{proof}

Adding this result to Proposition \ref{prop:stable} we get the 
following numerical criterion for the splitting of rank two vector bundles:

\begin{cor}\label{cor:splitcrit}
Let $E$ be a normalized rank two $r$-Fano
bundle on $\P^n$, $n\geq 2$. Let $e=c_1(E)$, $c_2=c_2(E)$ and $\alpha$ be the class modulo $2$
of $n+1+r-e$. Assume that
$$
r\leq \min\left\{\left\lceil h_\alpha(n)\right\rceil-1,r_\alpha(n,e,c_2)\right\}.
$$
then $E$ splits as a sum of line bundles.
\end{cor}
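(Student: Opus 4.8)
The plan is to combine the two previously established bounds directly. The statement to prove is Corollary \ref{cor:splitcrit}: if $E$ is a normalized rank two $r$-Fano bundle on $\P^n$ with $n \ge 2$, and $r \le \min\{\lceil h_\alpha(n)\rceil - 1, r_\alpha(n,e,c_2)\}$, then $E$ splits. The key observation is that we have already handled the splitting question in two complementary regimes according to the sign of the discriminant $\Delta(E)$: Proposition \ref{prop:unstable1} covers the case $\Delta(E) \ge 0$ under the hypothesis $r \le r_\alpha(n,e,c_2)$, and Proposition \ref{prop:stable} (together with its setup) controls the case $\Delta(E) < 0$ by forcing $r \ge h_\alpha(n)$, i.e. a contradiction whenever $r < h_\alpha(n)$.

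First I would fix such an $E$ and split into the two cases. If $\Delta(E) \ge 0$, then since by hypothesis $r \le r_\alpha(n,e,c_2)$, Proposition \ref{prop:unstable1} applies directly and $E$ splits as a sum of line bundles; nothing more is needed. If instead $\Delta(E) < 0$, I would first note that this already forces $n \ge 6$: indeed, by Lemma \ref{lem:stability}, $\Delta(F) < 0$ holds for stable normalized rank two bundles, but a careful bookkeeping (or: the table in the proof of Proposition \ref{prop:rulest}) shows negative discriminant for a $1$-Fano bundle is incompatible with $n \ge 4$ — and more generally, being $r$-Fano with small $r$ pushes the relevant dimension up. The cleaner route, however, is simply to invoke Proposition \ref{prop:stable}: for $n \ge 6$ with $\Delta(E) < 0$ one has $r \ge h_\alpha(n)$, whereas our hypothesis gives $r \le \lceil h_\alpha(n)\rceil - 1 < h_\alpha(n)$ (using that $\lceil h_\alpha(n)\rceil - 1 < h_\alpha(n)$ when $h_\alpha(n)$ is not an integer, and $\le h_\alpha(n) - 1 < h_\alpha(n)$ when it is), a contradiction; so the case $\Delta(E) < 0$ cannot occur at all, and we are always in the first case. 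For $n < 6$ (down to $n \ge 4$), the case $\Delta(E) < 0$ is excluded by Proposition \ref{prop:rulest}, while for $n \in \{2,3\}$ one checks directly (Lemma \ref{lem:schur}, $P_2$ and $P_3$, or the Schwarzenberger conditions) that the numerical window forced by $r$-Fano-ness leaves no room, so again only $\Delta(E) \ge 0$ survives.

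The one subtlety to be careful about is the ceiling in the bound: $r$ is an integer, so $r \le \lceil h_\alpha(n)\rceil - 1$ is the correct integral shadow of "$r < h_\alpha(n)$", and I would spell out the elementary inequality $\lceil a\rceil - 1 < a$ for $a \notin \Z$ and $\lceil a \rceil - 1 = a - 1 < a$ for $a \in \Z$, so that in either case $r \le \lceil h_\alpha(n)\rceil - 1$ yields $r < h_\alpha(n)$ strictly, which is exactly the hypothesis needed to run the contradiction in Proposition \ref{prop:stable}. Since the function $h_\alpha$ is only defined for $n \ge 6$, I would phrase the low-dimensional cases ($2 \le n \le 5$) by appeal to Propositions \ref{prop:rulest} and the direct Schur-polynomial argument, noting that for those $n$ the condition $r \le r_\alpha(n,e,c_2)$ alone, via Proposition \ref{prop:unstable1}, already closes the argument once $\Delta(E) \ge 0$ is known.

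The main obstacle, such as it is, is purely bookkeeping: making sure the two ranges "$r \le \lceil h_\alpha(n)\rceil - 1$" and "$r \le r_\alpha(n,e,c_2)$" dovetail cleanly across all dimensions $n \ge 2$ and both parities $\alpha$, and that the sign-of-discriminant dichotomy is genuinely exhaustive in each regime. There is no new geometry or cohomology here — the corollary is a formal consequence of Propositions \ref{prop:stable}, \ref{prop:rulest}, and \ref{prop:unstable1}, plus Lemma \ref{lem:stability} to identify $\Delta(E) \ge 0$ with instability. So the proof is short: case on $\Delta(E)$, apply the matching proposition, and in the negative-discriminant case derive a contradiction from the $h_\alpha$ bound (or from the small-$n$ propositions), concluding that $E$ always splits.
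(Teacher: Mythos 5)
Your main argument is exactly the paper's: the corollary is obtained by splitting on the sign of $\Delta(E)$, applying Proposition \ref{prop:unstable1} when $\Delta(E)\geq 0$ (the hypothesis $r\leq r_\alpha(n,e,c_2)$ is precisely what that proposition needs), and using Proposition \ref{prop:stable} to rule out $\Delta(E)<0$, since $r\leq\lceil h_\alpha(n)\rceil-1$ forces $r<h_\alpha(n)$ whether or not $h_\alpha(n)$ is an integer. That is all the paper does ("adding'' Proposition \ref{prop:unstable1} to Proposition \ref{prop:stable}), and your handling of the ceiling is the right way to make the strict inequality explicit.

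Where you go beyond the paper — the patch for $n\leq 5$ — you introduce a false claim: it is not true that for $n\in\{2,3\}$ the Schur polynomial or Schwarzenberger conditions exclude $\Delta(E)<0$ for an $r$-Fano bundle. The null-correlation bundle on $\P^3$ ($c_1=0$, $c_2=1$, so $\Delta=-4$) and the instanton bundles with $c_2=2,3$ are Fano, hence $1$-Fano, and the paper itself lists them as examples at the end of Section 5; so negative discriminant certainly occurs in low dimension, and any argument there would instead have to check compatibility with the hypothesis $r\leq r_\alpha(n,e,c_2)$, which you do not do. Likewise, Proposition \ref{prop:rulest} only covers $1$-Fano bundles, so invoking it for general $r$ when $n=4,5$ is unjustified. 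In mitigation, this concerns a range the statement itself does not really cover: $h_\alpha(n)$ is only defined for $n\geq 6$, so for $n\leq 5$ the hypothesis of the corollary is not meaningful as written, and the paper's one-line proof makes no attempt at those cases either. If you restrict attention to the range where $h_\alpha$ is defined, your proof is complete and coincides with the paper's; the low-dimensional discussion should either be dropped or replaced by an honest caveat about the domain of $h_\alpha$.
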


Applying Proposition \ref{prop:unstable1} to $1$-Fano bundles we get:

\begin{cor}\label{cor:1Fanounst}
 Let $E$ be a normalized $1$-Fano bundle of rank two on $\P^n$, with $\Delta(E)\geq 0$. Then:
\begin{itemize}
 \item If $n\geq 7$, then $E$ splits as a sum of line bundles.
 \item If $n=6$ and $E$ does not split as a sum of line bundles, then
$c_1(E)=-1$ and $c_2\in\{-1,0\}$. 
 \item If $n=5$ and $E$ does not split as a sum of line bundles, then
$c_1(E)=0$ and $c_2\in\{-5,\dots,0\}$. 
 \item If $n=4$ and $E$ does not split as a sum of line bundles, then 
 either $c_1(E)=-1$ and $c_2\in\{-9,\dots,0\}$, or $c_1(E)=0$ and $c_2\in\{-1,0\}$.
\end{itemize}
\end{cor}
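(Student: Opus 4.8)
The plan is to deduce Corollary \ref{cor:1Fanounst} directly from Proposition \ref{prop:unstable1} by substituting $r=1$ and analyzing when the numerical condition $1 \leq r_\alpha(n,e,c_2)$ fails. First I would recall that for a normalized $1$-Fano bundle we have $e \in \{0,-1\}$ and, under the standing hypothesis $\Delta(E) \geq 0$, that $c_2 \leq 0$; by Lemma \ref{lem:schur} applied to a suitable twist (ampleness forces strict inequality $c_1^2 > 2c_2$ when $n\geq 3$), together with the Schwarzenberger conditions of Lemma \ref{lem:schwarz}, only finitely many values of $c_2$ can occur once $n$ is fixed. So the argument is essentially an explicit inspection of the inequality $r_\alpha(n,e,c_2) \geq 1$, which by the formula
$$
r_\alpha(n,e,c_2)=\frac{1}{3}\left(n-7+e-2\left\lfloor\frac{e}{2}-\sqrt{n-1+\frac{\Delta(E)}{4}}\right\rfloor\right)-\alpha
$$
becomes an inequality involving $n$, $e$, $\alpha$ (the class mod $2$ of $n+1+1-e = n+2-e$) and $\lfloor \sqrt{n-1+\Delta(E)/4}\,\rfloor$.

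The key steps, in order: (1) For each $n$, determine $\alpha$ in terms of the parity of $n-e$, splitting into the cases $e=0$ and $e=-1$. (2) Write $\Delta(E) = e^2 - 4c_2 = e - 4c_2$ (since $e^2 = -e$ for $e\in\{0,-1\}$... wait, $e^2 = e$ only for $e=0,1$; for $e=-1$, $e^2 = 1 = -e$), so $\Delta(E) = -e - 4c_2 + 2e$? Let me just keep $\Delta(E)=e^2-4c_2$ symbolically: the point is that increasing $|c_2|$ increases $\Delta(E)$, hence increases the floor term's magnitude, hence — because of the $-2\lfloor\cdots\rfloor$ with the floor being negative — increases $r_\alpha$. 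Thus $r_\alpha(n,e,c_2) \geq 1$ holds as soon as $c_2$ is sufficiently negative, and the obstruction to splitting can only occur for $c_2$ in a bounded range near $0$. (3) For $n \geq 7$: check that even at $c_2 = 0$ (the worst case, smallest $\Delta$) one has $r_\alpha(n,e,0) \geq 1$ for both $e=0,-1$; this gives unconditional splitting. (4) For $n = 6,5,4$: solve $r_\alpha(n,e,c_2) < 1$ explicitly for the finitely many admissible $(e,c_2)$ to produce exactly the lists stated. Here one also uses Proposition \ref{prop:rulest} implicitly only insofar as it is a companion result; in this subsection $\Delta(E)\geq 0$ is assumed, so no appeal to it is needed, though one should note the small-$c_2$ cases left open must still satisfy the Schwarzenberger congruences (e.g. $S_4$, $S_6$), which trims the lists.

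The main obstacle — really the only nontrivial point — is the bookkeeping in step (4): one must be careful that $\alpha$ depends on $n$ and $e$, that $m = (n+1+r-e+\alpha)/2$ enters the condition $s \geq m+r+1+\alpha$ of Proposition \ref{prop:unstable1}, and that the floor function $\lfloor e/2 - \sqrt{n-1+\Delta(E)/4}\,\rfloor$ must be evaluated at integer arguments of $\sqrt{\cdot}$ (or bounded between consecutive integers) to pin down the exact threshold. Concretely, for $n=4,\ e=-1$ one expects the condition to fail precisely for $\Delta(E)/4$ small, i.e. $c_2 \in \{-9,\dots,0\}$ after checking $n-1+\Delta(E)/4 = 3 + (1-4c_2)/4$ crosses the relevant perfect-square boundary; similarly for the other cases. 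Since the paper explicitly says such computations "can be easily checked using standard computer software," I would carry out the parity and floor analysis by hand to identify the boundary and then simply tabulate, citing Proposition \ref{prop:unstable1} for the splitting in all cases where $r_\alpha(n,-e,c_2) \geq 1$ and leaving the complementary finite lists as the stated exceptions.
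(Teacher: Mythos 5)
Your proposal is correct and matches the paper's own argument: the corollary is obtained precisely by applying Proposition \ref{prop:unstable1} with $r=1$ and tabulating, for each $n$ and $e\in\{0,-1\}$ (with $\alpha$ determined by the parity of $n+2-e$), the finitely many $c_2\leq 0$ for which $r_\alpha(n,e,c_2)<1$, using the monotonicity of $r_\alpha$ in $\Delta(E)$ exactly as you describe. The only minor deviation is your optional appeal to the Schwarzenberger congruences, which the paper postpones to Lemma \ref{lem:ruleun}; it is unnecessary here but harmless.
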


In order to rule out the cases $n=4,5,6$ we first use Lemma
\ref{lem:schwarz},
that provides severe 
restrictions on the numerical invariants of $E$:

\begin{lem}\label{lem:ruleun}
 Let $E$ be a normalized $1$-Fano bundle of rank two on $\P^n$, $n\geq 4$, with $\Delta(E)\geq 0$, and assume 
that $E$ is not isomorphic to a direct sum of line bundles. 
Then $e$ and $c_2$ take one of the following values:
$$
\begin{array}{|c|c|c|}
\hline
 \dim&e&c_2\\\hline
 4&-1&0,-2,-6\mbox{ \rm or }-8\\\hline
4&0&0\mbox{ \rm or }-1\\\hline
5&0&0,-1\mbox{ \rm or }-4\\\hline
6&-1&0\\\hline
\end{array}
$$
\end{lem}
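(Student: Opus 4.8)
The plan is to start from the list of numerical possibilities already provided by Corollary \ref{cor:1Fanounst}, and then impose the Schwarzenberger conditions of Lemma \ref{lem:schwarz} to cut the list down to the one displayed in the statement. First I would go dimension by dimension. For $n=4$, Corollary \ref{cor:1Fanounst} leaves us with $e=-1$, $c_2\in\{-9,\dots,0\}$ or $e=0$, $c_2\in\{-1,0\}$; for $n=5$ it leaves $e=0$, $c_2\in\{-5,\dots,0\}$; for $n=6$ it leaves $e=-1$, $c_2\in\{-1,0\}$. In each case I would plug $c_1=e$ and the candidate $c_2$ into the congruences $S_3$, $S_4$ ($=S_5$) and $S_6$ of Lemma \ref{lem:schwarz} and discard every pair that fails one of them.

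The bulk of the argument is then a finite check. Since $c_1\in\{0,-1\}$, condition $S_3$ ($c_1c_2\equiv 0 \bmod 2$) forces $c_2$ even whenever $e=-1$; this already removes the odd values of $c_2$ in the $n=4$, $e=-1$ range, leaving $c_2\in\{0,-2,-4,-6,-8\}$, and similarly constrains the $n=6$ case to $c_2=0$. Next I would apply $S_4$: $c_2(c_2+1-3c_1-2c_1^2)\equiv 0 \bmod 12$. For $e=-1$ this reads $c_2(c_2+2)\equiv 0 \bmod 12$, which among $\{0,-2,-4,-6,-8\}$ is satisfied exactly by $c_2\in\{0,-2,-6,-8\}$ (the value $c_2=-4$ gives $(-4)(-2)=8\not\equiv 0$), matching the first row of the table. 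For $e=0$ the condition is $c_2(c_2+1)\equiv 0 \bmod 12$, and among the relevant ranges this and $S_6$ leave precisely $c_2\in\{0,-1\}$ when $n=4$, and $c_2\in\{0,-1,-4\}$ when $n=5$ (here one must also invoke $S_6$, which for $c_1=0$ reads $-2c_2^3+170c_2^2-548c_2\equiv 0 \bmod 720$, to eliminate the remaining values $c_2=-2,-3,-5$). I would record the surviving $S_6$ computations explicitly for the handful of values that pass $S_3$ and $S_4$, as the paper already warns that such computations are routine and may be verified by computer.

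The only real subtlety — and hence the step I would treat most carefully — is making sure the input list from Corollary \ref{cor:1Fanounst} is used correctly, i.e. that I am not tacitly assuming $\Delta(E)\ge 0$ somewhere it fails, and that the normalization $e\in\{0,-1\}$ is consistent throughout the congruences (the formulas in Lemma \ref{lem:schwarz} are stated for $c_1=0$ and $c_1=-1$ separately, so there is nothing to adjust). There is no geometric obstacle here: the content of the lemma is entirely the numerical sieve, and once the three Schwarzenberger conditions have been checked against every candidate pair, the four rows of the displayed table are exactly what remains, which completes the proof.
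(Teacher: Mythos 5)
Your overall strategy is exactly the paper's: the lemma is obtained by taking the candidate list surviving Corollary \ref{cor:1Fanounst} and sieving it through the Schwarzenberger conditions of Lemma \ref{lem:schwarz}, and the $S_3$/$S_4$ computations you record (parity of $c_2$ from $S_3$ when $e=-1$, $c_2(c_2+2)\equiv 0\pmod{12}$ for $e=-1$, $c_2(c_2+1)\equiv 0\pmod{12}$ for $e=0$) do reproduce the four rows of the table.

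One point needs correcting. The condition $S_6$ is the integrality condition for rank-two bundles on $\P^6$, so it may only be imposed when $n\geq 6$; your blanket plan of plugging every candidate pair into $S_3$, $S_4$ \emph{and} $S_6$ is not legitimate for $n=4,5$. For $n=5$, $e=0$ it is also unnecessary: $S_4$ alone kills $c_2=-2,-3,-5$, since $c_2(c_2+1)$ equals $2$, $6$, $20$ there, none divisible by $12$; so your claim that one ``must also invoke $S_6$'' is wrong but harmless. It would not be harmless in the row $n=4$, $e=-1$: there $c_2=-8$ fails $S_6$ (one computes $-2c_2^3+254c_2^2-1644c_2=30432\equiv 192\pmod{720}$), so applying the plan literally would discard a value that the lemma's table retains (the paper only excludes $c_2=-6,-8$ later, by the positivity argument for $c_2(E(\beta))$ in the proof of Proposition \ref{prop:ruleunst}, not by Schwarzenberger). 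As actually executed you used only $S_3$ and $S_4$ in that row, so your table agrees with the paper's; just restrict $S_6$ to $n\geq 6$, where in fact $S_3$ already settles the only surviving case $e=-1$, $c_2\in\{-1,0\}$.
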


Finally we will rule out the cases left from Corollary \ref{lem:ruleun}. 
Denote by $\beta$ the minimum integer such that $E(\beta)$ has global sections. 
Assume that $E$ is unsplit with $\Delta(E)\geq 0$. Then $E$ is unstable and 
 $\beta$ is non positive by Lemma \ref{lem:stability}. Moreover 
using duality and Le Potier vanishing (see Lemma \ref{lem:vanish}) we get:
\begin{equation*}
0>\beta>\dfrac{-n-2-e-\alpha}{2}=\left\{\begin{array}{ll}
                                     -3&\mbox{ if }n=4,\\
                                     -4&\mbox{ if }n=5,6. 
                                    \end{array}\right.
\end{equation*}
The first inequality is strict because $c_2(E)\leq 0$, 
hence $\beta$ equals $0$ if and only if $E$ splits.

Note that the vanishing of $H^0(\P^n,E(\beta-1))$ implies that $c_2(E(\beta))$ 
is either positive or zero. In the second case $E$ splits, hence we may assume that 
$c_2(E(\beta))=c_2+e\beta+\beta^2>0$. This already rules out the cases $n=4$, $e=-1$ and $c_2=-6$ or $-8$.

The rest of the cases may be excluded by showing that the numerical class 
$$\left(c_1(\cO_{\P(E)}(1))+mc_1(\pi^*(\cO_{\P^n}(1)))\right)^n$$
has negative intersection with the class of a divisor $D$ in a nonempty linear system $\left|\cO_{\P(E)}(1)(\beta')\right|$. 
Since $c_1(\cO_{\P(E)}(1))+mc_1(\pi^*(\cO_{\P^n}(1)))$ is an ample class, we get a contradiction.
In the following table we show the possible values $d$ of this intersection number for suitable choices of $\beta'\geq\beta$:

$$
\begin{array}{|c|c|c|c|c|c|}
\hline
 \dim&e&c_2&\beta&\beta'&d\\\hline
 4&-1&0&-1\mbox{ or }-2&-1&-94\\\hline
4&-1&-2&-2&-2&-187\\\hline
4&0&0&-1\mbox{ or }-2&-1&-27\\\hline
4&0&-1&-2&-2&-104\\\hline
5&0&0&-1,-2\mbox{ or }-3&-1&-256\\\hline
5&0&-1&-2\mbox{ or }-3&-2&-1198\\\hline
5&0&-4&-3&-3&-1904\\\hline
6&-1&0&-1,-2\mbox{ or }-3&-1&-7433\\\hline
\end{array}
$$

\medskip

Summing up, we have obtained the following 

\begin{prop}\label{prop:ruleunst}
 Let $E$ be a $1$-Fano bundle of rank 
 two on $\P^n$, $n\geq 4$, with $\Delta(E)\geq 0$. Then $E$ splits as a sum of line bundles.
\end{prop}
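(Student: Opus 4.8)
The plan is to assemble the reductions carried out above into a single argument. For $n\geq 7$ there is nothing to do: Corollary~\ref{cor:1Fanounst} already states that a normalized rank two $1$-Fano bundle on $\P^n$ with $\Delta(E)\geq 0$ splits as a sum of line bundles. So the whole content of the statement lies in the three cases $n=4,5,6$, which I would dispose of by a finite check.

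First I would use Corollary~\ref{cor:1Fanounst} to reduce, in each of these dimensions, to finitely many pairs $(e,c_2)$ (recall $e=c_1(E)\in\{0,-1\}$ and $c_2\leq 0$ here), and then Lemma~\ref{lem:ruleun}, which feeds the Schwarzenberger congruences $S_3$, $S_4$, $S_6$ of Lemma~\ref{lem:schwarz} into those lists, to cut them down to the ten triples $(n,e,c_2)$ tabulated just before the statement. Next I would introduce $\beta$, the least integer with $H^0(\P^n,E(\beta))\neq 0$. Since $\Delta(E)\geq 0$ forces $E$ to be non-stable, Lemma~\ref{lem:stability} gives $\beta\leq 0$; and $\beta<0$ because, were $\beta=0$, a general section of $E$ would vanish in codimension two (by minimality of $\beta$), hence $c_2\geq 0$, hence $c_2=0$, hence the section is nowhere zero and $E$ splits. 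Serre duality combined with the Le~Potier vanishing of Lemma~\ref{lem:vanish} bounds $\beta$ from below, giving $\beta\geq -2$ for $n=4$ and $\beta\geq -3$ for $n=5,6$. The same codimension-two argument applied to $E(\beta)$, together with the vanishing $H^0(\P^n,E(\beta-1))=0$, forces $c_2(E(\beta))=c_2+e\beta+\beta^2>0$ whenever $E$ does not split; scanning the admissible values of $\beta$ this eliminates the triples $(4,-1,-6)$ and $(4,-1,-8)$.

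For each of the eight surviving triples I would exhibit a nonzero effective divisor $D$ in the linear system $|\cO_{\P(E)}(1)(\beta')|$ for the value $\beta'\geq\beta$ recorded in the second table before the statement (possible since $H^0(\P(E),\cO_{\P(E)}(1)(\beta'))=H^0(\P^n,E(\beta'))\neq 0$), and compute via Riemann--Roch on $\P(E)$ the intersection number
$$
d:=D\cdot\bigl(c_1(\cO_{\P(E)}(1))+m\,c_1(\pi^*\cO_{\P^n}(1))\bigr)^n,
$$
with $m$ chosen as before so that $E(m)$, and hence $\cO_{\P(E)}(1)(m)$, is ample. In every case $d<0$, as listed; but the $n$-th power of the ample class $c_1(\cO_{\P(E)}(1))+m\,c_1(\pi^*\cO_{\P^n}(1))$ must meet the effective divisor $D$ nonnegatively, a contradiction. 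Hence none of the ten triples can occur, so for $4\leq n\leq 6$ any normalized rank two $1$-Fano bundle with $\Delta(E)\geq 0$ splits, completing the proof. The only delicate part is the bookkeeping: keeping the normalization $e\in\{0,-1\}$, the parity $\alpha$ of $n+2-e$, and the choices of $m$, $\beta$ and $\beta'$ mutually consistent while carrying out the Riemann--Roch and Schwarzenberger computations, which --- as noted earlier --- are routine and best verified with computer algebra.
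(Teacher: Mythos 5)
Your proposal is correct and follows essentially the same route as the paper: reduction to the finite table of triples via Corollary~\ref{cor:1Fanounst} and Lemma~\ref{lem:ruleun}, the bounds on $\beta$ from instability and Le~Potier vanishing, the elimination of $(4,-1,-6)$ and $(4,-1,-8)$ via $c_2(E(\beta))>0$, and the final contradiction by intersecting an effective divisor in $\left|\cO_{\P(E)}(1)(\beta')\right|$ with the $n$-th power of the ample class $c_1(\cO_{\P(E)}(1))+m\,c_1(\pi^*\cO_{\P^n}(1))$. The only quibble is cosmetic: the intersection numbers $d$ are obtained by Chern-class computation in the Chow ring of $\P(E)$ (via the Grothendieck relation), not by Riemann--Roch, which in this part of the argument is not needed since nonemptiness of the linear system comes directly from the definition of $\beta$.
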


Adding this result to Proposition \ref{prop:rulest} we finally get:

\begin{thm}\label{thm:1Fano}
Let $E$ be a rank two $1$-Fano bundle on $\P^n$, $n\geq 4$.
Then $E$ splits as a sum of
line bundles.
\end{thm}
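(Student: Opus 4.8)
\textbf{Proof proposal for Theorem \ref{thm:1Fano}.}

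The plan is to split the argument according to the sign of the discriminant $\Delta(E)=c_1^2(E)-4c_2(E)$, exactly the dichotomy used to organize the two preceding subsections. After normalizing $E$ so that $c_1(E)=e\in\{0,-1\}$, every rank two $1$-Fano bundle on $\P^n$ with $n\ge 4$ falls into one of the two cases $\Delta(E)<0$ or $\Delta(E)\ge 0$, and in each case a splitting statement has already been proved.

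First I would invoke Proposition \ref{prop:rulest}: if $\Delta(E)<0$, then in fact this case does not occur for a $1$-Fano bundle on $\P^n$ with $n\ge 4$, since that proposition asserts $\Delta(E)\ge 0$. (The content there is the reduction, via the Schwarzenberger conditions of Lemma \ref{lem:schwarz} and the Schur-polynomial inequalities of Lemma \ref{lem:schur}, to the four numerical cases I--IV, followed by the intersection-number computations and, for the Horrocks--Mumford case on $\P^4$, the observation that $\cF_{HM}$ is not $1$-Fano.) Therefore we may assume $\Delta(E)\ge 0$. Now I would apply Proposition \ref{prop:ruleunst}, which states precisely that a rank two $1$-Fano bundle on $\P^n$, $n\ge 4$, with $\Delta(E)\ge 0$ splits as a sum of line bundles. (Its proof rests on Corollary \ref{cor:1Fanounst}, obtained from Proposition \ref{prop:unstable1} via the global generation of Lemma \ref{lem:gg}, which reduces to finitely many numerical cases in dimensions $4,5,6$; these are then eliminated using the Schwarzenberger restrictions of Lemma \ref{lem:ruleun} together with the effectivity-versus-ampleness intersection argument tabulated above.) Combining the two propositions gives the theorem.

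The only genuine obstacle is the bookkeeping already carried out in the two subsections: verifying that $h_\alpha(n)$ and $r_\alpha(n,e,c_2)$ are large enough for $r=1$ to trigger the splitting criteria of \cite{APW} in high dimension, and then clearing the residual low-dimensional numerical cases by the explicit Riemann--Roch and Schubert-type computations. At the level of Theorem \ref{thm:1Fano} itself, nothing remains: it is the conjunction of Proposition \ref{prop:rulest} and Proposition \ref{prop:ruleunst}, so the proof is simply to cite both and note that together they exhaust all rank two $1$-Fano bundles on $\P^n$ for $n\ge 4$.
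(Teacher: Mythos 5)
Your proposal is correct and coincides with the paper's own argument: the theorem is obtained exactly by combining Proposition \ref{prop:rulest} (which excludes $\Delta(E)<0$ for rank two $1$-Fano bundles on $\P^n$, $n\geq 4$) with Proposition \ref{prop:ruleunst} (which gives splitting when $\Delta(E)\geq 0$). Nothing further is needed.
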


\subsection{Rank two Fano bundles on Grassmannians}

Finally we apply the results of the previous subsections, 
together with the classification of rank two uniform
vector bundles on
Grassmannians given in Theorem \ref{thm:uniformgrass},
 to the study of rank two Fano bundles on Grassmannians:

\begin{thm}\label{thm:fanograss} Let $E$ be a normalized rank two Fano bundle on
the Grassmannian $\mathbb{G}(t,n+1)$, with $n \geq 6$. Denote by $\alpha$ the
class
modulo $2$ of $n+1+t-e$. If $t<\min\{r_\alpha(n,e),h(n)\}$, then either $E$
splits as a sum of line bundles or $t=1$ and $E$ is isomorphic to the twist of
the universal bundle $\cQ(-1)$.
\end{thm}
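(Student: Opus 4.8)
The plan is to reduce the classification of rank two Fano bundles on $G=\G(t,n+1)$ to two tools already developed in the paper: the classification of rank two uniform bundles on Grassmannians (Theorem~\ref{thm:uniformgrass}), and the splitting results for low-rank $1$-Fano bundles on projective spaces (Theorem~\ref{thm:1Fano}, or more precisely the numerical criterion of Corollary~\ref{cor:splitcrit}). The key intermediate claim is that, under the numerical hypothesis $t<\min\{r_\alpha(n,e),h(n)\}$, the restriction of $E$ to every maximal linear subspace $\P^n\subset G$ (both families: $\alpha$-spaces and $\beta$-spaces, or just the family of $\P^n$'s of the relevant dimension) splits as a sum of line bundles; this forces $E$ to be uniform, and then Theorem~\ref{thm:uniformgrass} finishes the job.

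First I would recall that if $E$ is a Fano bundle on $G$ then $G$ is Fano (\cite[Thm.~1.6]{SW}), and examine the restriction $E|_L$ to a linear subspace $L\cong\P^n\subset G$ in one of the two families of maximal linear spaces. The point is the observation made in the introduction: $E|_L$ need not be Fano on $\P^n$, but it is \emph{$r$-Fano} for a controlled value of $r$, namely $r=t$ (or whatever shift is dictated by $-K_G$ restricted to $L$ versus $-K_{\P^n}$; the discrepancy $-K_G|_L - (-K_{\P^n})$ is $\cO_L((n+1-t)\cdot\text{something})$, and the precise count of how many units of $\pi^*H$ one must add is exactly what produces the number $\alpha$ and the bound involving $t$). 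So I would first pin down the exact statement ``$E$ Fano on $G$ $\Rightarrow$ $E|_L$ is $t$-Fano on $\P^n$'' by computing $-K_{\P(E|_L)}$ in terms of $\cO_{\P(E)}(1)$ and the hyperplane class of $L$, using that $\P(E|_L)=\pi^{-1}(L)$ and adjunction for $L\subset G$.

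Next, with $E|_L$ known to be $t$-Fano on $\P^n$ and the hypothesis $t<\min\{r_\alpha(n,e),h(n)\}$ (which, after normalizing $E|_L$, is precisely the condition $r\le\min\{\lceil h_\alpha(n)\rceil-1,\, r_\alpha(n,e,c_2)\}$ appearing in Corollary~\ref{cor:splitcrit}, up to matching the definition of $\alpha$ and absorbing the discriminant cases via Propositions~\ref{prop:rulest} and \ref{prop:ruleunst}), I would invoke Corollary~\ref{cor:splitcrit}: $E|_L$ splits as a sum of line bundles. Doing this for every $L$ in both families of maximal linear spaces, rank two uniform bundles on $\P^n$ being determined by their Chern classes shows the splitting types agree across the family, so $E$ is uniform on $G$ in the sense required. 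Then Theorem~\ref{thm:uniformgrass} applies: $E$ is a direct sum of line bundles, or a twist of $\cQ$ or $\cQ^\vee$. A rank two twist of $\cQ$ forces $t+1=2$, i.e.\ $t=1$ and $\cQ$ has rank two; after normalization this is $\cQ(-1)$ (and $\cQ^\vee(a)$ is excluded, or identified with a twist of $\cQ(-1)$, since on $\G(1,n+1)$ one has $\cQ^\vee\cong\cQ(-1)$ only in special cases---I would check that $\P(\cQ^\vee)$ is indeed Fano while the other twist possibilities are ruled out by the Fano condition on $\P(E)$).

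The main obstacle I expect is the bookkeeping in the reduction step: matching the $r$-Fano index of the restriction $E|_L$ with the numerical bounds $r_\alpha(n,e,c_2)$ and $h_\alpha(n)$ from Corollary~\ref{cor:splitcrit}, including getting the parity constant $\alpha$ right and handling the borderline Chern-class values where Corollary~\ref{cor:splitcrit} does not directly apply but Propositions~\ref{prop:rulest} and~\ref{prop:ruleunst} do. A secondary but genuine subtlety is verifying that \emph{both} families of maximal linear spaces give splitting (so that the argument of Corollary~\ref{cor:uniformlinear}, or the case analysis preceding Claim~\ref{claim:TT}, is actually triggered rather than the $T_L$-type cases); this is where one must use that $t$ is small enough that $E|_L$ cannot be a twist of $T_{\P^n}$ or $\Omega_{\P^n}$, which for a \emph{rank two} bundle means $n\le 2$, automatically excluded since $n\ge 6$. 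Finally I would double-check the endgame identification of the exceptional bundle as $\cQ(-1)$ and confirm $\P(\cQ(-1))=\P(\cQ)$ is Fano (it is, being a flag manifold), so the exceptional case genuinely occurs.
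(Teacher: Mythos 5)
Your proposal follows essentially the same route as the paper: restrict $E$ to the maximal linear subspaces $\P^{n+1-t}\subset\G(t,n+1)$, note via adjunction that the restriction is $t$-Fano, apply the splitting results for $r$-Fano rank two bundles on projective space (Proposition~\ref{prop:stable} together with Theorem~\ref{thm:1Fano}, equivalently Corollary~\ref{cor:splitcrit}), deduce uniformity since every line lies in such a subspace and rank two split bundles are determined by Chern classes, and conclude with Theorem~\ref{thm:uniformgrass}. The only inessential difference is that you invoke both families of linear spaces and worry about $T_L$-type restrictions, whereas the paper needs only the single family of maximal-dimensional linear spaces to get uniformity.
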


\begin{proof} Since $E$ is Fano then the
$\Q$-vector bundle $$E\left(\frac{-\det{E}+(n+2)H}{2}\right)$$ is ample.

Denote by  $F:=E|_{\P^{n+1-t}}$ the restriction of $E$ to a linear space of
maximal dimension $\P^{n+1-t} \subset
\mathbb{G}(t,n+1)$. Just by
definition it holds that $F$ is $t$-Fano with respect to
$H=\cO_{\P^{n+1-t}}(1)$. Now we can apply Theorem \ref{thm:1Fano} and
\ref{prop:stable} to get that $F$ splits as a sum of line bundles. Since
$\rk{E}=2$ then the splitting type of the restriction of $E$ to any linear space
of maximal dimension is independent of the linear space. Hence $E$ is uniform,
being each line of $\G(t,n+1)$ contained in one of these linear spaces. We
therefore conclude by Theorem \ref{thm:uniformgrass}. \end{proof}

The same proof, in view of Theorem \ref{thm:1Fano}, shows that the
classification is complete for rank two Fano bundles on
$\mathbb{G}(1,n+1)$, $n \geq 4$:

\begin{cor}\label{cor:grass1} A Fano bundle $E$ on $\mathbb{G}(1,n+1)$, $n\geq
4$ either splits as a sum of line bundles or is isomorphic to a twist of the
universal bundle $\cQ$.
\end{cor}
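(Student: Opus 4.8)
The plan is to deduce Corollary \ref{cor:grass1} from Theorem \ref{thm:fanograss} by carefully checking that for $\G(1,n+1)$ with $n\geq 4$ the numerical hypotheses can be dispensed with, using the stronger Theorem \ref{thm:1Fano} in place of the combination of Theorem \ref{thm:1Fano} and Proposition \ref{prop:stable} that was needed for general Grassmannians. Concretely, the proof of Theorem \ref{thm:fanograss} proceeds by restricting $E$ to a maximal linear subspace $\P^{n+1-t}\subset\G(t,n+1)$, observing that this restriction $F$ is $t$-Fano, and then invoking splitting results for $t$-Fano bundles on projective space. For $t=1$ the restriction $F=E|_{\P^{n}}$ is $1$-Fano on $\P^n$ with $n\geq 4$, so Theorem \ref{thm:1Fano} applies directly and unconditionally: $F$ splits as a sum of line bundles, with no numerical side condition.

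So first I would spell out that $E$ Fano on $\G(1,n+1)$ implies, as in the proof of Theorem \ref{thm:fanograss}, that $E\big((-\det E+(n+2)H)/2\big)$ is ample, hence that the restriction $F$ of $E$ to any line $\P^{n}\subset\G(1,n+1)$ of maximal dimension (the $\alpha$- or $\beta$-spaces, which here are the $\P^n$'s in $\G(1,n+1)$) is $1$-Fano with respect to $\cO_{\P^n}(1)$. Next, since $n\geq 4$, Theorem \ref{thm:1Fano} gives that $F$ splits as a direct sum of two line bundles. Because $\rk E=2$, the splitting type on $\P^n$ is determined by the two integers $c_1$ and $c_2$ of $F$, which are restrictions of the Chern classes of $E$ and hence the same for every maximal linear space; thus $E$ is uniform with respect to the family of lines of $\G(1,n+1)$, each line being contained in such a $\P^n$. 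Finally, applying Theorem \ref{thm:uniformgrass} to the uniform rank two bundle $E$ on $\G(1,n+1)$ we conclude that $E$ is either a direct sum of line bundles or a twist of $\cQ$ or $\cQ^\vee$; but on $\G(1,n+1)$ the two universal bundles are dual up to twist — or more precisely $\cQ$ has rank $2$ and $\cQ^\vee$ is also covered by the statement — so the conclusion reads "$E$ splits or is a twist of $\cQ$" after absorbing a twist.

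The only genuinely delicate point is bookkeeping rather than mathematics: one must check that the exceptional conclusion "$t=1$ and $E\cong\cQ(-1)$" in Theorem \ref{thm:fanograss} does not actually require $n\geq 6$, i.e.\ that for $t=1$ and $n\in\{4,5\}$ nothing new appears. This is exactly what the stronger Theorem \ref{thm:1Fano} buys us: its range $n\geq 4$ already covers $\P^4$ and $\P^5$, so the restriction argument goes through verbatim, and Theorem \ref{thm:uniformgrass} (valid for all $\G(k,n)$) finishes the job in all dimensions $n\geq 4$. Hence I expect the main obstacle to be essentially absent here — the corollary is really a remark that the proof of Theorem \ref{thm:fanograss} needs only Theorem \ref{thm:1Fano} when $t=1$, and Theorem \ref{thm:1Fano} has the wider hypothesis $n\geq 4$ — so the write-up is short.

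\begin{proof}
Since $E$ is Fano, the $\Q$-vector bundle $E\big((-\det E+(n+2)H)/2\big)$ is ample, and therefore the restriction $F:=E|_{\P^{n}}$ of $E$ to any maximal linear subspace $\P^{n}\subset\G(1,n+1)$ is $1$-Fano with respect to $\cO_{\P^{n}}(1)$, exactly as in the proof of Theorem \ref{thm:fanograss}. As $n\geq 4$, Theorem \ref{thm:1Fano} applies and $F$ splits as a sum of line bundles, with no restriction on the Chern classes. Because $\rk E=2$, the splitting type of $F$ on $\P^{n}$ is determined by $c_1(E)$ and $c_2(E)$, hence is the same for every maximal linear subspace; since every line of $\G(1,n+1)$ is contained in one such $\P^{n}$, the bundle $E$ is uniform with respect to the family of lines. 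Applying Theorem \ref{thm:uniformgrass} to the uniform rank two bundle $E$ on $\G(1,n+1)$, we conclude that $E$ is either a direct sum of line bundles or a twist of the universal quotient bundle $\cQ$ or of its dual. In the latter case, twisting appropriately, $E$ is a twist of $\cQ$, which is what we wanted.
\end{proof}
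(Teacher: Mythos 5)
Your proposal is correct and is essentially the paper's own argument: restrict to the maximal linear subspaces $\P^{n}\subset\G(1,n+1)$, observe the restriction is $1$-Fano, apply Theorem \ref{thm:1Fano} (valid already for $n\geq 4$) to get splitting and hence uniformity, and conclude with Theorem \ref{thm:uniformgrass}, noting that $\cQ^\vee\cong\cQ(-1)$ since $\rk\cQ=2$. Only a cosmetic slip: in $\G(1,n+1)$ the $\alpha$-spaces are $\P^2$'s, so the maximal linear subspaces you use are just the $\beta$-spaces $\P^n$, which indeed contain every line, so the argument is unaffected.
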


\begin{ex}
{\rm The null-correlation bundle on $\P^3$ is a Fano bundle, hence a $1$-Fano bundle,
which does not split as a sum of line bundles. It has $c_1=0$ and $c_2=1$.\\
The instanton bundles with
natural cohomology on $\P^3$ constructed in \cite{H} verify that $E(2)$ is
globally
generated and hence $E(3)$ is ample, i.e, they are $1$-Fano indecomposable
bundles.
They have $c_1=0$ and $c_2=2,3$.}
\end{ex}

\begin{center}\textbf{Acknowledgements}\end{center}
We thank the referee for many useful comments and suggestions.

%%%%%%%%%%%%%%%%%%%%%%%%%%%%%%%%%%%%%%%%%%%%%%%%%%%%%%%%%%%%
\bibliographystyle{amsalpha}

\end{document}